   \definecolor{Light}{gray}{.75}
   \theoremstyle{plain}
   \newtheorem{thm}{Theorem}
   \newtheorem{pro}[thm]{Proposition}
   \newtheorem{cor}[thm]{Corollary}
   \newtheorem{fact}[thm]{Fact}
   \newtheorem{try}[thm]{Triviality}
   \theoremstyle{definition}
   \theoremstyle{remark}
   \newtheorem{rem}[thm]{{\it Remark}}
   \def\tlabel{\label}
\DeclareMathOperator{\okr}{{\stackrel{{\scriptscriptstyle{\mathsf{def}}}}{=}}}
\DeclareMathOperator{\D}{d\!}
\DeclareMathOperator{\E}{e} \DeclareMathOperator{\I}{i}
\DeclareMathOperator{\RE}{\mathfrak{Re}}
\DeclareMathOperator{\IM}{\mathfrak{Im}}
\DeclareMathOperator{\lin}{lin}
\DeclareMathOperator{\clolin}{clolin}
   \DeclareMathOperator{\supp}{supp}
\def\dz#1{\mathcal D({#1})}
\def\funk#1#2#3{#1\colon#2\to#3}
\def\Ge{\geqslant}
\def\gw{^*}
\def\is#1#2{\langle#1,#2\rangle}
\def\jd#1{\mathcal N(#1)}
\def\Le{\leqslant}
\def\liczp#1{{${#1}^{\text {\rm o}}$}}
\def\nic{\varnothing}
\def\npo#1{^{(#1)}}
\def\ob#1{\mathcal R(#1)}
\def\res#1{|_{#1}}
\def\rres#1{\!\!\upharpoonright_{#1}}
\def\sbar#1{\,\overline{\!#1}}
\def\spek#1{{\rm sp}(#1)}
\def\ulamek#1#2{\mbox{\normalfont$\frac{#1}{#2}$}}
\def\zb#1#2{\{{#1}\colon\ {#2}\}}
\def\aac{\mathcal A}
\def\bbc{\mathcal B}
\def\ccc{\mathcal C}
\def\ddc{\mathcal D}
\def\hhc{\mathcal H}
\def\kkc{\mathcal K}
\def\llc{\mathcal L}
\def\ppc{\mathcal P}
\def\qqc{\mathcal Q}
\def\ssc{\mathcal S}
\def\uuc{\mathcal U}
\def\xxc{\mathcal X}
\def\ccb{\mathbb C}
\def\rrb{\mathbb R}
\def\zzb{\mathbb Z}
\def\bbs{\boldsymbol B}
\begin{document}

   \title[Normals, subnormals and an open question]{Normals, subnormals and an open question   }
   \author[F.H. Szafraniec]{Franciszek Hugon Szafraniec}
   \address{Instytut        Matematyki,         Uniwersytet
   Jagiello\'nski, ul. \L ojasiewicza 6, 30 348 Krak\'ow, Poland}
   \email{umszafra@cyf-kr.edu.pl}
   \thanks{This work was  supported  by the MNiSzW grant
N201 026 32/1350.}
   \subjclass{Primary 47A20, 47B15, 47B20, 47B25; Secondary 43A35,
44A60} \keywords{Bounded operator, unbounded operator, normal
operator, $*$--cyclic operator, cyclic operator, quasinormal
operator, subnormal operator, minimality of normal extension,
minimality of spectral type, semispectral measure, elementary
spectral measure, minimality of cyclic type, uniqueness of
extensions, invariant domain, complex moment problem, determinacy
of measure, ultradeterminacy of measure, vector of determinacy,
vector of ultradeterminacy, creation operator, quantum harmonic
oscillator, Segal--Bargmann space, analytic model, reproducing
kernel Hilbert space, integrability of RKHS}
   \dedicatory{To Zolt\'an Sebesty\'en for his 65th birthday
anniversary}
   \begin{abstract}
   An acute look at \underbar{basic} facts concerning
\underbar{unbounded} subnormal operators is taken here. These
operators have the richest structure and are the most exciting
among the whole family of beneficiaries of the normal ones.
Therefore, the latter must necessarily be taken into account as
the reference point for any exposition of subnormality. So as to
make the presentation more appealing a kind of comparative survey
of the bounded and unbounded case has been set forth.

\noindent This piece of writing serves rather as a practical guide
to this largely impenetrable territory than an exhausting report.
   \end{abstract}
   \maketitle


    We begin with bounded operators pointing out those well known
properties of normal and subnormal operators, which in unbounded
case become much more complex. Then we are going to show how the
situation looks like for their unbounded counterparts. The
distinguished example of the creation operator coming from the
quantum harmonic oscillator crowns the theory. Finally we discuss
an open question, one of those which seem to be pretty much
intriguing and hopefully inspiring.

   By an unbounded operator we mean a not necessarily bounded one,
nevertheless it is always considered to be densely defined, always
in a complex Hilbert space. If we want to emphasis an operator to
be everywhere defined we say it is \underbar{on}, otherwise we say
it is \underbar{in}. Unconventionally though suggestively,
$\bbs(\hhc)$ denotes all the bounded operators on $\hhc$. If $A$
is an operator, then $\dz A$, $\jd A$ and $\ob A$ stands for its
domain, kernel(null space) and range respectively; if $A$ is
closable, its closure is denoted by $\sbar A$.

 Let us mention some books where unbounded normal operators are
treated: they are \cite{bir}, \cite[Chapter XV, Section 12]{die},
\cite{nagy42} and \cite{wei}. To bounded subnormal operators the
book \cite{con} is totally devoted.

Despite the ambitious plan the topics presented here are a rather
selective. Also some of the arguments used in the proofs have to
be extended. The material, though still developing, is sizeable
enough to cover a large monograph; this is the project
\cite{book_sub} already in progress.

   \section*{The haven of tranquility of bounded operators}
   \subsection*{Foremost topics of normality}
   \subsubsection*{{Normal operators: around the definition}}

   An operator $N\in\bbs(\hhc)$ is said to be {\em normal} if it
commutes with its Hilbert space adjoint, that is if
   \begin{equation} \label{1.19.11}
   NN\gw=N\gw N.
   \end{equation}
This purely algebraic definition can be made spatial through a
standard argument: $N$ is normal if and only if
   \begin{equation} \label{1.12.11}
   \|Nf\|=\|N\gw f\|,\quad f\in\hhc.
   \end{equation}
   Let us notify the following.
   \begin{try} \tlabel{t1.27.12}
   $N$ is normal if and only if \eqref{1.12.11} holds for $f$'s
from a dense linear space\,\footnote{\;If we want to have a linear
space closed we always make it clear.} $\ddc$ only.
   \end{try}

   \subsubsection*{Spectral
representation} The most powerful tool for normal operators is its
spectral representation. Though different people may have
different understanding of it, everyone agrees that the most
appealing is its spatial version below.
   \begin{thm}[Spectral Theorem] \label{specth}An operator $N$
is normal if and only if it is a spectral integral of the identity
function on $\ccb$ with respect to a spectral measure $E$ on
$\ccb$. Such a spectral measure $E$ is uniquely determined and its
closed support coincides with the spectrum of $N$.
   \end{thm}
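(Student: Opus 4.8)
The plan is to treat the two implications separately, then to handle uniqueness and the identification of the support with the spectrum once the spectral measure is in hand.

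The sufficiency direction is routine: assuming $N=\int_\ccb z\,\D E(z)$ for a spectral measure $E$, one has $N\gw=\int_\ccb\overl z\,\D E(z)$, and the multiplicativity of the spectral integral for bounded Borel functions gives $NN\gw=\int_\ccb|z|^2\,\D E=N\gw N$, so \eqref{1.19.11} holds and $N$ is normal. For necessity I would build $E$ from a functional calculus. Let $\aac$ be the unital C*-subalgebra of $\bbs(\hhc)$ generated by $N$; normality makes $\aac$ commutative, so the Gelfand--Naimark theorem identifies $\aac$ isometrically and $*$-isomorphically with $C(\spek N)$, the identity function corresponding to $N$. This yields a continuous functional calculus $\Phi\colon C(\spek N)\to\aac$. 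Fixing $f,g\in\hhc$, the functional $\phi\mapsto\is{\Phi(\phi)f}{g}$ is bounded on $C(\spek N)$, so by the Riesz--Markov theorem it is represented by a unique regular complex Borel measure $\mu_{f,g}$. For a bounded Borel function $\psi$ the sesquilinear form $(f,g)\mapsto\int\psi\,\D\mu_{f,g}$ is bounded, hence of the form $\is{\Psi(\psi)f}{g}$ for a unique $\Psi(\psi)\in\bbs(\hhc)$; this extends $\Phi$ to all bounded Borel functions. Setting $E(\omega)\okr\Psi(\chi_\omega)$ for Borel $\omega\subseteq\ccb$ produces the candidate spectral measure, and $N=\Phi(z)=\int_\ccb z\,\D E(z)$.

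The technical heart, and the step I expect to be the main obstacle, is verifying that $\Psi$ is a $*$-homomorphism and that the resulting $E$ is genuinely a spectral measure: each $E(\omega)$ must be a self-adjoint idempotent, and $E$ must be finitely, then countably (in the strong operator topology), additive with $E(\ccb)=I$. Multiplicativity is first checked for continuous functions, where it is inherited from $\Phi$, and then bootstrapped to bounded Borel functions by a monotone-class / dominated-convergence argument applied to the scalar measures $\mu_{f,g}$; projection-valuedness of $E$ then follows from $\chi_\omega^2=\chi_\omega=\overl{\chi_\omega}$.

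Uniqueness is comparatively clean. If $E'$ is any spectral measure with $N=\int z\,\D E'$, then $N\gw=\int\overl z\,\D E'$ and, by multiplicativity, $p(N,N\gw)=\int p(z,\overl z)\,\D E'$ for every polynomial $p$ in $z$ and $\overl z$. The Stone--Weierstrass theorem makes such polynomials dense in $C(\spek N)$, so $\is{\Phi(\phi)f}{g}=\int\phi\,\D\is{E'(\cdot)f}{g}$ for all continuous $\phi$; the uniqueness clause of Riesz--Markov then forces $\is{E'(\cdot)f}{g}=\mu_{f,g}$ for all $f,g$, whence $E'=E$. Finally, to see $\supp E=\spek N$: if $\lambda\notin\supp E$ then $(z-\lambda)\odw$ is bounded on $\supp E$ and $\int(z-\lambda)\odw\,\D E$ inverts $N-\lambda$, so $\lambda\notin\spek N$; conversely, if $\lambda\in\supp E$ then $E\bigl(B(\lambda,\varepsilon)\bigr)\neq0$ for every $\varepsilon>0$, and choosing a unit vector in its range shows $\|(N-\lambda)f\|\Le\varepsilon$, so $N-\lambda$ is not bounded below and $\lambda\in\spek N$.
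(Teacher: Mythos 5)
The paper offers no proof of Theorem \ref{specth} at all: it is invoked as a classical result, with the reader directed to the textbooks \cite{bir}, \cite{die}, \cite{nagy42}, \cite{wei}, so there is no internal argument to measure your proposal against. On its own merits, your proof is the standard one --- continuous functional calculus via Gelfand--Naimark on the commutative C*-algebra generated by $N$, Riesz--Markov representation of $\phi\mapsto\is{\Phi(\phi)f}{g}$, extension to bounded Borel functions through bounded sesquilinear forms with a monotone-class bootstrap of multiplicativity, and projection-valuedness from $\chi_\omega^2=\chi_\omega=\overline{\chi_\omega}$ --- and it is essentially sound for the bounded setting in which this instance of the theorem is stated.

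Two points of order deserve attention. First, in the uniqueness step you apply Stone--Weierstrass on $C(\spek N)$ to force $\is{E'(\cdot)f}{g}=\mu_{f,g}$, but this presupposes that $E'$ is compactly supported with $\supp E'\subset\spek N$: compactness follows from boundedness of $N$ (a unit vector $f$ in the range of $E'(\{|z|>R\})$ gives $\|Nf\|\Ge R$, so $E'$ vanishes off the closed disc of radius $\|N\|$), and the inclusion $\supp E'\subset\spek N$ is precisely the resolvent half of your final support argument. So that support argument must be run for an \emph{arbitrary} representing spectral measure and placed \emph{before} uniqueness, not after; as you have ordered things, the uniqueness paragraph silently uses a fact you only establish later. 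Second, in the sufficiency direction the identity function is unbounded on $\ccb$, so $\int_\ccb z\,\D E$ is a priori an unbounded normal operator; within this bounded-operator section you should either note that boundedness of $N$ forces $E$ to be compactly supported before invoking multiplicativity, or bypass multiplicativity altogether and verify normality spatially via $\|Nf\|^2=\int_\ccb|z|^2\is{E(\D z)f}{f}=\|N\gw f\|^2$, which is exactly condition \eqref{1.12.11}. Finally, be aware that the same statement reappears as Theorem \ref{t1.25.11} in the unbounded part of the paper, where your C*-algebra argument does not apply directly (one typically passes through the bounded transform or a Cayley-type reduction); your proof covers the bounded statement under review, which is its intended scope.
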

%

   \subsubsection*{From spectral representation to $\llc^2$--model}

   What is sometimes meant by spectral theorem, tailored to the
simplest possible situation and as such pretty often satisfactory
in use, is the following.
   \begin{cor} \tlabel{t2.19.11}
  If an operator $N$ is normal and $*$--cyclic, then there is
positive measure $\mu$ on $\spek N$ such that $N$ is unitarily
equivalent to the operator $M_Z$ of multiplication by the
independent variable on $\llc^2(\mu)$.
   \end{cor}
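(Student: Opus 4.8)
The plan is to build the multiplication model by combining the Spectral Theorem (Theorem~\ref{specth}) with the $*$--cyclicity hypothesis. Let $e$ be a $*$--cyclic vector for $N$, so that the smallest subspace containing $e$ and invariant under both $N$ and $N\gw$ is all of $\hhc$. By Theorem~\ref{specth} there is a unique spectral measure $E$ on $\ccb$, supported on $\spek N$, with $N=\int_{\ccb}z\,\D E(z)$. The natural candidate for $\mu$ is the scalar measure $\mu(\Delta)\okr\is{E(\Delta)e}{e}$ defined on Borel subsets $\Delta\subseteq\spek N$; this is a finite positive measure because $E$ is a spectral measure and $\mu(\spek N)=\|e\|^2$.

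First I would define the intended unitary $U\colon\Ld\mu\to\hhc$ on polynomials in $z$ and $\bar z$ by sending such a polynomial $p(z,\bar z)$ to $p(N,N\gw)e$, which after the obvious identification reads $U\colon p\mapsto\bigl(\int p\,\D E\bigr)e$. The key isometry computation is
\begin{equation*}
\Big\|\int p\,\D E\,e\Big\|^2=\is{\big(\textstyle\int|p|^2\,\D E\big)e}{e}=\int|p|^2\,\D\mu=\|p\|_{\Ld\mu}^2,
\end{equation*}
which uses the multiplicativity $E(\Delta)E(\Delta')=E(\Delta\cap\Delta')$ of the spectral measure together with the definition of $\mu$. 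This shows $U$ is a well-defined isometry from the polynomials (a subset of $\Ld\mu$) onto $\lin\zb{p(N,N\gw)e}{p}$. The $*$--cyclicity of $e$ guarantees that the latter linear span is dense in $\hhc$, so $U$ extends to a unitary from the closure of the polynomials in $\Ld\mu$ onto $\hhc$.

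Next I would establish the intertwining $UM_Z=NU$. On polynomials this is the identity $U(zp)=\int zp\,\D E\,e=N\int p\,\D E\,e=NUp$, and it passes to the closure by continuity of $U$ and boundedness of both $N$ and $M_Z$ (note $M_Z$ is bounded on $\Ld\mu$ precisely because $\mu$ is supported on the bounded set $\spek N$). Finally I must argue that the polynomials in $z$ and $\bar z$ are dense in $\Ld\mu$, so that $U$ is genuinely unitary onto all of $\hhc$; since $\mu$ is a compactly supported measure on $\ccb$, this is the classical Stone--Weierstrass density of the $*$--algebra of polynomials in $\Le^2$ of a compactly supported measure.

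The main obstacle I expect is the bookkeeping around density and well-definedness of $U$ rather than any single hard inequality. One must check that $U$ respects the equivalence-class structure of $\Ld\mu$ (two polynomials equal $\mu$--a.e.\ must map to the same vector, which follows from the isometry since their difference has zero $\Ld\mu$--norm), and one must invoke $*$--cyclicity at exactly the right point to get surjectivity. The computation that $U$ intertwines $M_Z$ with $N$ is then routine, and the conclusion that $N$ is unitarily equivalent to $M_Z$ on $\Ld\mu$ follows.
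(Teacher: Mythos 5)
Your proof is correct and is exactly the standard derivation that the paper implicitly relies on: it states this as an unproved corollary of the Spectral Theorem, and the intended argument is precisely to set $\mu=\is{E(\cdot)e}{e}$, map $p(z,\bar z)\mapsto p(N,N\gw)e$, and use multiplicativity of $E$, $*$--cyclicity, and density of polynomials in $\llc^2$ of a compactly supported measure. No gaps; your attention to well-definedness modulo $\mu$--null sets and to where $*$--cyclicity enters is exactly the right bookkeeping.
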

   $N$ is {\em $*$--cyclic} means\,\footnote{\label{futa1}\;This
definition makes sense for any operator as long as the involved
monomials are kept to be ordered like $N\gw{}^kN^l$.}
 \underbar{here} that there is a vector $e\in\hhc$ (called a
$*$--cyclic vector of $N$) such that the linear space
   \begin{equation} \label{5.19.11}
   \colorbox{Light}{$\zb{p(N\gw,N)e}{p\in\ccb[Z,\sbar Z]}$}
   \end{equation}
   is \colorbox{Light}{dense} in $\hhc$; this notion appears as
one of the very sensitive when passing to unbounded operators.

   The converse to Corollary \ref{t2.19.11} is trivial. We state
it here because of the further role it is going to play.
   \begin{fact} \label{t3.19.11}
   Suppose $\mu$ is compactly supported positive
measure\,\footnote{\;We call a measure positive if it takes
non-negative values.} on $\ccb$. Then the operator $M_Z$ of
multiplication by the independent variable is normal and
$*$-cyclic with the cyclic vector $e=1$.
   \end{fact}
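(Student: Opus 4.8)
The plan is to handle normality and $*$-cyclicity separately; the former is a one-line verification, the latter carries the content. Note first that for $e=1$ to qualify as a vector of $\llc^2(\mu)$ one needs $\mu(\ccb)=\int 1\,\D\mu<\infty$, so I take $\mu$ to be a finite positive Borel measure, and I put $K\okr\supp\mu$, compact by hypothesis. Since $|z|\Le\sup_{w\in K}|w|<\infty$ for $\mu$-almost every $z$, multiplication by $z$ is bounded on $\llc^2(\mu)$, that is $M_Z\in\bbs(\llc^2(\mu))$.

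A direct computation, $\is{M_Zf}{g}=\int zf\,\overl g\,\D\mu=\int f\,\overline{\overl z\,g}\,\D\mu=\is{f}{M_{\overl Z}g}$, identifies the adjoint as $M_Z\gw=M_{\overl Z}$, multiplication by $\overl z$. Normality is then immediate from the spatial criterion \eqref{1.12.11}, since
\[
\|M_Zf\|^2=\int|z|^2|f|^2\,\D\mu=\int|\overl z|^2|f|^2\,\D\mu=\|M_Z\gw f\|^2,\qquad f\in\llc^2(\mu);
\]
equivalently $M_ZM_Z\gw=M_{|Z|^2}=M_Z\gw M_Z$, which is \eqref{1.19.11}.

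For $*$-cyclicity I unwind \eqref{5.19.11}. With the monomials ordered as $M_Z\gw{}^kM_Z^l$ (the convention fixed in the definition of $*$-cyclicity), applying such a monomial to the constant function $1$ gives $\overl z^{\,k}z^{\,l}$, so the space $\zb{p(M_Z\gw,M_Z)1}{p\in\ccb[Z,\sbar Z]}$ is precisely the algebra of all polynomials in $z$ and $\overl z$, regarded as elements of $\llc^2(\mu)$. The claim thus reduces to the density of these polynomials in $\llc^2(\mu)$. Here the key step is the complex Stone--Weierstrass theorem: the polynomials in $z,\overl z$ form a subalgebra of $C(K)$ that contains the constants, separates the points of $K$ (the coordinate $z$ alone does), and is closed under complex conjugation; hence it is uniformly dense in $C(K)$.

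It remains to pass from uniform to $\llc^2(\mu)$-approximation. As $\mu$ is a finite Borel measure on the compact set $K$, $C(K)$ is dense in $\llc^2(\mu)$ (by regularity of $\mu$ and Lusin's theorem), while the estimate $\|h\|_{\llc^2(\mu)}\Le\mu(\ccb)^{1/2}\|h\|_\infty$ shows that uniform convergence forces $\llc^2(\mu)$-convergence. Composing the two density statements yields the density of the polynomials in $z,\overl z$, that is, $*$-cyclicity of $M_Z$ with cyclic vector $e=1$. The only genuine obstacle is this last density step---reducing $\llc^2$-approximation to uniform approximation and invoking Stone--Weierstrass---since the identification of the adjoint and the normality are purely formal.
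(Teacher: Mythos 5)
Your proof is correct and is essentially the argument the paper intends: the Fact is stated there without proof (the author calls it trivial), and the routine verification is exactly yours---$M_Z$ is a bounded multiplication operator with adjoint $M_{\sbar Z}$, hence normal, while $*$-cyclicity of $e=1$ reduces, via Stone--Weierstrass applied to the $*$-subalgebra of polynomials in $z,\sbar z$ inside $C(\supp\mu)$ and the estimate $\|h\|_{\llc^2(\mu)}\Le\mu(\ccb)^{1/2}\|h\|_\infty$, to the density of $C(\supp\mu)$ in $\llc^2(\mu)$. Your preliminary observation that $\mu$ must be finite is also consistent with the paper's standing convention that all measures have finite moments, $\int_\ccb|z|^{2n}\,\mu(\D z)<\infty$ for all $n\Ge0$, which for $n=0$ gives precisely $\mu(\ccb)<\infty$, so no hypothesis is missing.
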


   Some supplementary information is in what follows.
   \begin{cor} \label{t2.11.24}
   Let $N$ and $\mu$ be as in {\rm Corollary \ref{t2.19.11}}. Then
   \begin{equation*}
   \is{N^me}{N^ne}=\int_\ccb z^m\sbar z^n\mu(\D z),\quad
m,n=0,1,\dots
   \end{equation*}

   \end{cor}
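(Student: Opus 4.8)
The plan is to leverage Corollary \ref{t2.19.11}, which already supplies a positive measure $\mu$ on $\spek N$ and a unitary $U\colon\hhc\to\Ld\mu$ implementing the equivalence $UNU\odw=M_Z$. First I would record what the unitary does to the distinguished vector: since the $*$-cyclic vector $e$ corresponds, under the construction behind Corollary \ref{t2.19.11}, to the constant function $1\in\Ld\mu$ (this is exactly the normalization in Fact \ref{t3.19.11}, where the cyclic vector is $e=1$), I have $Ue=1$. The whole computation then reduces to pushing the inner product through $U$ and computing in the model space $\Ld\mu$.

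Concretely, I would write $\is{N^me}{N^ne}=\is{UN^me}{UN^ne}$ using unitarity, then intertwine: $UN^m=M_Z^mU$, so $UN^me=M_Z^m\,Ue=M_Z^m 1=z^m$, the monomial function. Likewise $UN^ne=\sbar z{}^{\,n}$ is \emph{not} quite right to assert directly---here one must be careful, because $M_Z$ is multiplication by $z$, so $M_Z^n 1=z^n$, giving the function $z\mapsto z^n$ in the left slot and $z\mapsto z^n$ again in the right slot before conjugation. The inner product in $\Ld\mu$ carries the complex conjugate on the second argument, so
\begin{equation*}
\is{N^me}{N^ne}=\is{z^m}{z^n}_{\Ld\mu}=\int_\ccb z^m\,\sbar z{}^{\,n}\,\mu(\D z),
\end{equation*}
which is exactly the claimed identity. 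The conjugation on $\sbar z{}^{\,n}$ thus comes entirely from the sesquilinearity convention of the $\Ld\mu$ inner product, not from any adjoint manipulation.

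The one genuinely substantive point---and the step I expect to be the main obstacle---is justifying $Ue=1$, since Corollary \ref{t2.19.11} as stated only asserts the \emph{existence} of $U$ and $\mu$, not the specific correspondence of the cyclic vector with the constant function. I would address this by recalling that the standard proof of Corollary \ref{t2.19.11} produces $\mu$ as the scalar spectral measure $\mu(\,\cdot\,)=\is{E(\,\cdot\,)e}{e}$ associated to $e$ via the spectral measure $E$ of Theorem \ref{specth}, and the unitary is precisely the one sending $p(N\gw,N)e\mapsto p(\sbar z,z)$ on the dense set \eqref{5.19.11}; taking $p\equiv1$ gives $e\mapsto 1$. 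Once this identification is in place, the remainder is the routine intertwining and integration above, and everything extends painlessly to all $m,n=0,1,\dots$ since the monomials $N^me$ lie in the relevant dense linear manifold.
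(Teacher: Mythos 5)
Your argument is correct, and it isolates the right crux: the corollary is only true for the particular $\mu$ produced by the construction behind Corollary \ref{t2.19.11}, namely $\mu(\,\cdot\,)=\is{E(\,\cdot\,)e}{e}$, with the unitary normalized so that $Ue=1$ as in Fact \ref{t3.19.11}. The paper offers no proof at all --- the statement is presented as ``supplementary information'' read off from the model --- so there is nothing to diverge from; your route through the $\Ld\mu$ model, intertwining $UN^m=M_Z^mU$ and letting the conjugate $\sbar z^{\,n}$ come from the sesquilinear inner product, is exactly the intended reading. For what it is worth, one can bypass the model entirely and get the identity in one line from the spectral theorem: $\is{N^me}{N^ne}=\is{N\gw{}^nN^me}{e}=\int_\ccb z^m\sbar z^{\,n}\is{E(\D z)e}{e}$, which also makes transparent why this is the formula that survives (with care) into the unbounded setting.
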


   \subsubsection*{The spectrum}  For the spectrum of a normal
operator $N$ we have
   \begin{equation*}
   \spek{N}={\rm sp_{ap}}(N)
   \end{equation*}
   where the right hand side stands for the approximate point
spectrum. As the adjoint $N\gw$ is normal as well the same refers
to it; the apparent equality $\spek N=\overline{\spek{N\gw}}$ is
applicable here.

   \subsection*{The finest points of subnormal operators}
   Here we would like to itemize the topics, which are well known
in the theory of bounded subnormal operators (cf. \cite{con}), and
which we are going to juxtapose with those for unbounded
operators.
   \subsubsection*{Normal dilations and subnormality}
   Given $A\in\bbs(\hhc)$, a normal operator $N\in\bbs(\kkc)$,
$\kkc$ contains isometrically $\hhc$, is said to be a ({\em
power}) {\em dilation} of $A$ if
   \begin{equation}\label{1.28.12}
   A^nf=PN^nf, \quad f\in\hhc,\quad n=0,1,\ldots
   \end{equation}
   with $P$ being the orthogonal projection of $\kkc$ onto $\hhc$;
if $N$ is a dilation of $A$ then so is $N\gw$ for $A\gw$.

   If for $S\in\bbs(\hhc)$ there is $N$ normal in $\kkc$ such that
instead of \eqref{1.28.12} we have
   \begin{equation} \label{2.28.12}
   Sf=Nf, \quad f\in\hhc,
   \end{equation}
   then we say that $S$ is {\em subnormal}. If $S$ is subnormal
and $N$ is its normal extension then $N\gw$ is a normal dilation
of $S\gw$. In addition to this we have, cf. \cite[\S 5]{nagy}
   \begin{pro}\tlabel{t1.30.12}
   The following conditions are equivalent:
   \begin{enumerate}
   \item[(a)] $B$ is an extension of $A$; \item[(b)] $B$ is a
dilation of $A$ and $B\gw B$ is a dilation of $A\gw A$; \item[(c)]
 $B\gw{}^i B^j$ is a dilation of $A\gw{}^i A^j$ for any
$i,j=0,1,\dots$.
   \end{enumerate}
   \end{pro}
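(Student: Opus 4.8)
The plan is to run the cycle (a) $\Rightarrow$ (c) $\Rightarrow$ (b) $\Rightarrow$ (a), treating the two ``moment'' conditions as assertions about the compression $f\mapsto PB\gw{}^iB^jf$ of words in $B$ and $B\gw$. I would identify $A$ with the compression $PB|_\hhc$ and read ``$B\gw{}^iB^j$ is a dilation of $A\gw{}^iA^j$'' as the single relation $A\gw{}^iA^jf=PB\gw{}^iB^jf$ for $f\in\hhc$; for $i=0$ this is exactly the dilation \eqref{1.28.12} of $A$ by $B$, and for $i=j=1$ it reads $\|Af\|=\|Bf\|$ on $\hhc$.

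For (a) $\Rightarrow$ (c) the pivot is the commutation identity $PB\gw=A\gw P$ on all of $\kkc$. First I would observe that invariance of $\hhc$ under $B$ is the same as invariance of $\hhc^\perp$ under $B\gw$, hence $PB\gw(I-P)=0$; splitting $x=Px+(I-P)x$ then gives $PB\gw=PB\gw P=A\gw P$, where $A\gw=PB\gw|_\hhc$ is merely the adjoint of the compression and needs no hypothesis. Iterating produces $PB\gw{}^i=A\gw{}^iP$ for every $i$. Since invariance also gives $B^jf=A^jf\in\hhc$, and so $PB^jf=A^jf$, I would conclude $PB\gw{}^iB^jf=A\gw{}^iPB^jf=A\gw{}^iA^jf$, which is (c).

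The step (c) $\Rightarrow$ (b) is pure specialization: $i=0$ with $j$ free gives $A^jf=PB^jf$, i.e. $B$ dilates $A$, while $i=j=1$ gives $A\gw Af=PB\gw Bf$. The real content is (b) $\Rightarrow$ (a), which I would settle by a Pythagoras argument. The first power of ``$B$ dilates $A$'' gives $Af=PBf$, so $\|Af\|=\|PBf\|$; the second-order datum gives $\is{A\gw Af}{f}=\is{B\gw Bf}{f}$, that is $\|Af\|=\|Bf\|$, for $f\in\hhc$. Hence $\|PBf\|=\|Bf\|$, and the orthogonal splitting $\|Bf\|^2=\|PBf\|^2+\|(I-P)Bf\|^2$ forces $(I-P)Bf=0$. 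Thus $Bf=Af\in\hhc$, so $\hhc$ is invariant and $B|_\hhc=A$, i.e. (a).

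The main obstacle is precisely (b) $\Rightarrow$ (a): the insight that matching only the first power together with the single second-order norm datum already forces the subspace to be invariant, everything else being the bookkeeping identity $PB\gw{}^i=A\gw{}^iP$ or a one-line specialization. A secondary point to fix at the outset is the reading of ``dilation'' for the words $B\gw{}^iB^j$ as a single compression relation rather than a relation between all powers: the latter is already false for $B\gw B$ under (a), as a two-by-two example shows.
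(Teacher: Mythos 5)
The paper gives no proof of this proposition at all --- it simply refers the reader to Sz.--Nagy's appendix (\cite{nagy}, \S 5) --- so there is nothing internal to compare against. Your argument is correct and is essentially the classical one that the cited source uses: the cycle (a)$\Rightarrow$(c)$\Rightarrow$(b)$\Rightarrow$(a), with the commutation identity $PB^*=A^*P$ (coming from invariance of $\mathcal H^\perp$ under $B^*$) driving the first implication and the Pythagoras step $\|PBf\|=\|Bf\|\Rightarrow (I-P)Bf=0$ supplying the only nontrivial content in the last. Your preliminary remark on how to read ``$B^*B$ is a dilation of $A^*A$'' is not pedantry but necessary: under the literal power-dilation reading of \eqref{1.28.12} applied to the operator $B^*B$, condition (b) would already fail for a $2\times 2$ upper-triangular extension, so the single compression relation $A^*{}^iA^jf=PB^*{}^iB^jf$ is the only reading under which the equivalence can hold, and it is the one under which your proof goes through.
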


   Another way of writing \eqref{2.28.12}, both illustrative and
precise, is
   \begin{equation} \label{3.28.12}
   S\subset N;
   \end{equation}
   use of $\subset$ suggests the graph connotation.

   \subsubsection*{Halmos' positive definiteness and Bram's characterization
of subnormality}
   It is an immediate consequence of normality of $N$ in
\eqref{2.28.12} that a subnormal operator $S\in\bbs(\hhc)$ must
necessarily satisfy a kind of {\em positive definiteness}
condition introduced by Halmos in \cite{halmos}:
   \begin{equation*}
   \sum_{m,n}\is{S^mf_n}{S^nf_m}\Ge 0, \quad \text{for any finite
sequence $(f_k)_k\subset\hhc$}. 
   \end{equation*}

   \begin{thm} \tlabel{t2.28.12}
   $S\in\bbs(\hhc)$ is subnormal if and only if it satisfies the
 positive definiteness condition and
   \begin{equation} \label{1.3.1}
\sum_{m,n}\is{S^{m+1}f_n}{S^{n+1}f_m} \Le
C\sum_{m,n}\is{S^mf_n}{S^nf_m} , \quad \text{for any finite
sequence $(f_k)_k\subset\hhc$}
   \end{equation}
   with some $C\Ge0$.
   \end{thm}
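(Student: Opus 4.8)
The statement is Bram's theorem, and its two directions are of very different character.

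For the necessity I would simply exploit a bounded normal extension $N\in\bbs(\kkc)$ of $S$. The computation that drives everything is that, $N$ being normal, the commutation $N\gw{}^nN^m=N^mN\gw{}^n$ lets one rewrite, for any finite family $(f_k)_k\subset\hhc$,
\begin{equation*}
\sum_{m,n}\is{S^mf_n}{S^nf_m}=\sum_{m,n}\is{N^mf_n}{N^nf_m}=\Bigl\|\sum_nN\gw{}^nf_n\Bigr\|^2 .
\end{equation*}
Positive definiteness is then immediate. For \eqref{1.3.1} the same manipulation gives $\sum_{m,n}\is{S^{m+1}f_n}{S^{n+1}f_m}=\|N\gw g\|^2$ with $g=\sum_nN\gw{}^nf_n$, so the inequality holds with $C=\|N\|^2$ because $\|N\gw g\|^2\Le\|N\|^2\|g\|^2$.

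The substance is the sufficiency, which I would prove by an explicit Kolmogorov/GNS--type construction of a normal extension. Let $\ddc_0$ be the linear space of finitely supported sequences $(f_n)_{n\Ge0}$ with $f_n\in\hhc$, each such sequence standing in for the formal vector $\sum_nN\gw{}^nf_n$ that the sought extension should produce. Accordingly I define the sesquilinear form
\begin{equation*}
[(f_m),(g_n)]\okr\sum_{m,n}\is{S^nf_m}{S^mg_n},
\end{equation*}
which is exactly $\is{\sum_mN\gw{}^mf_m}{\sum_nN\gw{}^ng_n}$ computed as above. After relabelling the summation indices its diagonal values coincide with the left-hand side of Halmos' condition, so that hypothesis says precisely that $[\,\cdot\,,\,\cdot\,]$ is positive semidefinite. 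Passing to the quotient by its kernel and completing yields a Hilbert space $\kkc$ into which $\hhc$ embeds isometrically through $f\mapsto(f,0,0,\dots)$, since $[(f,0,\dots),(g,0,\dots)]=\is fg$.

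It then remains to manufacture the normal operator. On $\ddc_0$ let $V$ be the shift $(f_0,f_1,\dots)\mapsto(0,f_0,f_1,\dots)$ and let $W$ act componentwise by $S$, that is $(f_n)\mapsto(Sf_n)$; these model $N\gw$ and $N$ respectively. A short calculation shows that \eqref{1.3.1} is nothing but $[Vx,Vx]\Le C[x,x]$, so $V$ annihilates the kernel and descends to a bounded operator on $\kkc$. A second direct computation gives $[Wx,y]=[x,Vy]$, whence $W=V\gw$ is bounded as well; and $V$ and $W$ commute on $\ddc_0$ simply because a shift commutes with a componentwise action, so $W$ is normal. Since $W$ sends the copy $(f,0,\dots)$ of $f$ to $(Sf,0,\dots)$, it extends $S$, and $N\okr W$ is the desired bounded normal extension. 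The one genuinely inventive step is guessing the form $[\,\cdot\,,\,\cdot\,]$ together with the pair $V,W$; thereafter the argument is bookkeeping. The two hypotheses enter at exactly one point each, in a clean division of labour: Halmos' positive definiteness is what makes $\kkc$ a Hilbert space at all, while \eqref{1.3.1} is precisely the boundedness of the candidate adjoint $N\gw$. The point deserving most care is checking that $V$ and $W$ respect the kernel and so pass to the completion --- but this is handed to us by the two inequalities, which is the whole elegance of Bram's criterion.
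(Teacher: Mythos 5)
Your argument is correct. The paper itself offers no proof of this theorem --- it only states it, pointing to Halmos \cite{halmos}, Sz.-Nagy \cite{nagy} and Bram \cite{bra} --- and what you have written is precisely the classical Sz.-Nagy--type construction from those references: the necessity via $\sum_{m,n}\is{S^mf_n}{S^nf_m}=\|\sum_nN\gw{}^nf_n\|^2$ and $C=\|N\|^2$, and the sufficiency via the positive form on finitely supported $\hhc$-valued sequences with the shift $V$ playing $N\gw$ and the componentwise $W$ playing $N$. All the checks go through; the only step you compress is that $W$, not just $V$, must be shown to respect the kernel of the form before descending to the quotient, but this follows at once from the adjoint relation you established: $[Wx,Wx]=[x,VWx]\Le[x,x]^{1/2}[VWx,VWx]^{1/2}\Le\sqrt C\,[x,x]^{1/2}[Wx,Wx]^{1/2}$, whence $[Wx,Wx]\Le C[x,x]$. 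It would be worth writing that line out; otherwise the division of labour you describe --- ({\tt PD}) builds the space, \eqref{1.3.1} bounds the candidate adjoint --- is exactly the right way to see the theorem, and also explains the paper's later remark that \eqref{1.3.1} resurfaces in the unbounded theory as a growth condition.
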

   Bram's result\,\footnote{\;A short replacement for Bram's main
argument concerning redundancy of \eqref{1.3.1} can be found in
\cite{pams}. The argument from \cite{pams} is present in \cite[p.
509]{nagy_rus}} \cite{bra} says the boundedness condition
\eqref{1.3.1} in Halmos' Theorem \ref{t2.28.12}\,\footnote{\;For
another proof of Halmos' theorem look at \cite{nagy}} is
 superfluous. It turns out that the boundedness condition
\eqref{1.3.1} comes back in the unbounded case under some forms of
growth conditions.

   Another characterization is in \cite{ando}; it is interesting
because it provides with a matricial construction of the extension
space independent of $S$. In principal it does lead to minimal
extensions, cf. Proposition \ref{t4.28.12}.

   \subsubsection*{Minimality and uniqueness of extensions}
   For $S$ subnormal and its normal extension $N$ let us take into
consideration the following three situations.
   \begin{enumerate}
   \item[(M${}_1$)] \label{min} If $\hhc\subset\kkc_1\subset\kkc$
and $N\rres{\kkc_1}$ turns out to be normal then either
$\kkc_1=\hhc$ or $\kkc_1=\kkc$.
   \end{enumerate}
   For $E$ being the spectral measure of $N$ and $\ddc$ a linear
subspace of $\hhc$ set
   \begin{align} \label{4.28.12}
   \ssc_\ddc&\okr\clolin\zb{E(\sigma )f}{\text{$\sigma$ Borel
subset of $\ccb$, $f\in\ddc$}}
   \\ \label{5.28.12}
   \ccc_\ddc&\okr\lin\zb{N\gw{}^mN^nf}{f\in\ddc,\,m,n=0,1,\ldots}.
   \end{align}
   \begin{enumerate}
   \item[(M${}_2$)] $\ssc_\hhc$ is $\kkc$. \item[(M${}_3$)] The
closure of $\ccc_\hhc$ is $\kkc$.
   \end{enumerate}
   The standard fact of the theory says the conditions (M${}_1$),
(M${}_2$) and (M${}_3$) are equivalent. If this happens we speak
of {\em minimality} of $N$. Notice minimal normal extensions
always exist, both (M${}_2$) and (M${}_3$) provide with an
algorithm to determine them. Moreover,
   \begin{pro}\tlabel{t4.28.12}
   Two minimal normal extensions of a subnormal $S$ are
$\hhc$--equivalent, that is there is a unitary similarity between
them which remains identity on the space $\hhc$.
   \end{pro}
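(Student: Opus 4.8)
The plan is to establish the $\hhc$--equivalence by constructing the intertwining unitary directly from the $*$--cyclic-type structure that minimality provides, using condition (M${}_3$) as the working description of each minimal extension. So let $N_1$ on $\kkc_1$ and $N_2$ on $\kkc_2$ be two minimal normal extensions of $S$, with spectral measures $E_1$, $E_2$. By (M${}_3$) the closure of $\ccc_\hhc$ computed inside each $\kkc_j$ is all of $\kkc_j$; that is, the vectors $N_j\gw{}^mN_j^nf$ with $f\in\hhc$ and $m,n\Ge0$ span a dense subspace. The natural candidate for the unitary $U\colon\kkc_1\to\kkc_2$ is the one that fixes $\hhc$ and sends $N_1\gw{}^mN_1^nf\mapsto N_2\gw{}^mN_2^nf$, and the whole proof reduces to showing this prescription is well defined, isometric, surjective, and genuinely intertwines $N_1$ with $N_2$.

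First I would verify that all the mixed inner products
\begin{equation*}
\is{N_j\gw{}^mN_j^nf}{N_j\gw{}^pN_j^qg}
\end{equation*}
are independent of $j$ for $f,g\in\hhc$. The key observation is that because $N_j$ is normal and extends $S$, every such vector can be rearranged via the commutation $N_jN_j\gw=N_j\gw N_j$ into the ordered form $N_j^aN_j\gw{}^bf$, and then the inner product collapses, using $N_jf=Sf$ for $f\in\hhc$ together with $\is{N_jx}{y}=\is{x}{N_j\gw y}$, to an expression involving only powers of $S$ and $S\gw$ acting between elements of $\hhc$. Since $S$ is the same operator for both extensions, these numbers coincide. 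This simultaneously shows the map is well defined on the dense linear span $\ccc_\hhc$ (equal inner products force equal kernels) and that it preserves inner products, hence extends by continuity to an isometry $U$ of $\kkc_1$ onto $\kkc_2$, surjectivity following because the image is dense by (M${}_3$) for $N_2$.

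It remains to check that $U$ restricts to the identity on $\hhc$ and that $UN_1=N_2U$. The first is immediate by taking $m=n=0$ in the defining formula. For the intertwining, I would apply $U$ to a generating vector and use that $N_1$ simply shifts the exponent: $N_1(N_1\gw{}^mN_1^nf)=N_1\gw{}^mN_1^{n+1}f$ on the ordered representatives, whose image under $U$ is $N_2\gw{}^mN_2^{n+1}f=N_2(N_2\gw{}^mN_2^nf)=N_2U(N_1\gw{}^mN_1^nf)$; since these vectors are dense and both operators are bounded, $UN_1=N_2U$ on all of $\kkc_1$.

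The main obstacle is the well-definedness and isometry step, i.e.\ genuinely proving that the mixed inner products depend only on $S$ and not on the chosen extension. The delicate point is the bookkeeping of the normal reordering: one must be careful that the reduction to ordered monomials $N_j^aN_j\gw{}^b$ uses only normality and the extension property \eqref{2.28.12}, never any feature specific to $\kkc_j$. Once that reduction is carried out honestly the remaining verifications are formal, but getting the reordering identity clean---so that every surviving inner product lands between vectors of $\hhc$ where $S$ governs everything---is where the real content lies.
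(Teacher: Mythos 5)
Your proposal is correct and is essentially the argument the paper itself relies on: the paper states this bounded proposition as standard, but its proof of the unbounded analogue (Theorem \ref{t2.6.1}) is exactly your Gram--matrix matching, showing $\bigl\langle N_j\gw{}^mN_j^nf,N_j\gw{}^pN_j^qg\bigr\rangle=\bigl\langle S^{n+p}f,S^{m+q}g\bigr\rangle$ via normality and $N_j\res\hhc=S$, so the prescribed map is well defined, isometric, and extends to the desired unitary by the density in (M${}_3$). Your reordering step is sound in the bounded case since powers of $N_j$ and $N_j\gw$ commute everywhere, so nothing further is needed.
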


   \section*{The hazardous terrain of unbounded operators}
   All the operators from now on are \underbar{densely}
\underbar{defined}; if $A$ is such, $\dz A$ always stands for its
domain. The closure of $A$, if it is closable, is denoted by
$\sbar A$. If $\ddc$ is a linear subspace of $\dz A$ then
$A\res\ddc$ stands \label{abc} for the {\em restriction} of $A$ to
$\ddc$.
   \subsection*{Normal operators and their spectral
representation again}

   The definition of normality in unbounded case is much the same,
more precisely, a \underbar{closed} operator is said to be {\em
normal} if \eqref{1.19.11} holds\,\footnote{\;It is always tacitly
understood that the domain of a composition of two operators is
the maximal possible one. One has to notice that the adventure
with domains of unbounded operators already starts here.}.
However, it turns out that a version of \eqref{1.12.11} is more
easy-to-use: $N$ is normal if and only if
   \begin{gather}
\label{2.19.11} \begin{split}
   \dz N=\dz{N\gw}, \phantom{aaaaa}
   \\
    \|Nf\|=\|N\gw f\|,\quad f\in\dz N. \end{split}
   \end{gather}
    Now closeness of $N$ is implicit in \eqref{2.19.11}.
   \subsubsection*{The plain version of spectral theorem} As in the bounded case all the versions of
spectral representation are available. The spectral theorem,
Theorem \ref{specth}, is true as stated due to the vast
flexibility of the spectral integral. We are going to state it
here with more particulars enhancing some of them which are
pertinent to unbounded operators; of course, they are present in
the bounded case as well.
   \begin{thm}[Spectral Theorem, the extras included] \tlabel{t1.25.11}
   An operator $N$ is normal if and only if it is a spectral
integral of the identity function on $\ccb$ with respect to a
spectral measure $E$ on $\ccb$, that is
   \begin{enumerate}
   \item[\liczp 1] $\is{Nf}{g}=\int_{ \ccb} z \is{E(\D z)f}{g}$
for all $f\in\dz N$ and $g\in\hhc$.
   \end{enumerate}
Moreover, if this happens then
   \begin{enumerate}
   \item[\liczp 2] $\dz
N=\zb{f\in\hhc}{\int_{ \ccb}|z|^2\is{E(\D z)f}{f}<+\infty}$;
   \item[\liczp 3]  for every  Borel measurable non-negative
function $\phi$ on $ \ccb$ and $f\in\dz N$
   \begin{equation*}
   \int_{ \ccb} \phi(x)\is{E(\D z)Nf}{Nf}=\int_ \ccb\phi(x)
\,|z|^2 \is{E(\D z)f}f,
   \end{equation*}
   in particular,
   \begin{equation} \label{1.9.5.7}
   \|Nf\|^2=\int_{ X}|z|^2\is{E(\D z)f}{f},\quad f\in\dz N
   \end{equation}
  and
   \begin{equation}\label{4.26.11}
   \text{$E(\sigma)N\subset NE(\sigma)$ for all Borel sets
$\sigma$};
   \end{equation}
   \item[\liczp 4]
   the spectral measure $E$ is uniquely determined and its closed
support coincides with the spectrum of $N$.
   \end{enumerate}

   \end{thm}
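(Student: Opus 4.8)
The plan is to read off the equivalence \liczp1 and the uniqueness-and-support statement \liczp4 directly from Theorem~\ref{specth}, which the preceding paragraph declares valid in the unbounded case as well; thus only the ``extras'' \liczp2 and \liczp3 require argument, under the standing assumption that $N$ is represented as in \liczp1 by a spectral measure $E$. Throughout I would write $\mu_f(\sigma)\okr\is{E(\sigma)f}{f}$ and $\mu_{f,g}(\sigma)\okr\is{E(\sigma)f}{g}$ for the associated finite positive, respectively complex, Borel measures, and set $\sigma_n\okr\zb{z\in\ccb}{|z|\Le n}$, $E_n\okr E(\sigma_n)$, and $N_n\okr\int_{\sigma_n}z\,E(\D z)$, the last being bounded. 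Let $\tilde N$ denote the spectral integral of the identity taken on its \emph{maximal} domain $\ddc_0\okr\zb{f\in\hhc}{\int_\ccb|z|^2\,\mu_f(\D z)<+\infty}$; by the standard calculus of spectral integrals $\tilde N$ is closed and normal, with $\|\tilde Nf\|^2=\int|z|^2\mu_f(\D z)$ and $\is{\tilde Nf}{g}=\int z\,\mu_{f,g}(\D z)$. In these terms \liczp2 is precisely the assertion $\dz N=\ddc_0$, which I will obtain together with the sharper fact $N=\tilde N$.

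The step I expect to be the main obstacle is the inclusion $\dz N\subset\ddc_0$, since \liczp1 prescribes only the sesquilinear form of $N$ on the \emph{given} domain and says nothing a priori about how large that domain is. Here I would compute as follows. For $f\in\dz N$ the complex measure $\mu_{f,N_nf}$ has density $\sbar z\,\chi_{\sigma_n}$ with respect to $\mu_f$: indeed $E(\sigma)E(\tau)=E(\sigma\cap\tau)$ turns $\is{E(\tau)f}{N_nf}$ into $\int_{\sigma_n\cap\tau}\sbar z\,\mu_f(\D z)$. Applying \liczp1 with $g=N_nf$ therefore gives
\begin{equation*}
\int_{\sigma_n}|z|^2\,\mu_f(\D z)=\is{Nf}{N_nf}\Le\|Nf\|\Bigl(\int_{\sigma_n}|z|^2\,\mu_f(\D z)\Bigr)^{1/2},
\end{equation*}
so that $\int_{\sigma_n}|z|^2\,\mu_f(\D z)\Le\|Nf\|^2$ uniformly in $n$; letting $n\to\infty$ yields $f\in\ddc_0$. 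This proves $\dz N\subset\ddc_0$, and on $\dz N$ the identity \liczp1 together with the form identity for $\tilde N$ forces $\is{Nf-\tilde Nf}{g}=0$ for all $g$, hence $N\subset\tilde N$. To upgrade this to equality I would invoke the rigidity of normal operators: from $N\subset\tilde N$ one gets $\tilde N\gw\subset N\gw$, and normality ($\dz{N\gw}=\dz N$, $\dz{\tilde N\gw}=\dz{\tilde N}$) then gives $\dz{\tilde N}\subset\dz N$; combined with $\dz N\subset\dz{\tilde N}$ this yields $\dz N=\dz{\tilde N}=\ddc_0$ and $N=\tilde N$. This settles \liczp2 and, as the norm formula for $\tilde N$, also \eqref{1.9.5.7}.

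For \liczp3 the work is routine once $N=\tilde N$ is known. First I would record the invariance and commutation \eqref{4.26.11}: for $f\in\dz N$ one has $\mu_{E(\sigma)f}=\chi_\sigma\,\mu_f$, so $\int|z|^2\mu_{E(\sigma)f}(\D z)=\int_\sigma|z|^2\mu_f(\D z)<+\infty$, whence $E(\sigma)f\in\dz N$; comparing forms, both $\is{NE(\sigma)f}{g}$ and $\is{E(\sigma)Nf}{g}$ collapse to $\int_\sigma z\,\mu_{f,g}(\D z)$, giving $NE(\sigma)f=E(\sigma)Nf$. Using this,
\begin{equation*}
\is{E(\sigma)Nf}{Nf}=\|E(\sigma)Nf\|^2=\|NE(\sigma)f\|^2=\int_\ccb|z|^2\,\mu_{E(\sigma)f}(\D z)=\int_\sigma|z|^2\,\mu_f(\D z),
\end{equation*}
which is \liczp3 for $\phi=\chi_\sigma$. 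Passing to simple functions by linearity and to arbitrary non-negative Borel $\phi$ by monotone convergence delivers the general identity $\int\phi\,\is{E(\D z)Nf}{Nf}=\int\phi\,|z|^2\is{E(\D z)f}{f}$. Finally \liczp4 is quoted verbatim from Theorem~\ref{specth}, which completes the proof.
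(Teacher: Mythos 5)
The paper offers no proof of this theorem to compare against: it is quoted as standard material, the preceding paragraph declaring Theorem~\ref{specth} ``true as stated due to the vast flexibility of the spectral integral'' and deferring to the monographs \cite{bir}, \cite{die}, \cite{nagy42}, \cite{wei}. Your reconstruction of the ``extras'' is correct and self-contained. The one step that genuinely needs an idea --- that \liczp1 forces $\dz N$ to equal the maximal domain $\ddc_0$ --- is handled properly: the truncations $N_n$, the identification of the density of $\mu_{f,N_nf}$ with respect to $\mu_f$, and Cauchy--Schwarz give the uniform bound $\int_{\sigma_n}|z|^2\mu_f(\D z)\Le\|Nf\|^2$, hence $\dz N\subset\ddc_0$; the reverse inclusion via $N\subset\tilde N\implies\tilde N\gw\subset N\gw$ and normality of both sides is exactly the maximality of normal operators among their (formally normal) extensions, which the paper itself records as a proposition (a normal $N_1\subset N$ with $N$ formally normal forces $N_1=N$), so you could simply cite that. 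The verification of \eqref{4.26.11} and of \liczp3 by checking that the two positive measures $\is{E(\cdot)Nf}{Nf}$ and $|z|^2\is{E(\cdot)f}{f}$ agree on Borel sets, then passing to simple functions and monotone limits, is routine and sound.

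One caveat worth recording: in the ``if'' direction of the equivalence, \liczp1 alone does not yield normality --- a non-closed restriction of $\tilde N$ to a dense invariant domain still satisfies \liczp1 but is merely formally normal --- so ``is a spectral integral'' must be read as the maximal-domain operator $\tilde N$, i.e.\ \liczp2 is in effect part of the hypothesis in that direction rather than only a consequence. Your standing assumption that $N$ is normal \emph{and} represented as in \liczp1 sidesteps this, consistently with the theorem's ``Moreover, if this happens'' phrasing, but the point deserves a sentence if the argument is to stand alone.
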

   This is more or less all what survives from surroundings of the
spectral theorem when passing from the bounded case to the
unbounded one.

   \subsubsection*{Invariant and reducing subspaces}
    A \underbar{closed} subspace $\mathcal L$ of $\hhc$ is {\it
invariant} for $A$ if $A(\mathcal L\cap\dz A) \subset\llc$; then
the {\em restriction} $A\rres{\mathcal L}\okr A\res{\mathcal
L\cap\dz A}$ is a operator in $\llc$ becomes clear. If $A$ is a
closable (closed) operator in $\hhc$ then so is $A\rres\llc$; this
is so because the notions are topological, with topology in the
graph space. The closed subspace $\llc$ is invariant for $A$ if
and only if $PAP=AP$, where $P$ is the orthogonal projection of
$\hhc$ onto $\mathcal L$.

    On the other hand, a \underbar{linear} subspace
$\ddc\subset\dz A$ is said to be {\it invariant} for an operator
$A$ in $\hhc$ if $A\ddc\subset\ddc$. If this happens and $\ddc$ is
not dense in $\hhc$ we consider the {\em restriction} $A\res\ddc$
as a densely defined operator in $\overline{\ddc}$. However, if
$\ddc$ is a dense in $\hhc$ then $A\res\ddc$ is still a densely
defined operator in $\hhc$.

   The above two concepts of invariance and restriction look much
alike. If a linear subspace $\ddc$ is invariant for $A$ then
$A\res\ddc\subset A\rres{\overline{\ddc}}$ whereas
$\overline{A\res\ddc}= \sbar A\rres{\overline{\ddc}}$ provided $A$
is closable\,\footnote{\; Identifying operators with their graphs
we can write $A\res\ddc=A\cap(\ddc\times\ddc)$ and
$A\rres\ddc=A\cap(\overline{D}\times\overline{D})$. Hence the
equality follows.}. This makes the difference more transparent.
\label{t}

 A step further, a closed subspace ${\mathcal L}$ {\it reduces} an
operator $A$ if both $\mathcal L$ and $\mathcal L^\perp$ are
invariant for $A$ as well as $P\dz A\subset\dz A$; all this is the
same as to require $P A\subset AP$. The restriction
$A\rres{\mathcal L}$ is called a {\it part} of $A$ in $\mathcal
L$.

   \subsubsection*{$\ccc^\infty$--vectors} For  an operator $A$
set
   \begin{gather*}
   \ddc^\infty(A)\okr\bigcap_{n=0}^\infty\dz{A^n},
   \\
\ddc^\infty(A,A\gw)\okr\bigcap_{\substack{{A_1,\ldots A_n\in\{A\gw,A\}}\\
{\text{any finite choice}}}} \dz{A_1\cdots A_n}.
   \end{gather*}
   It is customary to refer to vectors in any of these two classes
as to $\ccc^\infty$--ones.

One has to notify that
   \begin{equation*}
   \ddc^\infty(A\gw,A)=\ddc^\infty(A,
A\gw)\subset\ddc^\infty(A\gw)\cap\ddc^\infty(A).
   \end{equation*}
   If $f\in\ddc^\infty(A)$ then $p(A)f\in\ddc^\infty(A)$ for any
$p\in\ccb[Z]$ as well, if $f\in\ddc^\infty(A\gw,A)$ then
$p(A\gw,A)f\in\ddc^\infty(A\gw,A)$ for any $p\in\ccb[Z,\sbar Z]$;
the latter regardless any commutativity property between $A$ and
$A\gw$, cf. footnote \ref{futa1}.

   A vector $f\in\ddc^\infty(A)$ may belong to one of the
following classes: $\bbc(A)$ ({\em bounded}), $\aac(A)$ ({\em
analytic}) or $\qqc(A)$ ({\em quasianalytic}). While the last two
are rather pretty well known we give here the definition of {\em
bounded vectors}, they are those $f$'s in $\ddc^\infty(A)$ for
which there are $a,b$ such that $\|A^nf\|\Le a b^n$, $n=0,1,\dots$
It is clear that
   \begin{equation*}
   \bbc(A)\subset\aac(A)\subset\qqc(A).
   \end{equation*}
   The first two linear subspaces whereas the third is
not\,\footnote{\;There are two more notions: seminanalytic and
Stieltjes vectors, they are rather less popular, cf.
\cite{book_sub}}.

   \subsubsection*{A core} This is an important invention for
   unbounded operators when a need not to consider them closed
becomes strong. Let us call here that this appear more often than
someone may imagine, take an operator with invariant domain, if it
is \underbar{closed}, then in the vast majority of cases it turns
out to be necessarily \underbar{bounded}, see \cite{ota1}. If
someone does deal with a closed operator and in spite of this
wants to consider an invariant domain a core comes to rescue. Thus
$\ddc\subset\dz A$ is a {\em core} of a closable\,\footnote{\;A
core may be defined even for non-closable operators because in
fact the graph topology is behind the notion.} operator $A$ if
$\overline{A\res\ddc}=\sbar A$. Trivially, a domain $\dz A$ is
always a core of $A$ and, on the other hand, a core must
necessarily be dense. The essence of the notion of
 core is in offering additional `domains' for an operator. On this
occasion we recall a practical notion: a closable $N$ is called
{\em essentially normal} if $\sbar N$ is normal.

   A handy necessary and sufficient condition for $\ddc$ to be a
core of $A$ is the following implication to hold
   \begin{equation} \label{3.26.11}
   \text{for $f\in\dz A$ such that $\is fg+\is{Af}{Ag}=0$ for all
$g\in\ddc$ implies $f=0$.}
   \end{equation}

   The observation which follows fits within the character of this
section and makes intrinsic use of the notion of core.
   \begin{pro}\tlabel{t2.26.11}
   Bounded vectors of a normal operator form a core of it.
Therefore, a normal operator decomposes as an orthogonal sum of a
sequence of bounded normal operators.
   \end{pro}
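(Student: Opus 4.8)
The plan is to read everything off the spectral measure $E$ furnished by Theorem~\ref{t1.25.11}, using the commutation \eqref{4.26.11} to move between $N$ and the bounded truncations that $E$ dictates. I would introduce the spectral projections $P_n\okr E(\{z\in\ccb\colon|z|\Le n\})$; since the disks $\{|z|\Le n\}$ increase to $\ccb$, the $P_n$ increase strongly to the identity. On $\ob{P_n}$ the operator $N$ is bounded by $n$: from \liczp 2 one gets, for every $f\in\hhc$ and every $k$, that $\int_\ccb|z|^{2k}\is{E(\D z)P_nf}{P_nf}=\int_{|z|\Le n}|z|^{2k}\is{E(\D z)f}{f}\Le n^{2k}\|f\|^2$, so $P_nf\in\ddc^\infty(N)$ with $\|N^kP_nf\|\Le n^k\|f\|$. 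Hence every $P_nf$ is a bounded vector and $\bigcup_n\ob{P_n}\subset\bbc(N)$.

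Next I would establish the core property by a graph-norm approximation. Fix $f\in\dz N$. Strong convergence $P_n\to I$ gives $P_nf\to f$, while \eqref{4.26.11} applied to $f\in\dz N$ yields $NP_nf=P_nNf\to Nf$. Thus $P_nf\to f$ in the graph norm, with each $P_nf\in\bbc(N)$; since $N$ is closed this forces $\overline{N\res{\bbc(N)}}=\sbar N=N$ (equivalently, the criterion \eqref{3.26.11} is met), so $\bbc(N)$ is a core.

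For the stated decomposition I would refine the slicing into annuli $\Delta_1\okr\{|z|\Le1\}$ and $\Delta_k\okr\{k-1<|z|\Le k\}$ for $k\Ge2$, and set $Q_k\okr E(\Delta_k)$. These projections are mutually orthogonal with $\sum_kQ_k=I$ strongly, and by \eqref{4.26.11} each $\hhc_k\okr\ob{Q_k}$ reduces $N$. The part $N_k\okr N\rres{\hhc_k}$ is normal with spectral measure $\sigma\mapsto E(\sigma\cap\Delta_k)$, and since its closed support lies in $\overline{\Delta_k}\subset\{|z|\Le k\}$, item \liczp 4 gives $\|N_k\|\Le k$. Therefore $N=\bigoplus_kN_k$, an orthogonal sum of bounded normal operators.

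The one delicate point is the domain bookkeeping concealed in $P_nNf=NP_nf$: it must be invoked for $f\in\dz N$ precisely as licensed by the inclusion \eqref{4.26.11}, not as a naive operator identity; likewise, that each $N_k$ is genuinely normal rests on the spectral description of its part rather than on any formal manipulation of unbounded symbols.
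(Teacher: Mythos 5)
Your proof is correct, and two of its three steps coincide with the paper's: producing bounded vectors as $E(\sigma)f$ for bounded Borel $\sigma$ via the spectral calculus and \eqref{4.26.11}, and slicing $\ccb$ into bounded pieces to obtain mutually orthogonal reducing subspaces on which the parts of $N$ are bounded and normal. Where you genuinely diverge is the core step. The paper verifies the criterion \eqref{3.26.11}: assuming $\is fg+\is{Nf}{Ng}=0$ for all $g=E(\sigma)h$ with $\sigma$ bounded, it commutes $E(\sigma)$ through $N\gw N$ to obtain $E(\sigma)(f+N\gw Nf)=0$ for every bounded $\sigma$, hence $f+N\gw Nf=0$ and $f=0$. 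You instead exhibit an explicit graph-norm approximation, $P_nf\to f$ and $NP_nf=P_nNf\to Nf$ for $f\in\dz N$, and conclude from closedness of $N$ that $\overline{N\res{\bbc(N)}}=N$. Your route is more constructive and arguably more elementary, since it bypasses the duality hidden in \eqref{3.26.11} and exploits the monotone exhaustion of $\ccb$ by disks; the paper's argument is the one that survives when no such exhaustion by spectral projections is available. One line worth adding to your write-up: the final identity $N=\bigoplus_kN_k$, as opposed to mere agreement of the two operators on a core, deserves a word — either observe that $\dz{\bigoplus_kN_k}=\zb{f}{\sum_k\|N_kQ_kf\|^2<\infty}$ coincides with $\dz N$ by \liczp 2, or argue as the paper does that the orthogonal sum of bounded parts is closed and extends the restriction of $N$ to the core you have just produced (your $P_n$'s being exactly the partial sums $\sum_{j\Le n}Q_j$, this is already implicit in your approximation).
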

   \begin{proof}
    Due to \liczp 2 in Spectral Theorem \ref{t1.25.11} for any
bounded set $\sigma\subset\ccb$ and $f\in$ the vector $E(\sigma)f$
is in $\dz N$ and, by \eqref{4.26.11}, $
E(\sigma)NE(\sigma)f=NE(\sigma)f$. This means that the linear
space $\bbc(\sigma)\okr\zb{E(\sigma)f}{f\in\hhc}$ is invariant for
$N$. Because $\cup_\sigma\bbc(\sigma)$ is dense every $N^n$ is
normal as well. Therefore, by \eqref{1.9.5.7}, for any $f\in\hhc$
and any bounded set $\sigma$ $E(\sigma)f$ is a bounded vector. To
check that they all together constitute a core proceed as follows.
Due to \eqref{4.26.11}, $E(\sigma)N\gw Nf=N\gw E(\sigma)Nf=N\gw
NE(\sigma)f$ and therefore condition \eqref{3.26.11} gives
   \begin{equation*}
   E(\sigma)f+E(\sigma)N\gw Nf=0
   \end{equation*}
   Because $\sigma$ is an arbitrary bounded Borel set we infer
that $f+N\gw Nf=0$, hence $f=0$.

  Decomposing $\ccb$ as a disjoint sum of bounded Borel sets we
get the orthogonal decomposition in question. More precisely, if
$\{\sigma_n\}_n$ is such a partition of $\ccb$ then the subspaces
$E(\sigma_n)\hhc$ are mutually orthogonal and reduce $N$; this is
due to \eqref{4.26.11}. Notice that because the parts
$N\rres{E(\sigma_n)N}$ are bounded a graph argument guarantees the
orthogonal sum of the parts is a closed operator. Now because
bounded vectors form a core of $N$ the final conclusion comes out.
   \end{proof}
   \begin{cor} \tlabel{t3.3.1}
   Any $\ddc\in\{\bbc(N),\aac(N),\lin\qqc(N)\}$ is a core of a
normal operator $N$.
   \end{cor}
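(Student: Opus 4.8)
The plan is to deduce Corollary \ref{t3.3.1} from Proposition \ref{t2.26.11} by exploiting the inclusions $\bbc(N)\subset\aac(N)\subset\qqc(N)$ together with the fundamental fact that a linear subspace containing a core is itself a core. The starting observation is that Proposition \ref{t2.26.11} already hands us what we need at the bottom of the hierarchy: $\bbc(N)$ is a core of $N$. So the task reduces to propagating the core property upward through the three candidate spaces.

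First I would record the elementary monotonicity principle for cores: if $\ddc_0$ is a core of a closable operator $A$ and $\ddc_0\subset\ddc\subset\dz A$ with $\ddc$ linear, then $\ddc$ is also a core of $A$. This is immediate from the graph-space characterization, since $\overline{A\res{\ddc_0}}=\sbar A$ forces $\overline{A\res\ddc}=\sbar A$ once $\ddc$ is squeezed between $\ddc_0$ and $\dz A$ --- the graph of $A\res{\ddc_0}$ is dense in the graph of $\sbar A$, hence so is the larger graph of $A\res\ddc$. One only needs to confirm each candidate is genuinely a \emph{linear} subspace of $\dz N$: $\bbc(N)$ and $\aac(N)$ are linear by the remark preceding the corollary, while $\qqc(N)$ need not be, which is exactly why the statement writes $\lin\qqc(N)$ rather than $\qqc(N)$ itself.

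With the monotonicity principle in hand the three cases fall out in sequence. For $\ddc=\bbc(N)$ there is nothing to prove beyond Proposition \ref{t2.26.11}. For $\ddc=\aac(N)$ we invoke $\bbc(N)\subset\aac(N)\subset\ddc^\infty(N)\subset\dz N$ and apply the principle. For $\ddc=\lin\qqc(N)$ we use $\bbc(N)\subset\aac(N)\subset\qqc(N)\subset\lin\qqc(N)$, noting that $\lin\qqc(N)$ is by construction a linear subspace sitting inside $\ddc^\infty(N)\subset\dz N$; since it still contains the core $\bbc(N)$, it is a core as well.

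The only point requiring a modicum of care --- and what I expect to be the main (if modest) obstacle --- is verifying the inclusion chain at the level of domains, namely that each of these $\ccc^\infty$--type spaces genuinely lands in $\dz N$, so that the restriction $N\res\ddc$ makes sense and the monotonicity argument is legitimate. This is guaranteed because every bounded, analytic, or quasianalytic vector lies in $\ddc^\infty(N)=\bigcap_n\dz{N^n}\subset\dz N$, but it is worth stating explicitly rather than leaving implicit. No genuine analysis beyond Proposition \ref{t2.26.11} is needed; the corollary is essentially a packaging of that proposition through the trivial fact that any linear superspace of a core, still contained in the domain, is a core.
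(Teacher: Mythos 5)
Your proposal is correct and follows exactly the route the paper intends: Proposition \ref{t2.26.11} supplies $\bbc(N)$ as a core, and the chain $\bbc(N)\subset\aac(N)\subset\lin\qqc(N)\subset\ddc^\infty(N)\subset\dz N$ together with the monotonicity of the core property (any linear subspace squeezed between a core and the domain is itself a core, by the graph-closure characterization) finishes the argument. Your explicit attention to linearity of each candidate space and to the inclusion in $\dz N$ is exactly the right care to take; nothing is missing.
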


      \subsubsection*{Resemblance of normality: formal normality}
      The first and very serious surprise comes when one asks what
happens now to Triviality \ref{t1.27.12}. In the unbounded case
one gets nothing but $\ddc\subset\dz{(N\res\ddc)\gw}$. If $\ddc$
is a core of $N$ then \eqref{1.12.11} can be stated as
   \begin{gather}
   \begin{split}\label{1.27.12} \dz N\subset\dz{N\gw},\phantom{aaaaa}
   \\
    \|Nf\|=\|N\gw f\|,\quad f\in\dz N. \end{split}
   \end{gather}
and nothing more. Therefore, we have to call those $N$'s somehow.
Because \eqref{1.27.12} and \eqref{1.12.11} look much alike, the
name in use for operators satisfying \eqref{1.27.12} is: {\em
formally normal}. Though there is a tiny difference in definitions
of normality and formal normality, `$=$' is replaced by
`$\subset$', the consequences are rather significant as we are
going to realize later.

   Notice that if $N$ is formally normal then it must necessarily
be closable. Moreover, its closure $\sbar N$ is formally normal as
   \begin{equation} \label{1.18.1}
   \dz{\sbar N}\subset\dz{N\gw}.
   \end{equation}
 Moreover, if $N$ is formally normal and $\dz N$ is a core of
$N\gw$ then $N$ is essentially normal.

   \begin{pro}\tlabel{t1.1.1}
   Suppose $N$ is formally normal in $\hhc$. If $N_1$ is a normal
operator $N_1$ in $\hhc$ such that $N_1\subset N$ then $N$ is
normal too and $N_1=N$.
   \end{pro}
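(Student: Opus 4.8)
The plan is to play the formal normality of $N$ against the genuine normality of $N_1$ by tracking domains through the inclusion $N_1\subset N$ together with its adjoint. The only structural facts I need are that passing to the adjoint reverses inclusions, that formal normality of $N$ supplies the \emph{one}-sided inclusion $\dz N\subset\dz{N\gw}$ along with the norm identity $\|Nf\|=\|N\gw f\|$ on $\dz N$, and that normality of $N_1$ supplies, via \eqref{2.19.11}, the \emph{two}-sided equality $\dz{N_1}=\dz{N_1\gw}$. The decisive point is that this last two-sided equality, combined with inclusion-reversal, will force a chain of domain inclusions to bite its own tail.

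Concretely, from $N_1\subset N$ I first obtain, upon taking adjoints, $N\gw\subset N_1\gw$, and hence $\dz{N\gw}\subset\dz{N_1\gw}$. Assembling the four pieces gives
\[
\dz{N_1}\subset\dz N\subset\dz{N\gw}\subset\dz{N_1\gw}=\dz{N_1},
\]
where the first inclusion is $N_1\subset N$, the second is formal normality of $N$, the third is the adjoint inclusion just derived, and the final equality is normality of $N_1$. Since the chain begins and ends at $\dz{N_1}$, every inclusion in it is in fact an equality; in particular $\dz{N_1}=\dz N$ and $\dz N=\dz{N\gw}$.

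From $\dz{N_1}=\dz N$ and the agreement $N_1 f=Nf$ for $f\in\dz{N_1}$ I conclude $N_1=N$. In particular $N$ coincides with the normal, hence closed, operator $N_1$, so $N$ is closed. Finally, $\dz N=\dz{N\gw}$ holds by the squeezed chain and $\|Nf\|=\|N\gw f\|$ for $f\in\dz N$ holds by the formal normality of $N$; these are exactly the conditions \eqref{2.19.11} characterizing normality of a closed operator, so $N$ is normal. There is no genuinely hard step: the whole argument is bookkeeping of domains, and the only thing one must watch is the direction of the adjoint inclusion $N\gw\subset N_1\gw$ and the observation that it is precisely the two-sided domain equality built into normality of $N_1$ — not merely the one-sided inclusion of formal normality — that lets the chain close up.
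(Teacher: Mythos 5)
Your proof is correct and follows essentially the same route as the paper: take the adjoint of $N_1\subset N$ to get $N\gw\subset N_1\gw$, then close the domain chain $\dz{N_1}\subset\dz N\subset\dz{N\gw}\subset\dz{N_1\gw}=\dz{N_1}$ using formal normality of $N$ and the domain equality built into normality of $N_1$. The paper's proof is the same bookkeeping written more tersely.
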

   \begin{proof}
   Because $N_1\subset N$, $N\gw\subset N\gw_1$ and consequently
$$\dz{N\gw}\subset\dz{N\gw_1}=\dz{N_1}\subset\dz N,$$ which makes
$N$ normal and equal to $N_1$.
   \end{proof}

      \subsubsection*{The operator of multiplication by
independent variable}
   Let $\mu$ be a positive measure on $\ccb$ of finite
moments\,\footnote{\label{tt}\;We say $\mu$ has finite moments if
$\int_\ccb|z|^{2n}\mu(\D z)<\infty$ for all $n=0,1,\dots$. This is
what we are taking for granted in this paper once and for all.}.
Denote by $\ppc(\mu)$ the polynomials in $\ccb[Z,\sbar Z]$
regarded as members of $\llc^2(\mu)$. Define the operator $M_Z$ of
multiplication by the independent variable in $\llc^2(\mu)$ as
   \begin{gather*}
   \dz {M_Z}\okr\zb{f\in\llc^2(\mu)}
{\int\nolimits_\ccb|zf(z)|^2\mu(\D z)<\infty},
   \\
   (M_Z f)(z)\okr zf(z),\; z\in\supp\mu,\quad \funk{M_Z} f{M_Z f}.
   \end{gather*}
   Notice that the characteristic (indicator) functions $1_\sigma$
of Borel subsets of $\ccb$ are in $\dz {M_Z}$. Therefore $M_Z$ is
densely defined, and because $(M_Z)\gw=M_{\sbar Z}$ as well as
$(M_{\sbar Z})\gw=M_{Z}$, the operator $M_Z$ is closed and,
consequently, it is normal.

   Suppose $\ppc(\mu)$ is dense $\llc^2(\mu)$. Then
$M_Z\res{\ppc(\mu)}$ is a densely defined operator. Is it
essentially normal? In general not because
   \begin{fact} \tlabel{t2.27.12}
   $M_Z\res{\ppc(\mu)}$ is essentially normal if and only if
$\ppc(\mu)$ is a core of $M_Z$. This happens if and only if
$\ppc(\mu)$ is dense in
$\llc^2((1+|Z|^2)\mu)$\,\footnote{\label{ultra}\;Such measures are
called in \cite{fu} {\em ultradeterminate}. By the way, a measure
is ultradeterminate if the polynomials in $\ppc(\mu)$ are dense in
some $\llc^p(\mu)$, $p>2$ (see \cite{fu}, p. 61).}.
   \end{fact}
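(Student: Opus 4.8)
The plan is to prove the two displayed equivalences in turn, writing $S\okr M_Z\res{\ppc(\mu)}$. A preliminary observation I would record is that, because $\mu$ has finite moments, $\int_\ccb|zp(z)|^2\mu(\D z)<\infty$ for every polynomial $p$, so $\ppc(\mu)\subset\dz{M_Z}$; moreover $\ppc(\mu)$ is invariant for $M_Z$ (multiplying a member of $\ccb[Z,\sbar Z]$ by $z$ stays in $\ccb[Z,\sbar Z]$). Since $\ppc(\mu)$ is dense in $\llc^2(\mu)$, $S$ is a densely defined restriction of the closed operator $M_Z$, whence $S$ is closable and $\sbar S\subset M_Z$.

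For the equivalence of essential normality with the core property the easy direction is immediate: if $\ppc(\mu)$ is a core of $M_Z$ then $\sbar S=\sbar{M_Z}=M_Z$ is normal, so $S$ is essentially normal. For the converse I would invoke Proposition \ref{t1.1.1}. Suppose $\sbar S$ is normal. As $M_Z$ is normal it is in particular formally normal, and $\sbar S\subset M_Z$; Proposition \ref{t1.1.1} then forces $\sbar S=M_Z$, that is, $\ppc(\mu)$ is a core of $M_Z$. This is the conceptual heart of the statement: a normal operator admits no proper formally normal extension, so one cannot read off essential normality from mere denseness of the polynomials in $\llc^2(\mu)$.

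For the second equivalence I would run the candidate core $\ppc(\mu)$ through the core criterion \eqref{3.26.11} applied to $A=M_Z$. For $f\in\dz{M_Z}$ and $g\in\ppc(\mu)$ a direct computation gives
\begin{equation*}
\is fg+\is{M_Zf}{M_Zg}=\int_\ccb(1+|z|^2)\,f(z)\,\overline{g(z)}\,\mu(\D z),
\end{equation*}
which is exactly the inner product of $f$ and $g$ in $\llc^2((1+|Z|^2)\mu)$. The set $\dz{M_Z}$ coincides with $\llc^2((1+|Z|^2)\mu)$, and since the weight $1+|z|^2$ is bounded below by $1$ the null sets for $\mu$ and for $(1+|Z|^2)\mu$ agree, so $f=0$ means the same in both spaces. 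Hence the implication in \eqref{3.26.11} says precisely that the only element of $\llc^2((1+|Z|^2)\mu)$ orthogonal there to all of $\ppc(\mu)$ is $0$, i.e.\ that $\ppc(\mu)$ is dense in $\llc^2((1+|Z|^2)\mu)$.

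I expect the main obstacle to be conceptual rather than computational, namely locating the right tool for the first equivalence: essential normality genuinely cannot be verified by a density argument in $\llc^2(\mu)$ and must be routed through the maximality of normal operators in Proposition \ref{t1.1.1}. The remaining work is bookkeeping --- checking the chain $\ppc(\mu)\subset\dz{M_Z}\subset\llc^2((1+|Z|^2)\mu)$, the convergence of all the integrals by the finite-moment assumption, and the coincidence of $\mu$- and $(1+|Z|^2)\mu$-null sets so that ``core'' and ``dense'' transfer without loss.
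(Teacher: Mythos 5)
Your proposal is correct and follows essentially the paper's own route: the paper settles the second equivalence with the single remark that $\llc^2((1+|Z|^2)\mu)$ carries the graph norm of $M_Z$, which is precisely the content of your identity $\is fg+\is{M_Zf}{M_Zg}=\int_\ccb(1+|z|^2)f\bar g\,\D\mu$ combined with the core criterion \eqref{3.26.11}. The first equivalence, which the paper leaves implicit, is handled exactly as you do it, by the maximality of normal operators among their formally normal extensions (Proposition \ref{t1.1.1}) applied to $\overline{M_Z\res{\ppc(\mu)}}\subset M_Z$.
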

   The second conclusion of the above follows immediately from the
fact that the space $\llc^2((1+|Z|^2)\mu)$ bears the graph norm
with respect to the operator $M_Z$.

   \begin{rem} \tlabel{t1.28.12}
The operator $M_Z\res{\ppc(\mu)}$ is formally normal in the
closure $\overline{\ppc(\mu)}$ of $\ppc(\mu)$ in
$\llc^2(\mu)$--norm regardless $\ppc(\mu)$ is dense in
$\llc^2(\mu)$ or not and has a normal extension $M_Z$ in
$\llc^2(\mu)$. In other words, $M_Z\res{\ppc(\mu)}$ is always
formally normal, has a normal extension in $\llc^2(\mu)$ though it
may act within a \underbar{smaller} space $\overline{\ppc(\mu)}$.
   \end{rem}

      \subsubsection*{Repairing $*$--cyclicity}
      The notion $*$--cyclicity, as defined in the greyish area
around \eqref{5.19.11} for bounded operators, for unbounded ones
requires \eqref{5.19.11} to hold for $f\in\ddc^\infty(N,N\gw)$.
The above considerations show that this definition is not
satisfactory in the unbounded case for quite a number of reasons:
neither Corollary \ref{t2.19.11} nor Fact \ref{t3.19.11} holds
true in particular.
    Therefore, call now $N$ $*$-{\em cyclic}\,\footnote{\;It is
tempting to call it rather {\em graph} $*$--{\em cyclic} as graph
topology is behind this. Regrettably, we have to abandon this
appeal; also because present term includes trivially that for
bounded operators.} with a cyclic vector $e\in\ddc^\infty(N\gw,N)$
if the set\,\footnote{\;The remark made in footnote \ref{futa1}
applies here as well.} \eqref{5.19.11} is a core of $N$. Under
this modification both Corollary \ref{t2.19.11} and Fact
\ref{t3.19.11} \underbar{revive}.

    \subsubsection*{A word about spectral properties}
    An example of an ultradeterminate measure is the Gaussian one,
that is $\E^{-|x|^2}\D x$. The polynomials in $\ppc(\mu)$
constitute a core of $M_Z$ and all the oddities are left apart.
However, here $\spek N=\ccb$ which \underbar{excludes} any
resolvent tool to be used; this is what someone ought to take into
account when trying to approach the theory.

    \subsection*{Assorted topics on unbounded subnormals}
    \subsubsection*{Subnormality and its characterization}
     The defining formula \eqref{3.28.12} remains working also in
the unbounded case; more precisely an operator $S$ densely defined
in a Hilbert space is called {\em subnormal} if there exists a
normal operator $N$ is a Hilbert space $\kkc$ containing
isometrically $\hhc$ such that \eqref{3.28.12} holds true. Another
way of expressing this is that $\hhc$ is invariant for $N$ and
$S\subset N\rres\hhc$.

   The only characterization of subnormality which does not impose
any constrain on behaviour of domains of the operator is that via
semispectral measures\,\footnote{\;A semispectral measure differs
from a spectral one by dropping the assumption its values are
orthogonal projections; it is also known under the name `positive
operator valued measure'.} (see, \cite{bishop} or \cite{fojasz})
or its versions (like in \cite{sesq} and \cite{cyk}).
    \begin{thm}\label{2x}
    An operator $S$ is subnormal if and only if there is a
    semispectral measure $F$ on Borel sets of $\ccb$ such
    that\,\footnote{\;Condition \eqref{6.28.12} corresponds to
    those in Proposition \ref{t1.30.12}.}${}^{,}$\,\footnote{\; If
    \eqref{6.28.12} holds only for $m=1$ and $n=1$ ($m=n=0$ is a
    triviality) then $S$ has a normal dilation exclusively and
    {\em vice versa}. In that case the fourth condition encoded in
    \eqref{6.28.12} downgrades to the inequality
    $\is{Sf}{Sg}\Le\int_{\mathbb C}|z|^2\is{F(\D z)f}{g}$,
    $f,g\in\dz S$.}
    \begin{equation}\label{6.28.12}
    \is{S^mf}{S^nf}=\int_{\mathbb C}z^m\sbar z^n\is{F(\D
    z)f}{g},\quad m,n=0,1, \quad f,g\in\dz S.
    \end{equation}
    \end{thm}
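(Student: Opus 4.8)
The plan is to regard \eqref{6.28.12} as saying precisely that $F$ is the compression to $\hhc$ of the spectral measure of a normal extension, and to move between the two descriptions by Naimark's dilation theorem. Write $P$ for the orthogonal projection of the ambient space onto $\hhc$ throughout.

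\emph{Necessity.} Suppose $S\subset N$ with $N$ normal in some $\kkc$ containing $\hhc$ isometrically, and let $E$ be the spectral measure of $N$ delivered by Spectral Theorem \ref{t1.25.11}. Put $F(\sigma)\okr PE(\sigma)\rres\hhc$; this is a semispectral measure because each $E(\sigma)$ is an orthogonal projection. For $f,g\in\dz S$ and $m,n\in\{0,1\}$ one has $\dz S\subset\dz N$ and hence $S^mf=N^mf$, $S^ng=N^ng$. Writing $N^mf=\int_\ccb z^m E(\D z)f$ and using the orthogonality of the spectral measure, which collapses the resulting double integral to the diagonal, I would obtain $\is{S^mf}{S^ng}=\int_\ccb z^m\sbar z^n\is{E(\D z)f}{g}=\int_\ccb z^m\sbar z^n\is{F(\D z)f}{g}$, the last step because $f,g\in\hhc$. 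This is \eqref{6.28.12}.

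\emph{Sufficiency.} This is the substantial half. Taking $m=n=0$ in \eqref{6.28.12} gives $F(\ccb)=I$, so $F$ is a normalized semispectral measure and Naimark's dilation theorem (cf. \cite{bishop}, \cite{fojasz}) applies: there are a Hilbert space $\kkc$ containing $\hhc$ isometrically and a spectral measure $E$ on $\ccb$ acting in $\kkc$ with $F(\sigma)=PE(\sigma)\rres\hhc$. Let $N$ be the spectral integral of the identity function with respect to $E$; by Spectral Theorem \ref{t1.25.11} $N$ is normal, and it remains to verify $S\subset N$. Fix $f\in\dz S$. Since $\is{F(\D z)f}{f}=\is{E(\D z)f}{f}$ for vectors of $\hhc$, the instance $m=n=1$ of \eqref{6.28.12} reads $\int_\ccb|z|^2\is{E(\D z)f}{f}=\|Sf\|^2<\infty$, whence $f\in\dz N$ by condition \liczp 2 of Spectral Theorem \ref{t1.25.11}. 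The instance $m=1$, $n=0$ gives $\is{Nf}{g}=\int_\ccb z\is{E(\D z)f}{g}=\is{Sf}{g}$ for every $g\in\dz S$, hence, both sides being continuous antilinear functionals of $g$ and $\dz S$ being dense, for every $g\in\hhc$; that is $PNf=Sf$. Finally $\|Nf\|^2=\int_\ccb|z|^2\is{E(\D z)f}{f}=\|Sf\|^2=\|PNf\|^2$, and as $P$ is an orthogonal projection this forces $(I-P)Nf=0$, so $Nf=PNf=Sf\in\hhc$. Thus $Nf=Sf$ for each $f\in\dz S$, i.e. $S\subset N$, and $S$ is subnormal.

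The main obstacle is exactly the last move in the sufficiency argument: the identity $PNf=Sf$ controls only the $\hhc$--component of the dilated vector $Nf$, and one still has to force $Nf$ itself back into $\hhc$. The norm equality furnished by the $m=n=1$ instance of \eqref{6.28.12} is what does this, through the Pythagorean splitting across $P$. Apart from this, the only heavy external input is Naimark's dilation theorem for semispectral measures, the remaining work being the domain bookkeeping that the unbounded setting always demands.
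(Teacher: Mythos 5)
Your proof is correct and is essentially the canonical argument for this theorem: the paper itself supplies no proof, deferring to the cited sources, and their route --- Naimark dilation of the normalized semispectral measure $F$, the $(m,n)=(1,0)$ instance giving $PNf=Sf$, and the $(m,n)=(1,1)$ instance forcing $(I-P)Nf=0$ through the Pythagorean identity --- is exactly what you reproduce, including the correct observation that the norm equality is what upgrades a mere dilation to an extension. You also (rightly) read the left-hand side of \eqref{6.28.12} as $\is{S^mf}{S^ng}$, silently correcting an evident typo in the statement.
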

   Notice that semispectral measures related to a subnormal
operator may not be uniquely determined, see \cite{expl} for an
explicit example. As spectral measures of normal extensions come
via dilating semispectral measure, according Naimark's dilation
theorem, cf. \cite{nai}, we may have quit a number of them as
well. This foretells somehow the problem with uniqueness (and
minimality) we are going to expose a little bit later. So far we
turn Theorem \ref{2x} into an equivalent form involving scalar
spectral measures, cf. \cite{cyk}.

   Call a family $\{\mu_{f}\}_{f\in\hhc}$ of positive measures on
$\ccb$, a family of {\em elementary spectral measures} of $S$ such
that for $f,g\in\hhc$
    \begin{gather} \label{3.17.1}
   \text{$\mu_{\lambda f}(\sigma)=|\lambda|^2\mu_f(\sigma)$ for
$\lambda\in \mathbb C$,}\quad \mu_f(X)=\|f\|^2
    \\ \label{4.17.1}
    \mu_{f+g}(\sigma)+\mu_{f-g}(\sigma)=2(\mu_f(\sigma)
    +\mu_g(\sigma))
    \end{gather}
   and
    \begin{equation*}
    \is{S^mf}{S^nf}=\int_{\mathbb C}z^m\sbar z^n\mu_{f}(\D
    z),\quad m,n=0,1, \quad f\in\dz S.
    \end{equation*}
   \begin{thm}[A version of Theorem \ref{2x}] \tlabel{t2.30.12}
   An operator $S$ is subnormal if and only if it has a family of
elementary spectral measures.
 \end{thm}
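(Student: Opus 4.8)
The plan is to establish equivalence between Theorem~\ref{t2.30.12} and the already-available Theorem~\ref{2x}, rather than constructing a normal extension directly. The crux is to show that a semispectral measure $F$ satisfying \eqref{6.28.12} is the same datum as a family $\{\mu_f\}_{f\in\hhc}$ of elementary spectral measures, just repackaged. First I would prove the forward implication: assume $S$ is subnormal, so by Theorem~\ref{2x} there is a semispectral measure $F$ with
\begin{equation*}
\is{S^mf}{S^nf}=\int_{\ccb}z^m\sbar z^n\is{F(\D z)f}{f},\quad m,n=0,1,\ f\in\dz S.
\end{equation*}
(I take the diagonal $g=f$ in \eqref{6.28.12}.) For each $f\in\hhc$ define the scalar set function $\mu_f(\sigma)\okr\is{F(\sigma)f}{f}$. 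Since $F(\sigma)$ is a positive operator, each $\mu_f$ is a positive measure, and the moment identity transfers verbatim. The parallelogram-type law \eqref{4.17.1} and the homogeneity and normalization in \eqref{3.17.1} then follow from the corresponding algebraic identities for the sesquilinear form $(f,g)\mapsto\is{F(\sigma)f}{g}$, namely positivity, conjugate-bilinearity, and $F(X)=I$.

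For the converse, suppose $S$ has a family $\{\mu_f\}$ of elementary spectral measures. The task is to manufacture from the diagonal data $\mu_f$ a genuine operator-valued semispectral measure $F$ and then invoke Theorem~\ref{2x}. The standard device is polarization: for fixed Borel $\sigma$, the map $f\mapsto\mu_f(\sigma)$ is a nonnegative quadratic form satisfying the parallelogram law \eqref{4.17.1}, so it comes from a bounded nonnegative sesquilinear form $b_\sigma(f,g)$ obtained by the polarization formula $b_\sigma(f,g)=\tfrac14\sum_{k=0}^3\I^k\mu_{f+\I^k g}(\sigma)$. Boundedness of $b_\sigma$ by $\|f\|\,\|g\|$ follows from $\mu_f(\sigma)\Le\mu_f(\ccb)=\|f\|^2$ using \eqref{3.17.1}. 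By the Riesz representation for bounded forms there is a unique $F(\sigma)\in\bbs(\hhc)$ with $0\Le F(\sigma)\Le I$ and $\is{F(\sigma)f}{g}=b_\sigma(f,g)$; countable additivity of $F$ in the weak operator topology is inherited from that of each $\mu_f$, and $F(\ccb)=I$ from the normalization. Thus $F$ is a semispectral measure, and the moment condition \eqref{6.28.12} with $g=f$ is immediate; the general off-diagonal case of \eqref{6.28.12} follows again by polarization in $f,g$.

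The main obstacle I anticipate is the off-diagonal moment identity: the family $\{\mu_f\}$ records only the diagonal quantities $\is{S^mf}{S^nf}$, whereas \eqref{6.28.12} in Theorem~\ref{2x} carries a second vector $g$. Recovering $\is{S^mf}{S^ng}=\int z^m\sbar z^n\is{F(\D z)f}{g}$ for $f,g\in\dz S$ requires that the polarized form $\is{F(\D z)f}{g}$ genuinely computes these mixed moments, which is exactly what polarization in the two variables $f,g$ delivers once the diagonal identity holds for all vectors in $\dz S$ and the form $b_\sigma$ agrees with $\is{F(\sigma)\cdot}{\cdot}$. A subtlety worth flagging is the domain bookkeeping: \eqref{6.28.12} and the defining moment identity for $\{\mu_f\}$ are asserted only for $f\in\dz S$, so the polarization in the two-variable moment step must keep $f,g$ inside $\dz S$ (a linear subspace), which is harmless since $\dz S$ is linear. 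With these matched, both directions close and Theorem~\ref{t2.30.12} reduces cleanly to Theorem~\ref{2x}.
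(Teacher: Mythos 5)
Your proposal is correct and matches the paper's intent: the paper presents Theorem \ref{t2.30.12} as a reformulation of Theorem \ref{2x} obtained by passing between the semispectral measure $F$ and its diagonal scalar measures $\mu_f=\is{F(\,\cdot\,)f}{f}$, with the converse recovered by polarization (details deferred to the cited reference). Your handling of the off-diagonal moment identity and of the boundedness $\mu_f(\sigma)\Le\|f\|^2$ needed to make the polarized form sesquilinear is exactly the standard argument intended there.
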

    As an immediate consequence of Theorem \ref{2x} we get a
slight extension (no domain invariance required) of Proposition 18
in \cite{try4}. Though this very much wanted observation looks
trivially no direct way of getting it from the definition of
subnormality seems to be available. This is so because, unlike
normality, the definition of subnormality `exceeds the underlying
Hilbert space'. Here a kind of exception is Ando's construction of
the universal extension space in which the unitary equivalence can
be placed in. However, in the unbounded case this construction
does not look it to work at the full, cf. \cite{try3}.

   \begin{cor} \tlabel{t1.31.12}
   Let $S$ be an operator $\hhc$ let $\funk V \hhc{\hhc_1}$ be a
   bounded operator such that $V\gw VS=S$. If $S$ is subnormal in
   $\hhc$, then so is $VSV\gw$ in $\hhc_1$ provided it is densely
   defined. More exactly, if $F$ is a semispectral measure of $S$
   then $VF(\,\cdot\,)V\gw$ is such for $VSV\gw$.
   \end{cor}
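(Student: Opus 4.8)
The plan is to use the characterization of subnormality via semispectral measures provided by Theorem \ref{2x}, since it is the only criterion that imposes no constraint on the behaviour of domains --- precisely what is needed here because $VSV\gw$ may have a domain quite unrelated to that of $S$. Concretely, I would verify that if $F$ is a semispectral measure for $S$ satisfying \eqref{6.28.12}, then $F_1(\,\cdot\,)\okr VF(\,\cdot\,)V\gw$ is a semispectral measure for $T\okr VSV\gw$ on $\hhc_1$, and that it satisfies the analogue of \eqref{6.28.12}. That $F_1$ is again semispectral (positivity and countable additivity in the weak sense, with $F_1(\ccb)=VV\gw$ bounded) is immediate from the corresponding properties of $F$, since conjugation by the bounded operator $V$ preserves positivity and weak additivity.

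The heart of the matter is the moment identity. First I would record the algebraic relation the hypothesis $V\gw VS=S$ forces on powers: from $V\gw VS=S$ one gets, for $h,k$ in a suitable core of $T$, that $T^m h = V S^m V\gw h$ for $m=0,1$, so that the iterated composition does not accumulate unwanted $V\gw V$ factors. The key computation is then, for $m,n\in\{0,1\}$ and $h,g\in\dz T$,
\begin{equation*}
\is{T^m h}{T^n g}=\is{V S^m V\gw h}{V S^n V\gw g}=\is{S^m(V\gw h)}{S^n(V\gw g)}',
\end{equation*}
where the last inner product is taken in $\hhc$ after reducing $V\gw V$ against the $S$'s; applying \eqref{6.28.12} to the vectors $V\gw h, V\gw g\in\dz S$ turns this into $\int_\ccb z^m\sbar z^n\is{F(\D z)V\gw h}{V\gw g}=\int_\ccb z^m\sbar z^n\is{VF(\D z)V\gw h}{g}$, which is exactly \eqref{6.28.12} for $T$ with the measure $F_1$. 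Invoking Theorem \ref{2x} in the reverse direction then yields subnormality of $T=VSV\gw$, and the displayed passage simultaneously identifies $VF(\,\cdot\,)V\gw$ as its semispectral measure, giving the ``more exactly'' clause.

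The main obstacle I anticipate is \emph{domain bookkeeping} rather than the formal algebra. One must check that $V\gw h\in\dz S$ whenever $h\in\dz T$ (equivalently that $V\gw$ maps $\dz(VSV\gw)$ into $\dz S$), and that the relation $V\gw VS=S$ is strong enough to collapse the middle factors $V\gw V$ appearing when one expands $\is{T h}{T g}=\is{VSV\gw h}{VSV\gw g}$ --- here it is essential that $V\gw V$ sits on the correct side of $S$, which is precisely what the hypothesis $V\gw VS=S$ (as opposed to $SV\gw V=S$) secures. Because \eqref{6.28.12} is only asserted for $m,n\in\{0,1\}$, I would keep all manipulations within these low orders and never need the full invariant-domain machinery; this is exactly the ``no domain invariance required'' advantage alluded to just before the statement. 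Once densely-definedness of $VSV\gw$ is assumed (as in the hypothesis), these verifications are routine, so the only genuinely delicate point is ensuring the $V\gw V$ reduction is applied on vectors that genuinely lie in $\dz S$.
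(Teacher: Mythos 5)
This is precisely the paper's argument: Corollary \ref{t1.31.12} is stated there without proof, as an ``immediate consequence of Theorem \ref{2x}'', and the intended verification is exactly your transport $F\mapsto VF(\,\cdot\,)V\gw$ together with the collapse of the middle factor $V\gw V$ via $V\gw VS=S$, which settles the $(m,n)=(1,0),(0,1),(1,1)$ instances of \eqref{6.28.12} on the nose (and your domain worry is vacuous: $V\gw h\in\dz S$ for $h\in\dz{VSV\gw}$ is automatic from the definition of the domain of a composition, $V\gw$ being everywhere defined). The only step you assert without justification is $T^0h=VS^0V\gw h$, i.e.\ $VV\gw h=h$, which is exactly what the $(0,0)$ instance of \eqref{6.28.12} for $VF(\,\cdot\,)V\gw$ requires; this does \emph{not} follow from $V\gw VS=S$, since that hypothesis only forces $V\gw V=I$ on $\overline{\ob S}$ and says nothing about $VV\gw$ on $\hhc_1$ (take $S=0$ and $V=\frac12 I$). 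So for the ``more exactly'' clause one really needs $VV\gw=I_{\hhc_1}$ or some substitute; but this wrinkle is inherited from the statement as printed (it is harmless in the paper's applications, where $V$ is a coisometry such as the Bargmann transform) rather than a defect peculiar to your approach.
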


   \subsubsection*{Minimality and uniqueness} Minimality in the
unbounded case becomes a very sensitive issue. Let us start with a
definition: call $N$ {\em minimal of spectral type} if (M${}_1$)
on page \pageref{min} is satisfied. It turns out that, cf.
Proposition 1 in \cite{try3}, it is equivalent to (M${}_2$) in the
sense that $\ssc_{\dz S}=\kkc$. The sad news is that minimal
normal extensions of spectral type may not be
$\hhc$--equivalent\,\footnote{\;It is right time to give the
definition: two extensions $B_1$ and $B_2$ in spaces $\kkc_1$ and
$\kkc_2$ of $A$ in $\hhc$ are called $\hhc$--{\em equivalent} if
there is a unitary operator $\funk{U}{\kkc_1}{\kkc_2}$ such that
$U\kkc_1=\kkc_2U$ and $U\rres{\hhc}=I_\hhc$.}, see Example 1 in
\cite{try3} much further developed in \cite{deter}; therefore
\underbar{no} uniqueness can expected at this stage. The good news
is the welcomed spectral inclusion
   \begin{equation}\label{7.2.1}
  \spek{N}\subset\spek{S}
   \end{equation}
   is preserved; as a consequence of \eqref{7.2.1} notify $\spek
S\neq\nic$. A list of further spectral properties is in Theorem 1
of \cite{try3}.

The third kind of minimality appearing in (M${}_3$) though well
defined cannot be well developed in this general setting. It does
when $S$ has an invariant domain; we come to this latter on.

   \subsubsection*{Tightness of normal extensions}
   Assume for a little while $A$ in $\hhc$ and $B$ in $\kkc$ are
arbitrary operators, $\kkc$ contains isometrically $\hhc$. If
$A\subset B$ then
   \begin{equation} \label{2.2.1}
   \text{$\dz A\subset\dz B\cap\hhc$ and
$P\dz{B\gw}\subset\dz{A\gw}$ with $A\gw Px=PB\gw x$ for
$x\in\dz{B\gw}$}
   \end{equation}
   with $P$ being apparently the orthogonal projection of $\kkc$
onto $\hhc$. If $N$ is formally normal extension of $S$ then both
inclusions in \eqref{2.2.1} merge in one
   \begin{equation} \label{3.2.1}
   \dz S\subset\dz N\cap\hhc\subset \dz{N\gw}\cap\hhc\subset
P\dz{N\gw}\subset\dz{S\gw}.
   \end{equation}
   This implies immediately that
   \begin{equation*}
   \text{$\dz {S}\subset \dz{S\gw}$ and $\|S\gw f\|\Le\|Sf\|$ for
$f\in\dz S$}.
   \end{equation*}
   Hence $S$ is closable and $\dz{\sbar S}\subset \dz{S\gw}$; the
latter has to be compared with \eqref{1.18.1}.

Call the extension $N$ {\em tight} if $\dz{\sbar S}=\dz N\cap\hhc$
and {\em $*$-tight} if $P\dz{N\gw}=\dz{S\gw}$, cf. \cite{ccr}, the
topic was taken up in \cite{henki}. Notice that tight
extendibility was one of the condition involved in the definition
of subnormal operators given in \cite{sun}. It was proved in
\cite{ass} that symmetric and analytic Toeplitz operator
 have tight extension. The question in \cite{ass} asks if this is
always the case, which would give subnormality of \cite{sun} the
same meaning as ours. It turns out they two different notions
according to the example in \cite{ota2}. Therefore the preference
is the present one.

   \subsubsection*{Cartesian decomposition}
   If $A$ has $\dz A\cap\dz{A\gw}$ dense then
   \begin{equation*}
   \RE A\okr \frac 12(A+A\gw),\quad \IM A\okr \frac
   1{2\I}(A-A\gw),\quad \dz{\RE A}=\dz{\IM A}=\dz A\cap\dz{A\gw}
   \end{equation*}
   leads to the Cartesian decomposition of $A$
   \begin{equation*}
   A=\RE A+\I\IM A
   \end{equation*}
   with $\RE A$ and $\IM A$ symmetric on $\dz A\cap\dz{A\gw}$.
   \begin{pro}\tlabel{t1.1.1}
   A formally normal operator $N$ is essentially normal if and
only if the operators $\RE N$ and $\IM N$ are essentially
selfadjoint and spectrally commute, that is there spectral
measures commute. An operator $S$ is subnormal if an only if it
has such a formally normal extension.
   \end{pro}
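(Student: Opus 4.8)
The plan is to establish the Cartesian characterisation of essential normality first and then read off the statement about subnormal operators as a corollary. Write $A\okr\RE N$, $B\okr\IM N$ and let \textup{(ii)} abbreviate the condition that $A,B$ are essentially selfadjoint with spectrally commuting closures. Since $N$ is formally normal, \eqref{1.27.12} gives $\dz N\subseteq\dz{N\gw}$, so $\dz A=\dz B=\dz N\cap\dz{N\gw}=\dz N$ is dense and $A,B$ are the symmetric operators of the Cartesian decomposition $N=A+\I B$ on $\dz N$. Expanding the squared norms one gets $\|Nf\|^2=\|Af\|^2+\|Bf\|^2+2\IM\is{Af}{Bf}$ and the same with the opposite sign for $\|N\gw f\|^2$, so formal normality is precisely the identity $\IM\is{Af}{Bf}=0$ on $\dz N$; in particular $\|Nf\|^2=\|Af\|^2+\|Bf\|^2$ there.

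For ``essentially normal $\Rightarrow$ \textup{(ii)}'' put $M\okr\sbar N$ and let $E$ be its spectral measure from Spectral Theorem \ref{t1.25.11}. Set $X\okr\int\RE z\,E(\D z)$ and $Y\okr\int\IM z\,E(\D z)$; these are selfadjoint and, being spectral integrals of real functions of the single measure $E$, their spectral measures commute. A glance at \liczp2 gives $A\subseteq X$ and $B\subseteq Y$. Since the $M$-graph norm dominates the $X$-graph norm, the core $\dz N$ of $M$ is $X$-graph dense in $\dz M$; as $\dz M$ is in turn $X$-graph dense in $\dz X$ (truncate by $E(\{|z|\Le n\})$, cf.\ Corollary \ref{t3.3.1}) and $\dz N\subseteq\dz M\subseteq\dz X$, the subspace $\dz N$ is a core of $X$, so $\sbar A=X$; symmetrically $\sbar B=Y$, which is \textup{(ii)}.

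The substantial direction is ``\textup{(ii)} $\Rightarrow$ essentially normal''. Given $X\okr\sbar A$ and $Y\okr\sbar B$ selfadjoint with commuting spectral measures, their joint spectral measure $E$ on $\ccb$ produces the normal operator $M\okr\int z\,E(\D z)=X+\I Y$ with $\dz M=\dz X\cap\dz Y$, on which the graph norm of $M$ is $\|f\|^2+\|Xf\|^2+\|Yf\|^2$. From $A\subseteq X$, $B\subseteq Y$ one reads off $N\subseteq M$, hence $\sbar N\subseteq M$, and everything then rests on the single point that $\dz N$ is a \emph{core} of $M$, i.e.\ $\sbar N=M$: once this is known $\sbar N$ is normal, which is the assertion. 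I expect this to be the crux of the whole proposition. The obstacle is genuine: $\dz N$ is a core for $X$ and for $Y$ \emph{separately}, and for general commuting selfadjoint operators this does not force a core for the joint graph norm. The way I would push it through is to test the core criterion \eqref{3.26.11} for $M$ against the regularisations $u_t\okr(1+t(X^2+Y^2))^{-1}u$ of a putative $u\in\dz M$ annihilating $\dz N$ in the $M$-graph inner product. Here spectral commutativity is used decisively: $(1+t(X^2+Y^2))^{-1}$ commutes with $E$, hence with $X,Y,M$, so $u_t$ is a smooth vector for which $\is{Xf}{Xu_t}=\is{f}{X^2u_t}$ and $\is{Yf}{Yu_t}=\is{f}{Y^2u_t}$; the separate-core hypotheses then yield $(1+X^2+Y^2)u_t=0$, so $u_t=0$, and letting $t\downarrow0$ gives $u=0$. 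Equivalently one reduces to the dense set of bounded vectors of $M$, which lie in $\dz{X^2}\cap\dz{Y^2}$ and on which the same integration applies.

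Finally the subnormality statement follows formally. If $S$ is subnormal with normal extension $\hat N$ in some $\kkc\supseteq\hhc$, then $\hat N$ is in particular a formally normal, essentially normal extension whose real and imaginary parts are selfadjoint and spectrally commute by the direction already established, so $S$ has such an extension. Conversely, if $N$ is a formally normal extension of $S$ in some $\kkc$ with Cartesian parts essentially selfadjoint and spectrally commuting, then $N$ is essentially normal by the part just proved, so $\sbar N$ is normal in $\kkc$ and $S\subseteq N\subseteq\sbar N$ exhibits $\sbar N$ as a normal extension of $S$; thus $S$ is subnormal.
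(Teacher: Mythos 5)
Your overall architecture coincides with the paper's: the closures $X=\overline{\RE N}$ and $Y=\overline{\IM N}$ are assembled via their joint spectral measure into the normal operator $M=X+\I Y$, the forward direction and the subnormality statement are then read off (your treatment of those parts is correct and more detailed than the paper's ``the rest follows easily''), and everything hinges on the single inclusion $M\subset\sbar N$, i.e.\ on $\dz N$ being a core of $M$. The paper disposes of that step by the chain $N\subset\overline{\RE N}+\I\,\overline{\IM N}\subset\overline{\RE N+\I\IM N}=\sbar N$ followed by Proposition 8 (a normal operator contained in a formally normal one equals it); you rightly single out the middle inclusion as the crux. But your proposed proof of it is circular. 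To pass from the hypothesis $\is fu+\is{Mf}{Mu}=0$ for $f\in\dz N$ to the same relation for $u_t=(1+t(X^2+Y^2))^{-1}u$, you must move the resolvent onto the test vector: $\is f{u_t}+\is{Mf}{Mu_t}=\is{R_tf}{u}+\is{MR_tf}{Mu}$, and this vanishes only if $R_tf$ lies in $\dz{\sbar N}$ with $\sbar NR_tf=R_t\sbar Nf$ --- that is, only if $R_t$ commutes with $\sbar N$ and not merely with $M$, which is equivalent to $\sbar N=M$, the very thing being proved. The same objection applies to truncating by $E(\{|z|\Le n\})$.

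A symptom that the step cannot be closed this softly: after the (unjustified) transfer, your computation $\is f{u_t}+\is{Mf}{Mu_t}=\is f{(1+X^2+Y^2)u_t}$ uses only that $\dz N$ is dense in $\hhc$ and contained in $\dz M$; the hypotheses that $\dz N$ is a core of $X$ and of $Y$ never actually enter. If the argument were valid it would therefore show that \emph{every} dense subspace of $\dz M$ is a core of $M$, which is false (the $M$-graph-orthogonal complement in $\dz M$ of a vector in $\dz M\setminus\dz{M\gw M}$ is dense in $\hhc$ yet graph-closed and proper). So the decisive implication ``$\dz N$ is a core of $X$ and of $Y$ separately $\implies$ $\dz N$ is a core of $X+\I Y$'' is left unproved in your text; it is exactly the content the paper compresses into the inclusion $\overline{\RE N}+\I\,\overline{\IM N}\subset\overline{\RE N+\I\IM N}$, and a complete argument must use those separate-core hypotheses (and the interplay with formal normality of $N$) in an essential way rather than only the density of $\dz N$ in $\hhc$.
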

   \begin{proof}
   If $N$ is formally normal and $\overline{\RE N}$ and
$\overline{\IM N}$ commute spectrally then $\overline{\RE
   N}+\I\overline{\IM N}$ is normal. Furthermore,
   $N\subset\overline{\RE N}+\I\overline{\IM N}\subset
   \overline{\RE N+\I\IM N}=\sbar N$ and Proposition \ref{t1.1.1}
   concludes with $N$ to be essentially normal. The rest follows
   easily.
   \end{proof}
   This is a parallel to Theorem \ref{2x}. It show that famous
Nelson's example from \cite{nel} can be adopted as an alternative
one to Coddington's \cite{cod}.

   \subsubsection*{Polar decomposition and quasinormal operators}
    For a closed densely defined operator $A: \hhc \supset \dz A
\longrightarrow \kkc$ there exists a unique partial isometry $U
\in \bbs (\hhc,\kkc)$ such that $\jd U = \jd A$ and $A=U|A|$,
where {$|A| \okr (A^* A)^{1/2}$}; $U|A|$ is called the {\it {polar
decomposition}} of $A$ (cf.\ \cite[p.\ 197]{wei} ). If $U|A|$ is
the polar decomposition of $A$, then $\overline{\ob{|A|}} =
\overline{\ob{A^*}}$ is the initial space of $U$ and
$\overline{\ob{A}}$ is the final space of $U$.

   One of the equivalent definitions of {\em quasinormal}
operators says they are those for which in their polar
decomposition $U|A|=|A|U$. These operators are
\underbar{subnormal} (cf. \cite[Theorem 2]{try2}), even more, they
have a kind of Wold-von Neumann decomposition, see \cite{brown}
for bounded operators and its version adapted to the unbounded
case \cite{book_sub}. In a sense they become an intermediate step
between subnormal and normal operators. Normal operators are those
quasinormals for which $\jd{A\gw}\subset\jd A$.

Because $\jd N=\jd{N\gw}$ for a normal $N$, both factor in its
polar decomposition can be extended properly so as to get the
following result.
   \begin{pro}\tlabel{t3.1.1}
   $N$ is normal if and only if $N=UP$ with $U$ unitary and $P$ a
positive operator, $U$ and $P$ commuting. This decomposition is
not unique.
   \end{pro}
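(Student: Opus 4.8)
The plan is to build the required factorization on top of the polar decomposition $N=U|N|$ recalled just above, setting $P\okr|N|=(N\gw N)^{1/2}$ and repairing the only defect, namely that the partial isometry $U$ need not be unitary. First I would record that for normal $N$ the two kernels agree, $\jd N=\jd{N\gw}$, so in the description of the polar decomposition the initial space $\overline{\ob{|N|}}=\overline{\ob{N\gw}}=(\jd N)^\perp$ and the final space $\overline{\ob N}=(\jd{N\gw})^\perp=(\jd N)^\perp$ coincide. Thus $U$ is isometric from $(\jd N)^\perp$ \emph{onto} $(\jd N)^\perp$ and vanishes on $\jd N$. Moreover a normal operator is in particular quasinormal (it is one of those quasinormals with $\jd{N\gw}\subset\jd N$, as quoted above), so its polar factors commute, $UP=PU$.

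Next I would upgrade $U$ to a unitary without disturbing these relations: pick any unitary $V$ of $\jd N$ onto itself, extended by $0$ on $(\jd N)^\perp$, and set $\widetilde U\okr U+V$. Since $U$ and $V$ act unitarily on the two orthogonal summands $(\jd N)^\perp$ and $\jd N$ of $\hhc$, the operator $\widetilde U$ is unitary. The factorization is unchanged, $\widetilde U P=UP+VP=UP=N$, because $\ob P\subset(\jd N)^\perp$ is annihilated by $V$ (equivalently $P$ vanishes on $\jd N=\jd P$, so $VP=0$); and commutation survives, since $U$ commutes with $P$ by quasinormality while $VP=0=PV$ (both $V$ and $\jd P$ live inside $\jd N$, on which $P$ is zero), whence $\widetilde U P=PU=P\widetilde U$. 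For the \emph{non}-uniqueness I would observe conversely that $P$ is forced ($N\gw N=P'\,W\gw W\,P'=P'^2$ for any such decomposition $N=WP'$, so $P'=|N|=P$) and $W$ is pinned down on $(\jd N)^\perp$ but is a completely free unitary of $\jd N$; hence whenever $\jd N\neq\{0\}$ the choice of $V$ yields genuinely distinct decompositions.

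For the converse I would argue straight from the spatial criterion \eqref{2.19.11}. Assuming $N=UP$ with $U$ unitary, $P\Ge0$ and $U$ commuting with $P$, one has $\dz N=\dz P$ because $U$ is bounded; taking adjoints of the commutation relation gives $N\gw=PU\gw=U\gw P$ on $\dz P$, so $\dz{N\gw}=\dz P=\dz N$. Unitarity of $U$ then yields $\|Nf\|=\|Pf\|=\|N\gw f\|$ for every $f\in\dz P$, and $N=UP$ is closed ($U$ is bounded and boundedly invertible, $P$ is closed), so \eqref{2.19.11} delivers normality.

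The step I expect to be the main obstacle is purely the bookkeeping with unbounded domains: fixing the precise sense in which the unbounded positive $P$ \emph{commutes} with the bounded $U$ (commutation with the spectral resolution of $P$, so that $U\dz P=\dz P$ and $U\gw\dz P=\dz P$), and checking that the cosmetic surgery $U\mapsto U+V$ preserves this on the nose — which it does, since $V$ and $V\gw$ both leave $\dz P$ invariant and the added part lives on $\jd P$ where $P$ is zero. Everything else — the equality of initial and final spaces, the quasinormality of normal operators, and the adjoint computation — is either quoted from the material above or a one-line verification.
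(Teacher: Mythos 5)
Your argument is correct and follows exactly the route the paper indicates (it offers no formal proof, only the preceding remark that since $\jd N=\jd{N\gw}$ both factors of the polar decomposition ``can be extended properly''): you take $P=|N|$, use quasinormality for the commutation, and enlarge the partial isometry to a unitary by an arbitrary unitary of $\jd N$, which also accounts for the non-uniqueness. The converse via \eqref{2.19.11} and the domain bookkeeping you flag are the right one-line verifications; nothing is missing.
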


    \subsubsection*{Old friends in the new environment} Because
selfadjoint operators are apparently normal, symmetric operators
are both formally normal and subnormal. The following draft shows
how all the notions interplay; all the inclusions
may\,\footnote{\;In the finite dimensional case subnormals,
formally normals and normals coincide.} become proper. Notice the
formally normal are somehow apart, formally normal operators may
not be normal, see \cite{cod} for an explicit example.
    \vspace{10pt} {\begin{center} $\begin{array}{ccccc}
    {\textit{selfadjoint}}&{\subset}&{ \textit{symmetric}}&
    {\subset} & {{\textit{formally normal}}}
   \\ {\cap} & {}&  {\cap}&{}&{}
   \\
   {\text{normal}} & \subset& {\textbf{subnormal}} & {} &{}
   \\  \cap &{}&{}&{}&{}
   \\
   {\textit{formally normal}}&{}&{}&{}&{}
   \end{array}$
    \end{center}}
    \vspace{10pt} Let us mention that Coddington characterizes in
\cite{cobir,codd} those formally normal operators which are
subnormal.

   \subsection*{Subnormality of operators with invariant domain}
   From now \underbar{onwards} we declare \vspace{2pt}
   \begin{center} \fbox{\;\,$S\dz S\subset\dz S.$}\end{center}
\vspace{5pt}
   This means we have to resign the temptation to consider an
operator $S$ to be closed unless we want deliberately exclude
operators which not bounded.

   Under these circumstances we have supplementing results to
   Theorems \ref{2x} and \ref{t2.30.12} at once.
   \begin{thm} \tlabel{t2xs}
   If $S$ is subnormal and $F$ is a semispectral measure such that
   \eqref{6.28.12} holds then \eqref{6.28.12} holds for all $m,n$,
   that is
   \begin{equation*}
    \is{S^mf}{S^nf}=\int_{\mathbb C}z^m\sbar z^n\is{F(\D
    z)f}{f},\quad m,n=0,1,\ldots \quad f\in\dz S.
    \end{equation*}
   Alternatively, the elementary spectral measures of $S$ satisfy
   \begin{equation}\label{2.17.1}
    \is{S^mf}{S^nf}=\int_{\mathbb C}z^m\sbar z^n\mu_{f}(\D
    z),\quad m,n=0,1,\ldots \quad f\in\dz S.
    \end{equation}
   \end{thm}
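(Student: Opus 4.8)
The plan is to realize the given semispectral measure $F$ as the compression of a genuine spectral measure, thereby producing a concrete normal extension of $S$ whose spectral measure is that dilation; once $S$ sits inside such a normal $N$, the higher moments are simply read off from the Spectral Theorem. Concretely, I would first apply Naimark's dilation theorem (\cite{nai}) to $F$, which is normalized since the case $m=n=0$ in \eqref{6.28.12} forces $F(\ccb)=I$. This yields a Hilbert space $\kkc$ containing $\hhc$ isometrically, with orthogonal projection $P$ of $\kkc$ onto $\hhc$, and a spectral measure $E$ on $\ccb$ with $F(\sigma)=PE(\sigma)\rres\hhc$ for every Borel $\sigma$; the feature to keep in mind is that $\is{E(\sigma)f}{g}=\is{F(\sigma)f}{g}$ whenever $g\in\hhc$. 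Put $N\okr\int_\ccb z\,E(\D z)$, the normal operator on $\kkc$ having $E$ as its spectral measure (Theorem \ref{t1.25.11}).

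The key step is to verify $S\subset N$. Fix $f\in\dz S$. The case $m=n=1$ of \eqref{6.28.12} gives $\int_\ccb|z|^2\is{E(\D z)f}{f}=\int_\ccb|z|^2\is{F(\D z)f}{f}=\|Sf\|^2<\infty$, so $f\in\dz N$ and, by \eqref{1.9.5.7}, $\|Nf\|=\|Sf\|$. On the other hand, the case $m=1$, $n=0$ of \eqref{6.28.12} yields $\is{Nf}{g}=\int_\ccb z\,\is{E(\D z)f}{g}=\int_\ccb z\,\is{F(\D z)f}{g}=\is{Sf}{g}$ for every $g\in\hhc$, that is $PNf=Sf$ and hence $\|PNf\|=\|Sf\|=\|Nf\|$. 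Since $P$ is an orthogonal projection, this equality of norms leaves no room for a component of $Nf$ orthogonal to $\hhc$, so $Nf=PNf=Sf$. Thus $S\subset N$ and $N$ is a normal extension of $S$.

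Now the standing hypothesis $S\dz S\subset\dz S$ takes over. Because $S\subset N$ forces $\dz S\subset\dz N$ and each $S^kf$ stays in $\dz S$, an immediate induction (using $N(S^kf)=S(S^kf)=S^{k+1}f$, which is the previous paragraph applied to the vector $S^kf\in\dz S$) shows $N^kf=S^kf$ for all $f\in\dz S$ and all $k=0,1,\dots$; in particular every $f\in\dz S$ lies in $\ddc^\infty(N)$. Consequently, for $f,g\in\dz S$ and arbitrary $m,n$ the Spectral Theorem for the normal operator $N$ gives
\[
\is{S^mf}{S^ng}=\is{N^mf}{N^ng}=\int_\ccb z^m\sbar z^n\is{E(\D z)f}{g}=\int_\ccb z^m\sbar z^n\is{F(\D z)f}{g},
\]
the last equality holding because $g\in\hhc$. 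Setting $g=f$ delivers the formula for $F$; since the elementary spectral measures are recovered as $\mu_f(\,\cdot\,)=\is{F(\,\cdot\,)f}{f}$, the companion statement \eqref{2.17.1} follows at once.

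The step I expect to be the real obstacle is the implication $PNf=Sf\Rightarrow Nf=Sf$: it is exactly the norm identity $\|Nf\|=\|Sf\|$---itself nothing but the $m=n=1$ instance of \eqref{6.28.12}---that upgrades the Naimark dilation from a mere dilation to an honest extension. Everything after that is domain bookkeeping, rendered harmless by the assumption $S\dz S\subset\dz S$, which is precisely what secures the agreement of $N^k$ and $S^k$ on $\dz S$ for every $k$ and hence the passage from the two lowest moments to all of them.
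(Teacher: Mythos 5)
Your proof is correct. The paper states Theorem \ref{t2xs} without an explicit proof, offering it as an immediate supplement to Theorem \ref{2x}, and your argument --- Naimark-dilate $F$ to a spectral measure $E$, use the $m=n=1$ norm identity $\|Nf\|=\|Sf\|=\|PNf\|$ to upgrade the dilation $N=\int_\ccb z\,E(\D z)$ to an honest normal extension of $S$, then invoke the standing invariance $S\dz S\subset\dz S$ to get $N^kf=S^kf$ and read off all moments from the spectral theorem --- is precisely the standard route the surrounding text (which explicitly appeals to Naimark's theorem for producing normal extensions from semispectral measures) intends.
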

    \subsubsection*{Back to Halmos' positive definiteness
or what has survived from Bram's theorem}
   Under the current circumstances Halmos' positive definiteness
takes the form
   \begin{equation}
   \sum_{m,n}\is{S^mf_n}{S^nf_m}\Ge 0, \quad \text{for any finite
sequence $(f_k)_k\subset\dz S$}. \tag{{\tt PD}}
   \end{equation}
   What we still have in the flavour of Bram's characterization is
in the following, see \cite{try2} or \cite{nag_ext} for another
techniques of building the proof up.
   \begin{thm} \tlabel{t2.3.1}
   An operator $S$ in $\hhc$ satisfies {\rm(}{\tt PD}{\rm)} if and
only if there is a Hilbert space $\kkc$ containing $\hhc$
isometrically, and a formally normal operator $N$ in $\kkc$ such
that $S\subset N$ as well as
   \begin{equation}\label{2.7.1}
    \text{$N\dz N\subset \dz N$ and $N\gw\dz{ N}\subset \dz {N }$}
   \end{equation}
  If this happens, $N$ can be chosen to satisfy
   \begin{equation}\label{1.6.1}
   \dz N=\lin\zb{N\gw{}^kf}{k=0,1,\ldots,\;f\in\dz S}.
   \end{equation}

   \end{thm}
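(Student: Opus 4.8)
The plan is to prove the two implications separately, with necessity of ({\tt PD}) reduced to a sum-of-squares identity and sufficiency handled by an explicit Kolmogorov--type construction. For necessity, suppose $N$ is as described. First I would observe that the invariance conditions \eqref{2.7.1} together with formal normality force $N$ and $N\gw$ to commute on $\dz N$: polarizing $\|Nf\|=\|N\gw f\|$ gives $\is{Nf}{Ng}=\is{N\gw f}{N\gw g}$, and rewriting each side as $\is{N\gw Nf}{g}$ and $\is{NN\gw f}{g}$ (legitimate since $N\dz N\subset\dz N\subset\dz{N\gw}$ and $N\gw\dz N\subset\dz N$) yields $N\gw Nf=NN\gw f$ for $f\in\dz N$. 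On $\dz N$ one also has both adjoint relations $\is{Nf}{g}=\is{f}{N\gw g}$ and $\is{N\gw f}{g}=\is{f}{Ng}$. Using these with the commutation I would verify the term-by-term identity $\is{N\gw{}^n f}{N\gw{}^m g}=\is{N^m f}{N^n g}$ (move all stars to the left by the adjoint relations, commute $N^m N\gw{}^n=N\gw{}^n N^m$, then move them back to the right). Since $S\subset N$ and $S\dz S\subset\dz S$ give $S^k f=N^k f$ for $f\in\dz S$, summing produces
\[
\sum_{m,n}\is{S^m f_n}{S^n f_m}=\Bigl\|\sum_n N\gw{}^n f_n\Bigr\|^2\Ge0,
\]
which is exactly ({\tt PD}).

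For sufficiency, assume ({\tt PD}). I would build $\kkc$ from the space $\eec_0$ of finitely supported sequences $(f_k)_{k\Ge0}$ with $f_k\in\dz S$, each read as the formal vector $\sum_k N\gw{}^k f_k$, equipped with the sesquilinear form
\[
[(f_k),(g_l)]\okr\sum_{k,l}\is{S^l f_k}{S^k g_l}.
\]
Relabelling indices shows $[(f_k),(f_k)]=\sum_{m,n}\is{S^m f_n}{S^n f_m}$, so ({\tt PD}) is precisely positive semidefiniteness of this form; I would then pass to the quotient by its radical and complete to obtain $\kkc$. The map $f\mapsto[(f,0,0,\dots)]$ embeds $\dz S$, and the slot-zero computation $[(f,0,\dots),(g,0,\dots)]=\is fg$ shows it is isometric, so $\hhc=\sbar{\dz S}$ sits isometrically in $\kkc$.

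On the dense domain $\ddc$ (the image of $\eec_0$) I would define $N$ by applying $S$ slotwise, $N[(f_k)]=[(Sf_k)]$ — landing in $\eec_0$ because $S\dz S\subset\dz S$ — and the shift $M[(f_k)]=[(f_{k-1})]$ (with $f_{-1}=0$), representing multiplication by $N\gw$. Then $N$ extends $S$ on $[(f,0,\dots)]$, and both invariance conditions \eqref{2.7.1} are immediate since $N$ and $M$ map $\eec_0$ into itself. The two facts that make $N$ work are pure index shifts: the formal adjoint relation $[Nx,z]=[x,Mz]$ and the norm identity $[Nx,Nx]=[Mx,Mx]$. The former gives $\ddc\subset\dz{N\gw}$ with $N\gw$ agreeing with $M$ on $\ddc$ (hence $N\gw\ddc\subset\ddc$), and the latter gives $\|Nh\|=\|N\gw h\|$ on $\ddc$; together these say $N$ is formally normal. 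Finally, since the class with $f$ in slot $k$ is $M^k[(f,0,\dots)]=N\gw{}^k f$, the domain $\ddc$ equals $\lin\zb{N\gw{}^k f}{k=0,1,\dots,\ f\in\dz S}$, which is \eqref{1.6.1}.

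The step I expect to be the main obstacle is the passage to the quotient: I must check that $N$ and $M$ descend to well-defined operators on $\kkc$, i.e.\ that they map the radical $\zb{x}{[x,x]=0}$ into itself. This is where the formal adjoint relation does the real work — for $x$ in the radical, Cauchy--Schwarz for the semidefinite form gives $[x,z]=0$ for all $z$, whence $[Nx,z]=[x,Mz]=0$ and $[Mx,z]=[x,Nz]=0$, placing $Nx$ and $Mx$ back in the radical. One must also keep the Hilbert-space adjoint $N\gw$ distinct from the formal adjoint $M$ (only $M\subset N\gw$ is needed, and exactly on $\ddc$), and note that no closedness of $N$ is claimed, so the construction legitimately stops at the algebraic domain $\ddc$.
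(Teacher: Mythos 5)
Your proof is correct, and since the paper itself gives no argument for this theorem (it only points to \cite{try2} and \cite{nag_ext}), your Kolmogorov-type construction on finitely supported $\dz S$-valued sequences, read as formal vectors $\sum_k N\gw{}^k f_k$, is essentially the standard Sz.-Nagy-style argument those references employ. The delicate points — Hermitian symmetry and positive semidefiniteness of the form via ({\tt PD}), invariance of the radical through the formal adjoint identity $[Nx,z]=[x,Mz]$, and the distinction between the shift $M$ and the genuine Hilbert-space adjoint $N\gw$ (only $M\subset N\gw$ on $\ddc$ being needed) — are all handled correctly, as is the use of the standing hypothesis $S\dz S\subset\dz S$ in both directions.
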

   \begin{rem} \tlabel{t10.10.1}
   Suppose $S$ and $N$ are as in {\rm Theorem \ref{t2.3.1}}. If
   $S$ is cyclic with a cyclic vector $e$ then $N$ is $*$--cyclic
with the same vector $e$. 
   Indeed, if $S$ is cyclic with a cyclic vector $e$ then, by
   \eqref{1.6.1},
$$\dz N=\lin\zb{N\gw{}^kN^le}{k,l=0,1,\ldots}$$ and the first
conclusion follows.
   \end{rem}

   %

   \begin{cor} \tlabel{t1.4.1}
   If $S$ is subnormal then it satisfies {\rm(}{\tt PD}{\rm)}.
   \end{cor}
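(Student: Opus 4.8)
The plan is to pull the positive-definiteness of $S$ back to its normal extension and then settle the matter on the normal side. By the definition of subnormality there is a normal operator $N$ in a Hilbert space $\kkc\supset\hhc$ with $S\subset N$. Thanks to the standing assumption $S\dz S\subset\dz S$, a one-line induction gives $S^kf=N^kf$ for every $f\in\dz S$ and every $k$; in particular each such $f$ belongs to $\ddc^\infty(N)$. Hence for any finite family $(f_k)_k\subset\dz S$ one has
\[
\sum_{m,n}\is{S^mf_n}{S^nf_m}=\sum_{m,n}\is{N^mf_n}{N^nf_m},
\]
and it suffices to prove that the right-hand double sum is $\Ge0$. (The diagonal relation of Theorem \ref{t2xs}, valid for a single $f$, does not by itself reach this, since {\rm(}{\tt PD}{\rm)} carries the cross terms $\is{S^mf_n}{S^nf_m}$; this is why I would argue through $N$ directly.)

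For the normal operator $N$ I would exhibit this sum as an honest square of a norm. Using the spectral measure $E$ from Spectral Theorem \ref{t1.25.11} together with $\dz N=\dz{N\gw}$ from \eqref{2.19.11} (and $|z|=|\sbar z|$), every $f\in\ddc^\infty(N)$ lies in $\dz{N\gw{}^n}$ and in the domain of each mixed power, so $\sum_nN\gw{}^nf_n$ is a well-defined vector. Moving each $N^n$ to the other side of the inner product and commuting $N$ past $N\gw$ through the spectral calculus, one obtains
\[
\sum_{m,n}\is{N^mf_n}{N^nf_m}=\Bigl\|\sum_nN\gw{}^nf_n\Bigr\|^2\Ge0.
\]
Equivalently, writing each term as $\int_\ccb z^m\sbar z^n\is{E(\D z)f_n}{f_m}$ turns the claim into nonnegativity of the integrand $\sum_{m,n}z^m\sbar z^n\is{E(\D z)f_n}{f_m}$, which holds since the matrix $\bigl[\is{E(\sigma)f_n}{f_m}\bigr]_{m,n}=\bigl[\is{E(\sigma)f_n}{E(\sigma)f_m}\bigr]_{m,n}$ is positive semidefinite for every Borel set $\sigma$ and is here evaluated on the vector $(z^m)_m$.

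The only genuine work — the point at which the bounded-case triviality must be upgraded — is the justification of these unbounded manipulations: that $N\gw{}^nf_n$ and every product $N^mN\gw{}^nf_n$ are defined, and that $N^mN\gw{}^n=N\gw{}^nN^m$ on the vectors at hand. I expect this domain bookkeeping to be the main, and essentially the only, obstacle, and the spectral theorem disposes of it: for $f\in\ddc^\infty(N)$ the finiteness $\int_\ccb|z|^{2k}\is{E(\D z)f}{f}<\infty$ for all $k$ lets one realize each mixed power as the spectral integral $\int_\ccb z^m\sbar z^nE(\D z)f$, so that all the reorderings are legitimate. Once this is secured the desired inequality, namely {\rm(}{\tt PD}{\rm)}, follows.
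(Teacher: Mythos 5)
Your proposal is correct, and it takes the route the paper itself intends: Corollary \ref{t1.4.1} is stated without proof, and the argument tacitly relied upon is exactly the one announced in the bounded-case discussion (``an immediate consequence of normality of $N$''), namely pushing the sum onto the normal extension via $S^kf=N^kf$ and exhibiting it as $\|\sum_nN\gw{}^nf_n\|^2\Ge0$ through the spectral calculus. Your attention to the domain bookkeeping ($\dz S\subset\ddc^\infty(N)=\ddc^\infty(N,N\gw)$ and $\dz{N^n}=\dz{N\gw{}^n}$ via \liczp2 of Theorem \ref{t1.25.11}) is precisely the upgrade the unbounded setting requires, so nothing is missing.
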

   \begin{cor} \tlabel{t1x.8.1}
   $N$ in {\rm Theorem \ref{t2.3.1}} is essentially normal if and
only if
   \begin{equation*}
  x\in\dz {N\gw}\;\;\&\;\; \is xy+\is{x}{N\gw Ny}=0\;\; \forall
\;\;y\in\dz{N}\;\implies\;x=0.
   \end{equation*}
   \end{cor}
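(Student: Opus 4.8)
The plan is to reduce essential normality of $N$ to the single assertion that $\dz N$ is a core of $N\gw$, and then to convert the core criterion \eqref{3.26.11} into the stated condition using the invariance hypotheses \eqref{2.7.1} of Theorem \ref{t2.3.1}.

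First I would record that $\sbar N$ is formally normal with $\dz{\sbar N}\subset\dz{N\gw}$ by \eqref{1.18.1}, and that $(\sbar N)\gw=N\gw$. Since formal normality gives $\|Nf\|=\|N\gw f\|$ for $f\in\dz N$, the graph norms of $N$ and of $N\gw$ agree on $\dz N$; hence a sequence in $\dz N$ is $N$-graph Cauchy exactly when it is $N\gw$-graph Cauchy, so the $N$-graph closure of $\dz N$, which is $\dz{\sbar N}$, coincides as a subset of $\kkc$ with the domain of $\overline{N\gw\res{\dz N}}$. As $\sbar N$ is already formally normal, it is normal if and only if $\dz{\sbar N}=\dz{N\gw}$, and by the previous sentence this holds precisely when $\dz N$ is a core of $N\gw$.

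Next I would apply \eqref{3.26.11} with $A=N\gw$ and $\ddc=\dz N$: $\dz N$ is a core of $N\gw$ iff the only $x\in\dz{N\gw}$ satisfying $\is x y+\is{N\gw x}{N\gw y}=0$ for every $y\in\dz N$ is $x=0$. It then remains to identify $\is{N\gw x}{N\gw y}$ with $\is{x}{N\gw N y}$. Here \eqref{2.7.1} is essential: for $y\in\dz N$ one has $N\gw y\in\dz N$, so the defining relation of the adjoint yields $\is{N\gw x}{N\gw y}=\is{x}{NN\gw y}$. The one genuine computation is the commutation $NN\gw=N\gw N$ on $\dz N$: testing against $z\in\dz N$ and using \eqref{2.7.1} together with the adjoint relation gives $\is{NN\gw y}{z}=\is{N\gw y}{N\gw z}$ and $\is{N\gw N y}{z}=\is{Ny}{Nz}$, while polarizing $\|Nf\|=\|N\gw f\|$ on $\dz N$ gives $\is{N\gw y}{N\gw z}=\is{Ny}{Nz}$; density of $\dz N$ in $\kkc$ then forces $NN\gw y=N\gw N y$. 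Substituting produces precisely the displayed condition, so all three equivalences chain together.

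The main obstacle is this commutation step: although $N$ is merely formally normal, the two invariant-domain hypotheses of Theorem \ref{t2.3.1} combined with polarized formal normality make $N$ and $N\gw$ commute on $\dz N$, and it is exactly this that turns the symmetric form $\is{N\gw x}{N\gw y}$ coming out of \eqref{3.26.11} into the asymmetric $\is{x}{N\gw N y}$ appearing in the statement. A secondary point needing care is the identification, in the first step, of the $N$-graph and $N\gw$-graph closures of $\dz N$.
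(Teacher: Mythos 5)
Your proposal is correct and follows essentially the same route as the paper, whose entire proof is the two steps you carry out: reduce essential normality of $N$ to $\dz N$ being a core of $N\gw$, then apply the core criterion \eqref{3.26.11} together with the invariance hypotheses \eqref{2.7.1}. The polarization argument giving $NN\gw y=N\gw Ny$ on $\dz N$ merely fills in a detail the paper leaves implicit.
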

   \begin{proof}
   Notice first that $N$ is essentially normal if and only if $\dz
N$ is a core of $N\gw$. Now use \eqref{3.26.11} and \eqref{2.7.1}.
   \end{proof}

   We separate the uniqueness result because of its importance.
   \begin{thm} \tlabel{t2.6.1}
   If two pairs $(N_1,\kkc_1)$ and $(N_2,\kkc_2)$ satisfy the
conclusion of {\rm Theorem \ref{t2.3.1}} then they are
$\hhc$-equivalent, that is there is a unitary operator between
$\kkc_1$ and $\kkc_2$ such that $U\rres{\hhc}=I_\hhc$ and
$UN_1=N_2U$.
   \end{thm}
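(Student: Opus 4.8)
The theorem asserts a uniqueness-up-to-$\hhc$-equivalence result for the canonical formally normal extensions produced in Theorem~\ref{t2.3.1}. The key structural fact I would exploit is the explicit description of the domain in \eqref{1.6.1}: each $N_i$ lives on $\kkc_i$ with
\begin{equation*}
\dz{N_i}=\lin\zb{N_i\gw{}^kf}{k=0,1,\ldots,\;f\in\dz S},
\end{equation*}
and $S\subset N_i$. The plan is to build the unitary $U$ directly on these spanning sets and then check it extends to all of $\kkc_1$ and $\kkc_2$. The natural candidate is the densely defined map sending $N_1\gw{}^kf\mapsto N_2\gw{}^kf$ for $f\in\dz S$, extended linearly; the whole game is to prove this is well defined, isometric with dense range, intertwines $N_1$ with $N_2$, and fixes $\hhc$.

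\textbf{Reducing well-definedness and isometry to a common Gram matrix.}
First I would establish that the inner products $\is{N_i\gw{}^kf}{N_i\gw{}^lg}$ for $f,g\in\dz S$ do \emph{not} depend on $i$. This is where formal normality and the invariance \eqref{2.7.1} do the work: using $\|N_if\|=\|N_i\gw f\|$ together with $S\subset N_i$, one pushes every adjoint across inner products and reduces each such quantity to moments of the form $\is{S^mf}{S^ng}$ (equivalently, integrals against the elementary spectral measures from Theorem~\ref{t2xs}, which by hypothesis agree for both extensions since they reproduce the same $S$). Concretely, $\is{N_i\gw{}^kf}{N_i\gw{}^lg}$ collapses, after repeatedly commuting $N_i$ and $N_i\gw$ on the invariant domain and using $N_if=Sf$ on $\hhc$, to an expression built solely from $S$-data on $\hhc$. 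Once that common-Gram-matrix identity is in hand, well-definedness and isometry of $U$ on the dense linear span are simultaneous: a finite combination $\sum_{k}N_1\gw{}^kf_k$ vanishes iff its squared norm (a Gram sum) vanishes, and that Gram sum is the same for index $2$, so $U$ maps zero to zero and preserves norms.

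\textbf{Intertwining, fixing $\hhc$, and closing up.}
With $U$ isometric on a dense subspace of $\kkc_1$ onto a dense subspace of $\kkc_2$ (density on both sides being exactly minimality encoded in \eqref{1.6.1}), I extend $U$ by continuity to a unitary $\funk{U}{\kkc_1}{\kkc_2}$. That $U\rres\hhc=I_\hhc$ is immediate from the $k=0$ slice: $U f=f$ for $f\in\dz S$, and $\dz S$ is dense in $\hhc$. For the intertwining $UN_1=N_2U$, I check $UN_1\gw=N_2\gw U$ on the spanning vectors, where $N_i\gw$ merely raises the exponent $k\mapsto k+1$, so the relation is built into the definition of $U$; then one passes to $N_i$ itself. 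Since the spanning set is a core (indeed the full domain as given), the operator identity follows on all of $\dz{N_1}$.

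\textbf{The main obstacle.}
The delicate point is the reduction to a \emph{common} Gram matrix: I must verify that $\is{N_i\gw{}^kf}{N_i\gw{}^lg}$ is genuinely an invariant of $S$ alone and not of the particular extension. The subtlety is that $N_i$ is only \emph{formally} normal, so one has $\|N_if\|=\|N_i\gw f\|$ and $\dz{N_i}\subset\dz{N_i\gw}$ but not the clean commutation $N_iN_i\gw=N_i\gw N_i$ that full normality would grant. I would therefore have to argue carefully that, on the invariant domain \eqref{2.7.1}, the relevant rearrangements only ever require moving $N_i$ onto vectors of $\dz S$ where $N_i$ restricts to $S$, so that each inner product telescopes down to $S$-moments before any illegitimate commutation is needed. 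Getting this bookkeeping right—verifying that every adjoint transfer lands inside $\dz{N_i}$ and that no step secretly assumes normality of $N_i$—is the crux; everything after it is the routine extend-by-continuity machinery.
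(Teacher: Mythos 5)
Your proposal is correct and follows essentially the same route as the paper: the paper's proof likewise defines $U$ on the span $\lin\zb{N_i\gw{}^kf}{k\Ge0,\ f\in\dz S}$ from \eqref{1.6.1} and verifies isometry by collapsing $\|\sum_nN_i\gw{}^nf_n\|_i^2$ to the common quantity $\sum_{k,l}\is{S^kf_l}{S^lf_k}$, then extends by continuity. The ``crux'' you flag is handled exactly as you suggest: formal normality plus the invariance \eqref{2.7.1} give $\is{N_ix}{N_iy}=\is{N_i\gw x}{N_i\gw y}$ by polarization, which suffices for the telescoping without ever invoking genuine normality.
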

   \begin{proof}
   For $(f_k)_k\subset\dz S$ we have, due to \eqref{1.6.1},
   \begin{multline*}
\|\sum_nN_1\gw{}^nf_n\|_1^2=\sum_{k,l}\is{N_1^kf_l}{N_1^lf_k}_1=
\sum_{k,l}
\is{S^kf_l}{S^lf_k}\\=\sum_{k,l}\is{N_2^kf_l}{N_2^lf_k}_2=
\|\sum_nN_2\gw{}^nf_n\|_2^2
\end{multline*}
   which establishes the unitary operator between two dense
subspaces. The next step is standard as well.
   \end{proof}
   \begin{cor} \tlabel{t3.18.1}
   Suppose $S$ is subnormal in $\hhc$. If $\widetilde N$ is any
normal extension of $S$ and $ N$ is a formally normal extension of
$S$ as in {\rm Theorem \ref{t2.3.1}}, for which \eqref{1.6.1}
holds, then there is a formally normal operator $N_1$ which is
$\hhc$-equivalent to $N$ and such that $S\subset N_1\subset
\widetilde N$.
   \end{cor}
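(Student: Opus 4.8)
The plan is to realize the wanted copy of $N$ as a genuine restriction of the given normal extension $\widetilde N$, acting in a subspace of the extension space, and then to quote the uniqueness Theorem \ref{t2.6.1}. Modelling \eqref{1.6.1} \emph{inside} the space $\kkc$ of $\widetilde N$, I would set
\[
\ddc_1\okr\lin\zb{\widetilde N\gw{}^kf}{k=0,1,\ldots,\;f\in\dz S},\qquad \kkc_1\okr\overline{\ddc_1},\qquad N_1\okr\widetilde N\res{\ddc_1},
\]
and show that the pair $(N_1,\kkc_1)$ satisfies the very conclusion of Theorem \ref{t2.3.1}. Since $\dz S$ is dense in $\hhc$ and $\dz S\subset\ddc_1$, the space $\hhc=\overline{\dz S}$ sits isometrically inside $\kkc_1$, so the ambient geometry is in order; moreover $N_1\subset\widetilde N$ and $S\subset N_1$ will be immediate from the construction once the domains are under control.

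The first thing to secure — and the main obstacle — is that $\ddc_1$ even makes sense, i.e. that $\dz S\subset\ddc^\infty(\widetilde N\gw)$. Here the standing assumption $S\dz S\subset\dz S$ enters decisively. For $f\in\dz S$ one has $\widetilde N^nf=S^nf$ for every $n$ (by $S\subset\widetilde N$ and invariance of $\dz S$), so, writing $E$ for the spectral measure of $\widetilde N$ and iterating \liczp 3 of Spectral Theorem \ref{t1.25.11} (in particular \eqref{1.9.5.7}), one gets $\int_\ccb|z|^{2n}\is{E(\D z)f}{f}=\|S^nf\|^2<+\infty$ for all $n$. Because $\widetilde N\gw$ is the spectral integral of $\bar z$ against the same $E$, this very integrability places $f$ in the domain of every $\widetilde N\gw{}^k$, indeed in $\ddc^\infty(\widetilde N\gw,\widetilde N)$; on that class all mixed products are computed by the spectral calculus and hence commute.

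With $\ddc_1$ well defined I would then check the items defining the conclusion of Theorem \ref{t2.3.1}. Invariance is the commutation just mentioned: $\widetilde N\,\widetilde N\gw{}^kf=\widetilde N\gw{}^k(\widetilde Nf)=\widetilde N\gw{}^k(Sf)\in\ddc_1$ because $Sf\in\dz S$, which gives $\widetilde N\ddc_1\subset\ddc_1$, while $\widetilde N\gw\ddc_1\subset\ddc_1$ is visible at once; this is \eqref{2.7.1}. For formal normality, observe that for $g,h\in\ddc_1$ one has $\is{N_1g}{h}=\is{g}{\widetilde N\gw h}$ with $\widetilde N\gw h\in\ddc_1\subset\kkc_1$, whence $h\in\dz{N_1\gw}$ and $N_1\gw h=\widetilde N\gw h$ (the projection onto $\kkc_1$ acting trivially on $\widetilde N\gw h$); thus $\ddc_1\subset\dz{N_1\gw}$ and, by normality of $\widetilde N$, $\|N_1g\|=\|\widetilde Ng\|=\|\widetilde N\gw g\|=\|N_1\gw g\|$, which is \eqref{1.27.12}. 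Finally $N_1\gw{}^kf=\widetilde N\gw{}^kf$ (iterate the identity $N_1\gw=\widetilde N\gw$ on the $\widetilde N\gw$-invariant set $\ddc_1$), so $\dz{N_1}=\ddc_1$ is exactly the span prescribed by \eqref{1.6.1}.

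Having verified all this, $(N_1,\kkc_1)$ and $(N,\kkc)$ both satisfy the conclusion of Theorem \ref{t2.3.1}, so Theorem \ref{t2.6.1} furnishes a unitary $U$ fixing $\hhc$ pointwise with $UN=N_1U$; that is, $N_1$ is $\hhc$-equivalent to $N$. Since $N_1=\widetilde N\res{\ddc_1}$ and $Sf=\widetilde Nf=N_1f$ for $f\in\dz S$, the chain $S\subset N_1\subset\widetilde N$ holds verbatim, which completes the argument. The only genuinely delicate point is the spectral computation placing $\dz S$ inside $\ddc^\infty(\widetilde N\gw)$; everything else is the routine bookkeeping of adjoints restricted to invariant subspaces.
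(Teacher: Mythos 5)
Your construction is precisely the paper's: it too sets $\ddc\okr\lin\zb{\widetilde N\gw{}^nf}{n=0,1,\dots,\;f\in\dz S}$, restricts $\widetilde N$ to this $\widetilde N$- and $\widetilde N\gw$-invariant subspace, checks formal normality of the restriction (the paper via the inclusion chain coming from \eqref{2.2.1}, you via a direct computation of the adjoint of the restriction --- the same point), and then matches this with \eqref{1.6.1} to invoke the uniqueness of Theorem \ref{t2.6.1}. Your extra spectral-calculus verification that $\dz S\subset\ddc^\infty(\widetilde N\gw,\widetilde N)$, so that $\ddc$ is well defined, is a detail the paper leaves implicit; the argument is correct.
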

   \begin{proof}
  If $\widetilde N$ is normal in $\widetilde\kkc$ say, then the
subspace
   \begin{equation} \label{1}
   \ddc\okr \lin\zb{\widetilde N\gw{}^nf}{n=0,1,\dots,\;f\in\dz S}
   \end{equation}
of ${\dz{\widetilde N}}$ is invariant for $\widetilde N$ and
$\widetilde N\gw$. The operator $ N_1\okr\widetilde N\res{\ddc}$
is formally normal. Indeed, because, due to \eqref{2.2.1},
$\widetilde N\gw\res\ddc\subset(\widetilde
N\rres{\overline{\ddc}})\gw\subset (\widetilde N\res\ddc)\gw$ we
can write for $x\in \ddc$
   \begin{equation*}
   \|N_1x\|=\|\widetilde Nx\|=\|\widetilde N\gw x\|=\|(\widetilde
N\res\ddc)\gw x\| =\|N_1\gw x\|.
   \end{equation*}
Comparing \eqref{1} with \eqref{1.6.1} suggests the definition of
the required unitary $\hhc$-equivalence.
   \end{proof}

   \begin{cor} \tlabel{t3.7.1}
   If $S$ is a cyclic and subnormal operator then the formally
normal operator determined by\ {\rm Theorem \ref{t2.3.1}} can be
realized as the operator {$M_Z{}\res{\ppc(\mu)}$} in the
$\llc^2({\mu})$--closure of the polynomials $\ppc(\mu)$, where
$\mu\okr\is{F(\,\cdot\,)1}1$ and $F$ is an arbitrary semispectral
measure of $S$. According to {\rm Theorem \ref{t2.6.1}} any two
such models are $\hhc$--equivalent. Finally, $N$ itself is
subnormal\,\footnote{\; Look at Corollary \ref{t1.31.12}.}.
   \end{cor}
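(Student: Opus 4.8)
The plan is to build the concrete model by matching complex moments and then to close the argument with the uniqueness Theorem \ref{t2.6.1}. First, since $S$ is subnormal, Theorem \ref{2x} supplies a semispectral measure $F$; set $\mu\okr\is{F(\,\cdot\,)e}{e}$, where $e$ is the cyclic vector of $S$ (in the model $e$ will be sent to the constant function $1$, which is what the notation $\is{F(\,\cdot\,)1}1$ of the statement records). Because $S$ has an invariant domain, Theorem \ref{t2xs}, formula \eqref{2.17.1}, upgrades the moment identity to all indices, so that $\is{S^me}{S^ne}=\int_\ccb z^m\sbar z^n\mu(\D z)$ for every $m,n\Ge0$. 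As $S$ is cyclic, $\dz S=\lin\zb{S^ne}{n\Ge0}$ is dense in $\hhc$, and the assignment $S^ne\mapsto Z^n$ is, by that identity, isometric from $\dz S$ onto the holomorphic polynomials $\ccb[Z]\subset\llc^2(\mu)$. It therefore extends to a unitary $W$ from $\hhc$ onto $\overline{\ccb[Z]}$ which intertwines $S$ with $S'\okr M_Z\res{\ccb[Z]}$ and carries $e$ to $1$.

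Next I would verify that the concrete pair $(M_Z\res{\ppc(\mu)},\overline{\ppc(\mu)})$ meets the conclusion of Theorem \ref{t2.3.1} for the copy $S'$. By Remark \ref{t1.28.12} the operator $N'\okr M_Z\res{\ppc(\mu)}$ is formally normal in $\overline{\ppc(\mu)}$ and manifestly extends $S'$. Its domain $\ppc(\mu)=\ccb[Z,\sbar Z]$ is invariant under multiplication by $z$ and by $\sbar z$, and an adjoint computation gives $M_{\sbar Z}\res{\ppc(\mu)}\subset N'\gw$; hence \eqref{2.7.1} holds, and moreover $\dz{N'}=\ppc(\mu)=\lin\zb{\sbar Z^kZ^l}{k,l\Ge0}=\lin\zb{N'\gw{}^k(S')^l1}{k,l\Ge0}$, which is exactly \eqref{1.6.1} expressed through the $*$--cyclic vector $1$ (cf. Remark \ref{t10.10.1}).

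Finally I would invoke the uniqueness Theorem \ref{t2.6.1}. Transporting $S$ to $S'$ by $W$, both $(N,\kkc)$ and $(N',\overline{\ppc(\mu)})$ satisfy the conclusion of Theorem \ref{t2.3.1} for one and the same operator, so they are $\hhc$--equivalent; this is the asserted realization of $N$ as $M_Z\res{\ppc(\mu)}$. Running the same argument with two different semispectral measures shows that all such models are $\hhc$--equivalent. Subnormality of $N$ is then read off the model: $N'=M_Z\res{\ppc(\mu)}\subset M_Z$ with $M_Z$ normal in $\llc^2(\mu)\supset\overline{\ppc(\mu)}$, so $N'$ is subnormal by definition, and subnormality passes to $N$ across the unitary equivalence, in the spirit of Corollary \ref{t1.31.12} (applied with a unitary $V$).

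The main obstacle is the bookkeeping around \eqref{1.6.1}: one must check that the adjoint of $M_Z\res{\ppc(\mu)}$, formed inside the possibly proper subspace $\overline{\ppc(\mu)}$, still contains $M_{\sbar Z}\res{\ppc(\mu)}$, so that the span generated from $1$ by $N'\gw$ and $S'$ is genuinely all of $\ppc(\mu)$; and one must make sure that the two pairs compared in Theorem \ref{t2.6.1} really extend the \emph{same} $S$ after the identification by $W$. Once these points are secured, the remaining steps are the routine moment-matching and density arguments.
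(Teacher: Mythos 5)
Your argument is correct and assembles exactly the ingredients the paper itself points to (the corollary is printed without a proof): Theorems \ref{2x} and \ref{t2xs} give the full moment identity and hence the isometry $S^ne\mapsto Z^n$, Remark \ref{t1.28.12} together with your verification of \eqref{2.7.1} and \eqref{1.6.1} makes $M_Z\res{\ppc(\mu)}$ a legitimate competitor in Theorem \ref{t2.6.1}, and subnormality is read off from $M_Z\res{\ppc(\mu)}\subset M_Z$ just as the footnote's reference to Corollary \ref{t1.31.12} intends. The only caveat is one the paper itself waves away in its ``Advice'': you tacitly take $\dz S=\lin\zb{S^ne}{n\Ge0}$ rather than merely a core, i.e.\ you really work with the cyclic portion $S_e$, which is the reading the paper sanctions.
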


   In general, Theorem \ref{t2.3.1} is nothing but an intermediate
step toward subnormality. Because $N$ is just formally normal the
uncertainty is still ahead. The only known result which reminds
that of Bram is as follows, cf. \cite{try2}.
   \begin{thm} \tlabel{t3.3.1}
   Suppose $S$ is a weighted shift\,\footnote{\;$S$ in $\hhc$ is a
{\em weighted shift} if there is an orthonormal basis $(e_n)_n$ in
$\hhc$ such that $Se_n=\sigma_ne_{n+1}$ with some positive weights
$(\sigma_n)_n$.}. Then $S$ is subnormal if and only if it
satisfies the positive definiteness condition {\rm(}{\tt
PD}{\rm)}.
   \end{thm}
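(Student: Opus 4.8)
The forward implication needs no work: a subnormal $S$ satisfies {\rm(}{\tt PD}{\rm)} by Corollary \ref{t1.4.1}. So I concentrate on the converse, whose heart is to convert {\rm(}{\tt PD}{\rm)} into a \emph{one}--dimensional Stieltjes moment condition. Here $\dz S=\lin\zb{e_n}{n\Ge0}$, which is invariant under $S$. Write $\gamma_0\okr1$ and $\gamma_n\okr\sigma_0^2\cdots\sigma_{n-1}^2=\|S^ne_0\|^2$ for $n\Ge1$; since $\sigma_n>0$ we have $\gamma_n>0$ and $\prod_{l=n}^{n+k-1}\sigma_l=\sqrt{\gamma_{n+k}/\gamma_n}$. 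The plan is to feed {\rm(}{\tt PD}{\rm)} the very special finite families $f_m=a_me_{m+c}$ ($m=0,\dots,L$), with $c\Ge0$ fixed and $a_m\in\ccb$ arbitrary. For such vectors $S^mf_n$ is a multiple of $e_{n+c+m}$ and $S^nf_m$ a multiple of $e_{m+c+n}$, so \emph{every} cross term survives, and a direct computation yields
\[
\is{S^mf_n}{S^nf_m}=a_n\overline{a_m}\,\frac{\gamma_{m+n+c}}{\sqrt{\gamma_{m+c}\,\gamma_{n+c}}}.
\]
After the substitution $b_m\okr a_m/\sqrt{\gamma_{m+c}}$ (a bijection of finite sequences, as $\gamma_{m+c}>0$), condition {\rm(}{\tt PD}{\rm)} becomes $\sum_{m,n}\gamma_{m+n+c}\,b_n\overline{b_m}\Ge0$. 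Taking $c=0$ and $c=1$ I obtain that \emph{both} Hankel matrices $(\gamma_{m+n})_{m,n\Ge0}$ and $(\gamma_{m+n+1})_{m,n\Ge0}$ are positive semidefinite, which is exactly the classical criterion for $(\gamma_n)_n$ to be a Stieltjes moment sequence. Hence there is a positive measure $\nu$ on $[0,+\infty)$ with $\gamma_n=\int_0^\infty t^n\,\nu(\D t)$ for all $n$.

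Next I would spread $\nu$ rotationally over $\ccb$. Let $\mu$ be the rotationally invariant measure whose radial part is the image of $\nu$ under $t\mapsto\sqrt t$ and whose angular part is normalized arclength $\D\theta/2\pi$; that is, in polar coordinates $z=r\E^{\I\theta}$, $\int_\ccb g\,\D\mu=\frac1{2\pi}\int_0^{2\pi}\!\!\int_0^\infty g(r\E^{\I\theta})\,\rho(\D r)\,\D\theta$, with $\rho$ the image of $\nu$. Then $\mu$ has finite moments and, by the $\theta$--integration, $\is{z^m}{z^n}_{\llc^2(\mu)}=\delta_{mn}\int_0^\infty t^n\,\nu(\D t)=\delta_{mn}\gamma_n$. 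Consequently $\epsilon_n\okr z^n/\sqrt{\gamma_n}$ is orthonormal in $\overline{\ppc(\mu)}$, and $V\colon e_n\mapsto\epsilon_n$ extends to a unitary identification of $\hhc$ with $\overline{\ppc(\mu)}$ carrying $\dz S$ onto $\ppc(\mu)$.

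Finally I would check the intertwining. Since $M_Z\epsilon_n=z^{n+1}/\sqrt{\gamma_n}=\sqrt{\gamma_{n+1}/\gamma_n}\,\epsilon_{n+1}=\sigma_n\epsilon_{n+1}$, the operator $VSV\gw$ is precisely $M_Z\res{\ppc(\mu)}$. As recorded earlier, $M_Z$ is normal on $\llc^2(\mu)$ and $\overline{\ppc(\mu)}$ embeds isometrically in $\llc^2(\mu)$; thus, after the identification, $S\subset M_Z$ with $M_Z$ normal, which is subnormality in the sense of \eqref{3.28.12}.

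The main obstacle---and the point where the weighted--shift hypothesis does the real work---is the first paragraph. For a general operator {\rm(}{\tt PD}{\rm)} is only \emph{one} positivity condition and, as the discussion around Bram's theorem warns, does not by itself force subnormality; the shift structure, however, lets me slide the test vectors by an arbitrary $c$ and so extract the \emph{two} Hankel positivities that together make up the full Stieltjes condition. Once the one--dimensional moment problem is solved, the passage to the circularly symmetric $\mu$ and the $\llc^2(\mu)$--model is routine and parallels Corollary \ref{t3.7.1}.
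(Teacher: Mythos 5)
Your argument is correct, and it is essentially the standard one: the paper itself gives no proof of this theorem, deferring to \cite{try2}, and the proof there is of exactly the kind you give --- reduce ({\tt PD}) for a unilateral weighted shift to the two Hankel positivities of the Stieltjes moment problem for $\gamma_n=\|S^ne_0\|^2$ (the unbounded analogue of the Berger--Gellar--Wallen criterion), then realize $S$ inside $M_Z$ on $\llc^2(\mu)$ for a rotation-invariant $\mu$ built from the Stieltjes measure. Your computation $\is{S^mf_n}{S^nf_m}=a_n\overline{a_m}\,\gamma_{m+n+c}/\sqrt{\gamma_{m+c}\gamma_{n+c}}$ and the renormalization $b_m=a_m/\sqrt{\gamma_{m+c}}$ are right, and the shift by $c=0,1$ is indeed where the weighted-shift structure supplies the second Hankel condition that ({\tt PD}) alone would not give for a general operator. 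Two small corrections. First, in the paper's notation $\ppc(\mu)$ is the span of \emph{all} monomials $z^m\sbar z^n$, so your $V$ is not onto $\overline{\ppc(\mu)}$: for a rotation-invariant $\mu$ one has $\sbar z\perp z^n$ for all $n\Ge0$, so $V$ identifies $\hhc$ only with the closure of the \emph{analytic} polynomials, and $VSV\gw$ is $M_Z$ restricted to those; this is harmless, since all you need is the isometric embedding and the inclusion $S\subset M_Z$ with $M_Z$ normal. Second, you tacitly take the shift to be unilateral ($n\Ge0$); for a bilateral shift the same scheme works but requires the two-sided (strong) Stieltjes moment problem and a measure not charging the origin. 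Neither point affects the validity of the proof in the intended (unilateral) setting.
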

    \subsubsection*{Cyclicity and related matters}
     Getting experienced already with $*$--cyclicity we can say
that a closable operator $A$ with invariant domain is {\em cyclic}
with a {\em cyclic vector} $e$ if
   \begin{equation*}
   \zb{p(A)e}{p\in\ccb[Z]}
   \end{equation*}
   is a core of $A$. On the other hand, given a vector $f\in\dz A$
set
   \begin{equation*}
   \ddc_f\okr\zb{p(A)f}{p\in\ccb[Z]},\quad
\hhc_f\okr\overline{\ddc_f},\quad A_f\okr A\res{\ddc_f}.
   \end{equation*}
   The definition of $A_f$ is in accordance with what is on p.
\pageref and means an operator acting in the Hilbert space
$\hhc_f$; call $A_f$ the {\em cyclic portion} of $A$ {\em at} $f$.
Therefore $A$ is cyclic if and only if $\sbar A=\overline{A_f}$
for some $f\in\dz A$.

   Notice that if $g\in\ddc_f$ then $\ddc_g\subset\ddc_f$.
However, if $f\neq g$ we can not say anything reasonable about
dislocation of the spaces $\ddc_f$ and $\ddc_g$ unless they both
are reducing.

    \subsubsection*{The complex moment problem}

   Given a bisequence $(c_{m,n})_{m,n=0}^\infty$, call it a {\em
complex moment sequence} it there exists a positive Borel measure
$\mu$ on $\ccb$ such that
   \begin{equation}\label{1.4.1}
    c_{m,n}=\int_{\mathbb C}z^m\sbar z^n\D\mu,\quad m,n=0,1,\ldots
    \end{equation}
    The {\em complex moment problem} related to a bisequence
$(c_{m,n})_{m,n=0}^\infty$ consists in finding a measure {\em
representing} the bisequence via \eqref{1.4.1}\,\footnote{\;It
happens people carelessly mix up those concepts.}. The measure
$\mu$, thus the moment sequence $(c_{m,n})_{m,n=0}^\infty$, is
called {\em determinate} it there is no other measure representing
the sequence by \eqref{1.4.1}. Another, stronger concept,
introduced in \cite{fu}, calls the measure $\mu$, as well as the
related bisequence\,\footnote{\;The definition in \cite{fu} is
stated for a bisequence, that for a measure comes from searching
through the paper.}, {\em ultradeterminate}, cf. footnote
\ref{ultra}, if the operator $M_Z\res{\ppc(\mu)}$ of
multiplication by the independent variable is essentially normal.
If this happens, $M_Z$ is normal in $\llc^2(\mu)$, cf. Fact
\ref{t2.27.12}.

   The moment problem version of Corollary \ref{t1.4.1} determines
a kind of positive definiteness that a complex moment bisequence
$(c_{m,n})_{m,n=0}^\infty$ has necessarily to satisfy.
   \begin{pro}\tlabel{t2.4.1}
   If $(c_{m,n})_{m,n=0}^\infty$ is a complex moment bisequence
then
   \begin{equation}
\text{$\sum_{m,n} c_{m+q,n+p}\lambda_{m,n}\bar\lambda_{p,q}$ for
all finite bisequences $(\lambda_{k,l})_{k,l}\subset\ccb$}.
\tag{{\tt MPD}}
   \end{equation}

   \end{pro}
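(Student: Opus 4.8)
The plan is to recognise the form ({\tt MPD}) as the integral of a squared modulus against any representing measure, so that its nonnegativity becomes manifest. First I would feed the integral representation \eqref{1.4.1} into each coefficient, writing $c_{m+q,n+p}=\int_{\mathbb C}z^{m+q}\sbar z^{n+p}\,\D\mu$. Since $(\lambda_{k,l})_{k,l}$ is a \emph{finite} bisequence, the quadruple sum
\[
\sum_{m,n,p,q}c_{m+q,n+p}\,\lambda_{m,n}\bar\lambda_{p,q}
\]
is finite and may be carried under the integral sign without any convergence scruple (the finiteness of the moments of $\mu$ in any case keeps every monomial in $\llc^2(\mu)$).

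The decisive step is then purely algebraic: one factorises the integrand as
\[
z^{m+q}\sbar z^{n+p}\,\lambda_{m,n}\bar\lambda_{p,q}
=\bigl(\lambda_{m,n}z^m\sbar z^n\bigr)\,\overline{\bigl(\lambda_{p,q}z^p\sbar z^q\bigr)}.
\]
Setting $P\okr\sum_{m,n}\lambda_{m,n}z^m\sbar z^n\in\ccb[Z,\sbar Z]$, summing the left-hand side over all four indices collapses it into $|P(z)|^2$, whence ({\tt MPD}) equals $\int_{\mathbb C}|P|^2\,\D\mu\Ge0$. There is no serious obstacle here; the single point that demands attention is the index-and-conjugate bookkeeping, namely that one must pair the conjugated weight $\bar\lambda_{p,q}$ with the monomial $z^q\sbar z^p=\overline{z^p\sbar z^q}$, so that $z^m\sbar z^n\cdot z^q\sbar z^p=z^{m+q}\sbar z^{n+p}$ correctly rebuilds the coefficient subscript.

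Alternatively, and in accordance with the billing of ({\tt MPD}) as the moment-problem version of Corollary \ref{t1.4.1}, I would argue operator-theoretically. The multiplication operator obeys $M_Z\res{\ppc(\mu)}\subset M_Z$ with $M_Z$ normal in $\llc^2(\mu)$, hence $M_Z\res{\ppc(\mu)}$ is subnormal; moreover $c_{m,n}=\is{z^m}{z^n}$ and $\ppc(\mu)\subset\dz{M_Z}$ is invariant. Inserting the finite family $f_m\okr\sum_k\lambda_{k,m}z^k$ into ({\tt PD}) and expanding reproduces, after relabelling $(k,n)\mapsto(m,n)$ and $(j,m)\mapsto(p,q)$, exactly $\sum_{m,n,p,q}c_{m+q,n+p}\lambda_{m,n}\bar\lambda_{p,q}$, so Corollary \ref{t1.4.1} delivers ({\tt MPD}) at once.
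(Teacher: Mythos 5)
Your proof is correct. The paper in fact supplies no proof of this proposition at all --- it is presented as the immediate ``moment problem version of Corollary \ref{t1.4.1}'' --- so your first argument, substituting \eqref{1.4.1} and collapsing the quadruple sum into $\int_\ccb|P|^2\D\mu$ with $P=\sum_{m,n}\lambda_{m,n}Z^m\sbar Z^n$, is exactly the routine computation the author leaves to the reader, and your index bookkeeping $z^m\sbar z^n\cdot\overline{z^p\sbar z^q}=z^{m+q}\sbar z^{n+p}$ is the one point where such a verification can go wrong; you got it right. Your alternative operator--theoretic route (apply ({\tt PD}) to $M_Z\res{\ppc(\mu)}$, which is subnormal by Remark \ref{t1.28.12} even when $\ppc(\mu)$ fails to be dense in $\llc^2(\mu)$, taking $f_m=\sum_k\lambda_{k,m}z^k$) matches the paper's own framing and exhibits ({\tt MPD}) as a specialization of ({\tt PD}), though it buys nothing beyond the direct integration here. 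One editorial remark: the displayed ({\tt MPD}) in the paper omits the relation ``$\Ge0$''; you were right to read the statement as nonnegativity of the Hermitian form.
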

   In the other direction again we stop halfway.
   \begin{thm} \tlabel{t2a.3.1}
   A bisequence $(c_{m,n})_{m,n=0}^\infty$ satisfies {\rm(}{\tt
   MPD}{\rm)} if and only if there is a Hilbert space $\kkc$
   containing the $1$--dimensional Hilbert space $\ccb$
   isometrically, and a formally normal operator $N$ in $\kkc$
   such that $\dz N=\lin\zb{N\gw{}^mN^n1}{m,n=0,1,\ldots}$ and
   \begin{equation*}
   c_{m,n}=\is{N^m1}{N^n1}, \quad m,n=0,1,\ldots
   \end{equation*}
   \end{thm}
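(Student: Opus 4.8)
The plan is to recognise Theorem \ref{t2a.3.1} as the moment–problem avatar of Theorem \ref{t2.3.1}: I would specialise the latter to a \emph{cyclic} operator manufactured directly from the bisequence, and then read the positive definiteness {\rm(}{\tt PD}{\rm)} off $(c_{m,n})$. The whole argument hinges on one piece of bookkeeping, namely that for this cyclic operator {\rm(}{\tt PD}{\rm)} is \emph{verbatim} {\rm(}{\tt MPD}{\rm)}.

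For the forward implication I would first note that feeding $\lambda_{m,n}=a_m\delta_{n,0}$ into {\rm(}{\tt MPD}{\rm)} collapses it to $\sum_{m,p}c_{m,p}a_m\bar a_p\Ge0$, so the Gram matrix $(c_{m,n})_{m,n}$ is positive semidefinite and $\is{Z^m}{Z^n}\okr c_{m,n}$ defines a positive semidefinite sesquilinear form on $\ccb[Z]$. Quotienting by its kernel and completing produces a Hilbert space $\hhc$ carrying a distinguished vector $1$ with $\|1\|^2=c_{0,0}$; on it I let $S$ be multiplication by $Z$ on $\dz S\okr\lin\zb{Z^n}{n=0,1,\dots}$, a cyclic operator with $S\dz S\subset\dz S$ and $\is{S^m1}{S^n1}=c_{m,n}$. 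Testing {\rm(}{\tt PD}{\rm)} on $f_k=p_k(S)1$ with $p_k=\sum_j a_{k,j}Z^j$ gives
\begin{equation*}
\sum_{m,n}\is{S^mf_n}{S^nf_m}=\sum_{m,n,j,l}c_{m+j,\,n+l}\,a_{n,j}\,\bar a_{m,l},
\end{equation*}
and the relabelling $\lambda_{j,n}\okr a_{n,j}$ turns the right--hand side into the left--hand side of {\rm(}{\tt MPD}{\rm)}. Thus $S$ satisfies {\rm(}{\tt PD}{\rm)}, and Theorem \ref{t2.3.1} hands me $\kkc\supset\hhc$ and a formally normal $N\supset S$ fulfilling \eqref{2.7.1}; since $S$ is cyclic, \eqref{1.6.1} specialises, exactly as in Remark \ref{t10.10.1}, to $\dz N=\lin\zb{N\gw{}^mN^n1}{m,n=0,1,\dots}$, while $c_{m,n}=\is{S^m1}{S^n1}=\is{N^m1}{N^n1}$ because $\hhc$ sits isometrically in $\kkc$.

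For the converse I would argue directly. The point is that \eqref{2.7.1} keeps $\dz N$ invariant for both $N$ and $N\gw$, so that formal normality $\|Nf\|=\|N\gw f\|$ polarises to $\is{N\gw Nf}{g}=\is{NN\gw f}{g}$ on the dense $\dz N$, forcing $N\gw N=NN\gw$ there; the monomials in $N,N\gw$ may then be reordered at will. Sliding every $N\gw$ across by the adjoint relation (all intermediate vectors remaining in $\dz N\subset\dz{N\gw}$) would yield the key identity
\begin{equation*}
\is{N\gw{}^nN^m1}{N\gw{}^qN^p1}=\is{N^{m+q}1}{N^{n+p}1}=c_{m+q,\,n+p},
\end{equation*}
after which, for any finite $(\lambda_{m,n})$, the vector $h\okr\sum_{m,n}\lambda_{m,n}N\gw{}^nN^m1\in\dz N$ gives $0\Le\|h\|^2=\sum_{m,n,p,q}c_{m+q,\,n+p}\lambda_{m,n}\bar\lambda_{p,q}$, which is exactly {\rm(}{\tt MPD}{\rm)}.

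I expect the reordering identity to be the main obstacle: it simply fails for a bare formally normal $N$, so the invariance \eqref{2.7.1} (equivalently, the special shape \eqref{1.6.1} of the domain produced by Theorem \ref{t2.3.1}) must be used essentially to make $N$ and $N\gw$ commute on $\dz N$ before they can be disentangled. A smaller thing to keep honest is the normalisation $\|1\|^2=c_{0,0}$ of the vector spanning the isometric copy of $\ccb$.
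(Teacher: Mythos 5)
Your strategy is exactly the paper's intended one: Theorem \ref{t2a.3.1} is stated without a separate proof precisely because it is the GNS/moment-problem specialization of Theorem \ref{t2.3.1}, which is what you carry out, and your index bookkeeping identifying {\rm(}{\tt PD}{\rm)} for the cyclic multiplication operator with {\rm(}{\tt MPD}{\rm)} is verified correctly, as is the key identity $\is{N\gw{}^nN^m1}{N\gw{}^qN^p1}=c_{m+q,\,n+p}$ driving the converse.

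Two points in your write-up need tightening. First, in the forward direction you build the form only on $\ccb[Z]$ from the slice $\lambda_{m,n}=a_m\delta_{n,0}$, but then $S=M_Z$ on equivalence classes is not automatically well defined: if $\is{p}{p}=0$ you must check $\is{Zp}{Zp}=0$, and the Gram positivity you extracted does not give this. You need more of {\rm(}{\tt MPD}{\rm)}: view it as a positive sesquilinear form $B$ on all of $\ccb[Z,\sbar Z]$ satisfying $B(Zu,v)=B(u,\sbar Zv)$, so that Cauchy--Schwarz yields $B(Zp,Zp)=B(p,\sbar ZZp)\Le B(p,p)^{1/2}B(\sbar ZZp,\sbar ZZp)^{1/2}=0$ whenever $B(p,p)=0$; alternatively run the GNS construction on $\ccb[Z,\sbar Z]$ from the start. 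Second, in the converse your parenthetical claim that \eqref{2.7.1} is ``equivalently'' the shape of the domain overshoots: the equality $\dz N=\lin\zb{N\gw{}^mN^n1}{m,n=0,1,\ldots}$ gives invariance under $N\gw$ at once, but invariance under $N$ does \emph{not} follow from it, and without $N$-invariance your polarization step (which needs $Ng\in\dz{N\gw}$, supplied by $N\dz N\subset\dz N\subset\dz{N\gw}$, alongside $N\gw\dz N\subset\dz N$) collapses, and with it the reordering identity. So \eqref{2.7.1} must be read as part of the statement --- the $N$ of Theorem \ref{t2a.3.1} is the one furnished by Theorem \ref{t2.3.1}, which does satisfy it --- and with that reading your converse is complete. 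Your normalization remark is apt: the isometric copy of $\ccb$ tacitly forces $c_{0,0}=\|1\|^2=1$.
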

    Now everything depends on if the formally normal operator $N$
has a normal extension or not.

The explicitly defined bisequence in \cite{step}, p.259, which in
turn is an adapted to the present circumstances version of that in
\cite{fri}, satisfies ({\tt MPD}) and is not a complex moment one.

   \subsubsection*{Subnormality and the complex moment problem}
   Here we have the fundamental result which continues Theorem
\ref{t2a.3.1} and goes towards our open problem.
   \begin{thm} \tlabel{t2.4.1}
    {\rm(}a{\rm)} If $S$ is subnormal then
$(\is{S^mf}{S^nf})_{m,n=0}^\infty$ is a complex moment bisequence
for every $f\in\dz S$;

 {\rm(}b{\rm)} If $S$ is cyclic with a cyclic vector $e$ and
$(\is{S^me}{S^ne})_{m,n=0}^\infty$ is a complex moment bisequence
then $S$ is subnormal. Moreover, if $\mu$ is a measure which
represents $(\is{S^me}{S^ne})_{m,n=0}^\infty$ by \eqref{1.4.1} and
$f\in\dz S$ is of the form $p(S)e$ for some $p\in\ccb[Z]$ then
$(\is{S^mf}{S^nf})_{m,n=0}^\infty$ is a moment bisequence and
$|p|^2\mu$ is its representing measure.

   \end{thm}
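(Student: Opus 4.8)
I would prove the two parts by different means: part~(a) by invoking the machinery already in place, part~(b) by constructing the canonical $\llc^2$-model. For part~(a), since $S$ is subnormal and, under the standing hypothesis, has invariant domain, Theorem~\ref{t2xs} applies verbatim and furnishes elementary spectral measures $\mu_f$ of $S$ with
\[
\is{S^mf}{S^nf}=\int_\ccb z^m\sbar z^n\,\mu_f(\D z),\qquad m,n=0,1,\ldots,\quad f\in\dz S.
\]
As each $\mu_f$ is a positive measure on $\ccb$, this is exactly the assertion, via \eqref{1.4.1}, that $(\is{S^mf}{S^nf})_{m,n}$ is a complex moment bisequence, represented by $\mu_f$; so (a) is immediate.

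For part~(b) let $\mu$ represent $(\is{S^me}{S^ne})_{m,n}$ through \eqref{1.4.1}, and put $\ddc_e\okr\zb{p(S)e}{p\in\ccb[Z]}$. Define $\funk{U}{\ddc_e}{\llc^2(\mu)}$ by $U(p(S)e)\okr p$, the holomorphic polynomial $p$ regarded in $\llc^2(\mu)$. Writing $p=\sum_k a_kZ^k$ and inserting the moment representation gives
\[
\|p(S)e\|^2=\sum_{k,l}a_k\bar a_l\is{S^ke}{S^le}=\sum_{k,l}a_k\bar a_l\int_\ccb z^k\sbar z^l\,\mu(\D z)=\int_\ccb|p(z)|^2\,\mu(\D z),
\]
so $U$ is a well-defined isometry of $\ddc_e$ onto the holomorphic polynomials $\ccb[Z]\subset\llc^2(\mu)$, and the same computation polarizes to $\is{q(S)e}{r(S)e}=\int_\ccb q(z)\,\overline{r(z)}\,\mu(\D z)$. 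A one-line check, $U(S\,p(S)e)=Zp=M_Z\,U(p(S)e)$, shows that $U$ intertwines $S_e\okr S\res{\ddc_e}$ with $M_Z\res{\ccb[Z]}$. Since $M_Z$ is normal on $\llc^2(\mu)$ and $\ccb[Z]$ is invariant for it, $M_Z\res{\ccb[Z]}$ is subnormal; hence so is its unitary copy $S_e$.

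It remains to pass from $S_e$ to $S$. Because $e$ is cyclic, $\ddc_e$ is a core of the (closable) operator $S$, so $\sbar S=\sbar{S_e}$. If $N$ is a normal extension of $S_e$ then $N$ is closed, whence $\sbar S=\sbar{S_e}\subset N$ and therefore $S\subset N$; thus $S$ is subnormal. Finally, for $f=p(S)e$ one has $S^mf=(Z^mp)(S)e$, and the polarized inner-product formula yields
\[
\is{S^mf}{S^nf}=\int_\ccb z^mp(z)\,\overline{z^np(z)}\,\mu(\D z)=\int_\ccb z^m\sbar z^n\,|p(z)|^2\,\mu(\D z),
\]
identifying $|p|^2\mu$ as the representing measure. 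The one genuinely delicate point is this last transfer of subnormality from the cyclic portion $S_e$ to $S$: it is precisely cyclicity, through the core property $\sbar S=\sbar{S_e}$, that lets a closed normal extension of $S_e$ automatically swallow $S$, and without it part~(b) would have no reason to hold.
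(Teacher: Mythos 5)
Your argument is sound, and in fact the paper states this theorem without any proof (it is offered as a known ``fundamental result''), so there is no in-text argument to measure you against; what you have written is the natural proof and it stays entirely within the toolkit the paper assembles. Part (a) is exactly what Theorem \ref{t2xs} delivers under the standing invariant-domain hypothesis: each elementary spectral measure $\mu_f$ represents the bisequence for all $m,n$, and $\int_\ccb|z|^{2n}\mu_f(\D z)=\|S^nf\|^2<\infty$ guarantees finite moments. Your part (b) is the converse direction of the mechanism behind Corollary \ref{t3.7.1} and Theorem \ref{t3.4.1}($\alpha$): from a representing measure you build the isometry $U$ onto the holomorphic polynomials, identify $S_e$ with $M_Z\res{\ccb[Z]}$ acting in $\overline{\ccb[Z]}\subset\llc^2(\mu)$, and invoke normality of $M_Z$ in $\llc^2(\mu)$ (the finiteness of the moments of $\mu$, forced by the representation itself, is what places $\ccb[Z]$ inside $\dz{M_Z}$, and well-definedness of $U$ follows from the same isometry identity). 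Two small points are worth making explicit, though neither is a gap: the transfer of subnormality across the unitary $U$ requires extending $U$ to the ambient space of the normal extension --- this is precisely the service rendered by Corollary \ref{t1.31.12}, and the paper itself warns that subnormality ``exceeds the underlying Hilbert space''; and, as you correctly emphasize, the passage from $S_e$ to $S$ rests on $\ddc_e$ being a core, so that $\sbar S=\overline{S_e}\subset N$ for any closed normal extension $N$ of $S_e$. Your closing computation identifying $|p|^2\mu$ as the representing measure for $f=p(S)e$ is the polarized form of the same isometry and is correct.
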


   Referring to Theorem \ref{t2a.3.1} and Fuglede's classification
of determinacy we have two relevant notions: call a vector
$f\in\dz S$ a {\em vector of determinacy} of $S$ if the moment
bisequence $(\is{S^mf}{S^nf})_{m,n=0}^\infty$
 is determinate; if this bisequence is ultradeterminate call $f$
the {\em vector of ultradeterminacy} of
$S$\,\footnote{\;\label{ultra}The term `vector of uniqueness' as
in \cite{nuss}, which is more appropriate for symmetric operators
and real one dimensional moment problems, splits here in two.
Notice that in \cite{cyk} we use the term `vector of uniqueness
with still a slightly different meaning.}. It is clear these two
kinds of determinacy \underbar{require} the operator $S_f$ to be
already subnormal.

   \ass{{\rm In the discussion which follows there are two
alternating situations: they concern either a cyclic
\underbar{operator} or a cyclic \underbar{portion} of an operator.
A reader may chose to think of any of these two without any side
effect.}\vspace{8pt}} \rm

   \begin{thm} \tlabel{t3.4.1}
   Suppose $S$ is cyclic and $e$ is its cyclic vector. Then the
following two conclusions hold.
\begin{enumerate}
   \item[$(\alpha)$] If $e$ is the vector of determinacy then the
formally normal operator determined by\ {\rm Theorem \ref{t2.3.1}}
can be realized as the operator {$M_Z{}\res{\ppc(\mu)}$} in the
$\llc^2({\mu})$--closure of polynomials $\ppc(\mu)$, where $\mu$
is the unique measure representing
$(\is{S^me}{S^ne})_{m,n=0}^\infty $.

   \item[$(\beta)$] If $e$ is a vector of ultradeterminacy of $S$
then the formally normal operator $N$ constructed as in {\rm
Theorem
 \ref{t2.3.1}} is essentially normal. $\sbar N$ can be realized as
the normal operator $M_Z$ in $\llc^2({\mu})$ with $\mu$ as in
$(\alpha)$.
   \end{enumerate}
   \end{thm}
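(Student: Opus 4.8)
The plan is to deduce both parts from the concrete model already furnished by Corollary~\ref{t3.7.1}, the moment problem entering only to pin down the relevant measure. First one must observe that the hypotheses are strong enough to place $S$ inside the subnormal world: since $S$ is cyclic with cyclic vector $e$, the assumption that $e$ is a vector of determinacy (or ultradeterminacy) presupposes that $(\is{S^me}{S^ne})_{m,n=0}^\infty$ is a complex moment bisequence, whence the cyclic implication of Theorem~\ref{t2.4.1} makes $S$ subnormal. Consequently Theorem~\ref{t2.3.1}, Remark~\ref{t10.10.1} and Corollary~\ref{t3.7.1} all apply: the formally normal operator $N$ of Theorem~\ref{t2.3.1} is $*$--cyclic with the same $e$, has domain $\lin\zb{N\gw{}^kN^le}{k,l=0,1,\dots}$ by \eqref{1.6.1}, and is realized $\hhc$--unitarily as $M_Z\res{\ppc(\mu')}$ in $\overline{\ppc(\mu')}$, where $\mu'\okr\is{F(\,\cdot\,)e}{e}$ for a semispectral measure $F$ of $S$ (this is the measure written $\is{F(\,\cdot\,)1}1$ in Corollary~\ref{t3.7.1}).

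For $(\alpha)$ it then remains only to identify $\mu'$ with the distinguished measure $\mu$. By Theorem~\ref{t2xs} (that is, \eqref{6.28.12} promoted to all $m,n$, equally \eqref{2.17.1} with $f=e$) the measure $\mu'$ represents the bisequence $(\is{S^me}{S^ne})_{m,n=0}^\infty$; but $e$ being a vector of determinacy means precisely that this bisequence admits a single representing measure, namely $\mu$. Hence $\mu'=\mu$, and the model of Corollary~\ref{t3.7.1} is exactly $M_Z\res{\ppc(\mu)}$, which is the assertion of $(\alpha)$.

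For $(\beta)$ note that ultradeterminacy is stronger than determinacy, so $(\alpha)$ already yields $N\cong M_Z\res{\ppc(\mu)}$ with $\mu$ the unique representing measure. Now Fact~\ref{t2.27.12} does the rest: the bisequence being ultradeterminate is exactly the statement that $M_Z\res{\ppc(\mu)}$ is essentially normal, equivalently that $\ppc(\mu)$ is a core of $M_Z$; in particular $\ppc(\mu)$ is dense in $\llc^2(\mu)$ and $\overline{M_Z\res{\ppc(\mu)}}=M_Z$ on all of $\llc^2(\mu)$. Since essential normality and the operation of taking closures are invariant under the $\hhc$--equivalence of Corollary~\ref{t3.7.1}, $N$ is essentially normal and $\sbar N$ is carried onto the normal operator $M_Z$ in $\llc^2(\mu)$.

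The genuine computations here are minimal; the one point demanding care is the bookkeeping that links the three a priori different measures --- the semispectral $\mu'=\is{F(\,\cdot\,)e}{e}$ of Corollary~\ref{t3.7.1}, the moment-problem measure $\mu$, and the spectral measure of $\sbar N$ in $(\beta)$ --- together with the recognition that determinacy is exactly the hypothesis collapsing the first two. One must also keep in mind that, by Remark~\ref{t1.28.12}, $N$ lives a priori only in the possibly smaller space $\overline{\ppc(\mu)}$, so that the step in $(\beta)$ of passing to $M_Z$ on the full space $\llc^2(\mu)$ genuinely relies on the density of $\ppc(\mu)$ that ultradeterminacy provides.
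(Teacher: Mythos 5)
Your proposal is correct and follows essentially the same route as the paper, whose entire proof reads ``Apply Corollary \ref{t3.7.1} to get $(\alpha)$. Now use $(\alpha)$ and the fact that $e$ is the vector of ultradeterminacy of $S$ to come to $(\beta)$.'' You have simply made explicit the bookkeeping the paper leaves implicit --- subnormality of $S$ via Theorem \ref{t2.4.1}(b), the identification of $\is{F(\,\cdot\,)e}{e}$ with the unique representing measure $\mu$ via Theorem \ref{t2xs} and determinacy, and essential normality via the definition of ultradeterminacy together with Fact \ref{t2.27.12}.
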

   \begin{proof}
   Apply Corollary \ref{t3.7.1} to get $(\alpha)$. Now use
$(\alpha)$ and the fact that $e$ is the vector of ultradeterminacy
of $S$ to come to $(\beta).$
    \end{proof}

   Notice ($\beta$) says that if $e$ is a vector of
ultradeterminacy for $S$ then it is so for the formally normal
operator $N$ constructed as in {\rm Theorem
 \ref{t2.3.1}}. In other words, the property of a vector to be
that of ultradeterminacy can be lifted to the extending space;
this is a rough comment rather then a precise statements.

   The next two results can be viewed as a \underbar{global}
version of Theorem \ref{t3.4.1}; the latter to be though of as a
\underbar{local} one.

 \begin{thm} \tlabel{t6.10.1}
   The two following two conclusions hold.
   \begin{enumerate}
   \item[$(\alpha')$] If every $f\in\dz S$ is a vector of
determinacy of $S$ and for the (unique) family $(\mu_f)_{f\in\dz
S}$ of measures representing the complex moment bisequence
$(\is{S^f}{S^nf})_{m,n}$, $f\in\dz S$, one has
   \begin{equation} \label{1.29.3}
   \mu_{f+g}+\mu_{f-g}-2\mu_g\Ge0,\quad f,g\in\dz S,
   \end{equation}
then $S$ is subnormal and has a \underbar{unique} normal extension
which is minimal of spectral type, and {\em conversely}.

\noindent Therefore, a formally normal operator $N$ can be
constructed as in {\rm Theorem
 \ref{t2.3.1}} and it is subnormal as well.
    \item[$(\beta')$] If the set $\uuc(S)$ of vectors of
ultradeterminacy of $S$ is total in $\hhc$ then the formally
normal operator $N$ constructed as in {\rm Theorem
 \ref{t2.3.1}} is essentially normal.
   \end{enumerate}

   \end{thm}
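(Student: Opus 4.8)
The plan for $(\alpha')$ is to manufacture a family of elementary spectral measures and then invoke Theorem \ref{t2.30.12}. Since every $f\in\dz S$ is a vector of determinacy, the bisequence $(\is{S^mf}{S^nf})_{m,n}$ has a single representing measure, which I call $\mu_f$; this renders $f\mapsto\mu_f$ unambiguous on $\dz S$. The normalization $\mu_f(\ccb)=\|f\|^2$ is the $(0,0)$ moment, and the homogeneity $\mu_{\lambda f}=|\lambda|^2\mu_f$ of \eqref{3.17.1} is immediate: $|\lambda|^2\mu_f$ represents the bisequence of $\lambda f$, so determinacy forces the identification. The one genuinely non-formal point is the parallelogram law \eqref{4.17.1}, and this is exactly where \eqref{1.29.3} does its work. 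Fix $f,g\in\dz S$ and set $\rho\okr\mu_{f+g}+\mu_{f-g}-2\mu_g$; by \eqref{1.29.3} this is a \emph{positive} measure. Expanding $\is{S^m(f+g)}{S^n(f+g)}$ and $\is{S^m(f-g)}{S^n(f-g)}$ and cancelling the cross terms $\pm(\is{S^mf}{S^ng}+\is{S^mg}{S^nf})$, one finds that the complex moments of $\rho$ equal $2\is{S^mf}{S^nf}$; that is, $\rho$ represents the bisequence attached to $\sqrt2\,f\in\dz S$. Since $\sqrt2\,f$ is again a vector of determinacy and $\mu_{\sqrt2 f}=2\mu_f$, the two positive measures $\rho$ and $2\mu_f$ carry the same moments and therefore coincide, which is precisely \eqref{4.17.1}. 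The point worth stressing is that without the sign hypothesis \eqref{1.29.3} the difference $\mu_{f+g}+\mu_{f-g}-2\mu_f-2\mu_g$ is merely a signed measure with vanishing moments, which need not vanish; \eqref{1.29.3} converts the claim into an equality of positive measures, to which determinacy applies.

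With \eqref{3.17.1} and \eqref{4.17.1} secured, for each Borel $\sigma$ the map $f\mapsto\mu_f(\sigma)$ is a positive quadratic form on $\dz S$ bounded by $\|f\|^2$, so polarization furnishes a positive operator $F(\sigma)$ with $0\Le F(\sigma)\Le I$ and $\is{F(\sigma)f}{f}=\mu_f(\sigma)$, extended to $\hhc$ by density; countable additivity descends from the scalar measures, whence $F$ is a semispectral measure and $\is{S^mf}{S^nf}=\int_\ccb z^m\sbar z^n\is{F(\D z)f}{f}$, the off-diagonal form of \eqref{6.28.12} following by polarization of the sesquilinear forms. Theorem \ref{2x} then yields subnormality of $S$, and setting $\mu_f\okr\is{F(\,\cdot\,)f}{f}$ for all $f\in\hhc$ delivers the elementary spectral measures. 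For minimality and uniqueness I would observe that $F$ is uniquely determined, its diagonal being the determinate $\mu_f$; its minimal Naimark dilation produces a normal extension minimal of spectral type, and as minimal dilations are unique up to $\hhc$-equivalence, uniqueness of $F$ forces uniqueness of that extension. The converse is routine: for the spectral measure $E$ of any normal extension one has $\mu_f=\is{E(\,\cdot\,)f}{f}$, uniqueness of the minimal extension of spectral type makes $F$ unique and hence every $f$ a vector of determinacy, while \eqref{1.29.3} holds automatically since $\mu_{f+g}+\mu_{f-g}-2\mu_g=2\is{E(\,\cdot\,)f}{f}\Ge0$. Finally, once $S$ is subnormal it satisfies {\rm(}{\tt PD}{\rm)} by Corollary \ref{t1.4.1}, so Theorem \ref{t2.3.1} applies, and the resulting $N$, being the restriction of a normal operator to an invariant subspace, is itself subnormal (cf.\ the last clause of Corollary \ref{t3.7.1}).

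For $(\beta')$ the target is essential normality of the formally normal $N$ of Theorem \ref{t2.3.1}, and the natural route is to globalize the cyclic statement Theorem \ref{t3.4.1}$(\beta)$. Fix $f\in\uuc(S)$ and consider the cyclic portion $S_f=S\res{\ddc_f}$, where $\ddc_f=\zb{p(S)f}{p\in\ccb[Z]}$; it is cyclic with cyclic vector $f$ of ultradeterminacy, so by Theorem \ref{t3.4.1}$(\beta)$ the formally normal operator attached to $S_f$ is essentially normal, its closure being $M_Z$ on $\llc^2(\mu_f)$. Using \eqref{1.6.1} and \eqref{2.7.1}, the closed subspace $\kkc_f\okr\clolin\zb{N\gw{}^kN^lf}{k,l=0,1,\ldots}$ reduces $N$, and $N\rres{\kkc_f}$ is $\hhc_f$-equivalent to that cyclic model; hence $\sbar N\rres{\kkc_f}$ is normal, and inside $\kkc_f$ the monomials $N\gw{}^kN^lf$ form a core of this normal part — precisely the content of ultradeterminacy of $\mu_f$ read through Fact \ref{t2.27.12}.

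It remains to glue these local pieces, and this is the main obstacle. From $\uuc(S)$ being total in $\hhc$ I would aim to show that the span $\lin\zb{N\gw{}^kN^lf}{f\in\uuc(S),\,k,l=0,1,\ldots}\subset\dz N$ is a core of $N\gw$; since this span lies in $\dz N\subset\dz{N\gw}$, the larger $\dz N$ is then a core of $N\gw$ as well, and the criterion recorded after \eqref{1.18.1} (equivalently Corollary \ref{t1x.8.1} through \eqref{3.26.11}) makes $N$ essentially normal. The difficulty is that the reducing subspaces $\kkc_f$ are neither orthogonal nor independent as $f$ ranges over $\uuc(S)$, so one cannot simply assemble an orthogonal sum of the normal parts. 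The argument must be conducted at the level of cores of $N\gw$: one exploits that within each $\kkc_f$ the monomials are a core of a genuinely normal operator (ultradeterminacy) and that totality of $\uuc(S)$ propagates the vanishing in the criterion of Corollary \ref{t1x.8.1} from the individual $\kkc_f$ to all of $\kkc$. Establishing that propagation — namely that a vector $x\in\dz{N\gw}$ satisfying $\is xy+\is{x}{N\gw Ny}=0$ against every $y=N\gw{}^kN^lf$ with $f\in\uuc(S)$ must vanish — is the crux, and it is here that the totality hypothesis is indispensable.
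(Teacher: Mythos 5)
Your treatment of $(\alpha')$ is correct and is essentially the paper's own argument. The paper likewise notes that determinacy makes the family $(\mu_f)_f$ well defined and that \eqref{2.17.1} and \eqref{3.17.1} come for free, then computes that $\frac 12(\mu_{f+g}+\mu_{f-g})-\mu_f$ has the same complex moments as $\mu_g$ and lets \eqref{1.29.3} together with determinacy turn this equality of moments into the equality of positive measures \eqref{4.17.1} (the paper applies determinacy at $g$, you at $f$ --- the identical step); subnormality then follows from Theorem \ref{t2.30.12} and the statement about $N$ from Corollary \ref{t3.18.1}, exactly as you indicate. Your paragraph constructing the semispectral measure $F$ by polarization is just an unpacking of Theorem \ref{t2.30.12}/Theorem \ref{2x}. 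You correctly isolate the role of \eqref{1.29.3}: without it one only gets a signed measure with vanishing moments. One caveat: your sketch of the converse is thinner than the rest, since determinacy of $(\is{S^mf}{S^nf})_{m,n}$ concerns \emph{all} positive representing measures, not only those of the form $\is{E(\,\cdot\,)f}{f}$ coming from spectral measures of normal extensions; the paper itself offers no proof of the converse, deferring in effect to \cite{nag_ext}.

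For $(\beta')$ there is a genuine gap. You reproduce the paper's set-up --- the cyclic portions $N_f=N\rres{\overline{\ddc_f(N)}}$ for $f\in\uuc(S)$, their essential normality via Theorem \ref{t3.4.1}$(\beta)$, and the fact that $\overline{\ddc_f(N)}$ reduces $N$ --- but you stop exactly where the proof has to be finished: you announce that the vanishing criterion of Corollary \ref{t1x.8.1} must be ``propagated'' from the individual reducing subspaces to all of $\kkc$, call this the crux, and do not carry it out. As written, your text is a correct reduction of $(\beta')$ to an unproved gluing lemma rather than a proof. The paper closes this step by invoking Lemma 2 and the Theorem of \cite{part} (the ``normal part'' of an unbounded operator), which in effect show that the largest reducing subspace of $\sbar N$ on which the restriction is normal contains every $\overline{\ddc_f(N)}$ with $f\in\uuc(S)$, and totality of $\uuc(S)$ then forces this normal part to exhaust $\kkc$. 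A self-contained version of the missing step would run: for $x\in\dz{N\gw}$ with $\is xy+\is x{N\gw Ny}=0$ for all $y\in\ddc_f(N)$, reduction gives $\is{P_fx}{y}+\is{P_fx}{N_f\gw N_fy}=0$, and the fact that $\ddc_f(N)$ is a core of the normal operator $\overline{N_f}$ (this is where ultradeterminacy of $f$ enters, via \eqref{3.26.11}) yields $P_fx=0$; totality of $\uuc(S)$ then kills $x$, but only after one checks that the $\overline{\ddc_f(N)}$, $f\in\uuc(S)$, together span $\kkc$. None of this appears in your proposal, and it is precisely where the hypothesis on $\uuc(S)$ is consumed.
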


   \begin{proof}
  Proof of $(\alpha')$. It is clear there a unique family of
measures $\mu_f$, $f\in\dz S$ such that \eqref{2.17.1} and
\eqref{3.17.1} holds. The only condition missing so far to end up
with the conclusion is \eqref{4.17.1}.

   Take $f,g\in\dz S$ and with $m,n=0,1,\dots$ write
   \begin{multline*}
   \int_\ccb z^m\sbar z^n \D\big(\ulamek
12(\mu_{f+g}-\mu_{f-g})-\mu_f\big)=\ulamek 12\is{S^m(f+g)}{f+g}
+\ulamek 12\is{S^m(f-g)}{f-g}\\ -\is{S^mf}{f} =\is{S^mg}{g} =
\int_\ccb z^m\sbar z^n \D\mu_g.
   \end{multline*}
   Now \eqref{1.29.3} and determinacy at $g$ makes \eqref{4.17.1}
hold. Therefore, $S$ is subnormal due to Theorem \ref{t2.30.12}.
Corollary \ref{t3.18.1} establishes the final conclusion in
$(\alpha')$ concerning $N$.

   Proof of $(\beta')$. Let $N$ be the formally normal operator
constructed as Theorem \ref{t2.3.1}. Set
   \begin{equation}\label{2.29.3}
   \ddc_f(N)\okr\lin\zb{p(N\gw,N)f}{p\in\ccb[Z,\sbar Z]},\quad
N_f\okr N\rres{\overline{\ddc_e(N)}}
   \end{equation}
   and denote by $P_f$ the orthogonal projection on
$\overline{\ddc_e(N)}$. Notice that for $f\in\uuc(S)$ the operator
$\sbar N$ is normal. According to Lemma 2 of \cite{part} the
subspace $\hhc_f$ reduces $N$. Because $\uuc(S)$ is total,
   \begin{equation*}
\lin\zb{\ddc_f}{f\in\uuc(S)}=\dz N.
   \end{equation*}
   Adapting arguments used in the proof of Theorem of \cite{part}
we can check that $N$ is essentially normal.
   \end{proof}

   \subsubsection*{{{Minimality and uniqueness again}}}
   Now is a right time to come back the minimality problem of
extensions of $S$. An extension $N$ of $S$ {\em minimal of cyclic
type} if $\ccc_{\dz S}$ is a core of $N$; this definition works
regardless what class of operators the extensions belong to, the
only requirement is \eqref{5.28.12} to have sense. Theorem
\ref{t2.3.1} provides us with formally normal extensions of cyclic
type of an operator satisfying ({\tt PD}).

Minimal \underbar{normal} extensions of cyclic type may
\underbar{not} exist -- see \cite{try3} and \cite{deter}, the
latter is for further, much broader development of the former; an
example of another type is in \cite{expl}. In this matter quote
Theorem 3 and Corollary 3, both in \cite{try3}, in one.
   \begin{thm} \tlabel{t2.18.1}
   Let S be a subnormal operator. Suppose that it \underbar{has}
at least one minimal normal extension of cyclic type. Then an
arbitrary normal extension of S is minimal of \underbar{spectral}
type if and only if it is minimal of \underbar{cyclic} type.

  If S has at least one minimal normal extension of cyclic type,
then all its minimal normal extensions of spectral type $($hence
those of cyclic type too$)$ are $\hhc$--equivalent.
   \end{thm}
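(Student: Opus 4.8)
The plan is to prove the two statements in Theorem \ref{t2.18.1} using the machinery of minimality of cyclic versus spectral type, with the key technical engine being Theorem \ref{t2.6.1} (the uniqueness of the formally normal extension of cyclic type) together with Corollary \ref{t3.18.1}. Throughout, $S$ is subnormal and satisfies ({\tt PD}) by Corollary \ref{t1.4.1}, so Theorem \ref{t2.3.1} furnishes a canonical formally normal extension $N_0$ in some $\kkc_0$ with $\dz{N_0}=\lin\zb{N_0\gw{}^kf}{k=0,1,\ldots,\,f\in\dz S}$ satisfying \eqref{1.6.1}, and $N_0$ is precisely minimal of cyclic type because $\ccc_{\dz S}\subset\dz{N_0}$ is a core. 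The hypothesis that $S$ \emph{has} at least one minimal normal extension of cyclic type, say $\widetilde N$ in $\widetilde\kkc$, will be used to pin down $N_0$ relative to $\widetilde N$.

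For the first assertion, I would argue both implications by comparing the cyclic subspace $\overline{\ccc_{\dz S}}$ with the spectral subspace $\ssc_{\dz S}$ inside an arbitrary normal extension $M$. First I would observe that, since $\ccc_{\dz S}=\lin\zb{M\gw{}^mM^nf}{f\in\dz S,\,m,n\Ge0}$ is built from spectral-measure–affiliated vectors, one always has $\overline{\ccc_{\dz S}}\subset\ssc_{\dz S}$; this is immediate from \eqref{4.26.11}, which shows each $E(\sigma)$ commutes appropriately with the polynomials in $M,M\gw$. Hence minimal of cyclic type (i.e.\ $\overline{\ccc_{\dz S}}=\kkc$) always forces minimal of spectral type ($\ssc_{\dz S}=\kkc$); that direction needs no extra hypothesis. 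The substance is the converse: under the standing assumption that a cyclic-type minimal normal extension exists, I would use Corollary \ref{t3.18.1} to realize the canonical $N_0$ \emph{inside} any given normal extension $M$ as $N_1=M\rres\ddc$ with $\ddc=\lin\zb{M\gw{}^nf}{n\Ge0,\,f\in\dz S}$, and then show that on the cyclic-type extension the spectral subspace and the cyclic subspace coincide, so that spectral minimality propagates back to cyclic minimality. The mechanism is that $\overline{\ddc}=\overline{\ccc_{\dz S}}$ reduces $M$ and carries a normal part containing $S$; by Proposition \ref{t1.1.1} applied to that normal part, if $M$ is spectral-minimal the reducing piece $\overline{\ccc_{\dz S}}$ must be all of $\kkc$, giving cyclic minimality.

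For the second assertion, the $\hhc$-equivalence of all minimal normal extensions of spectral type (equivalently of cyclic type, by the first part), I would proceed as follows. Given two minimal normal extensions $M_1$ in $\kkc_1$ and $M_2$ in $\kkc_2$, both are now known to be minimal of cyclic type. Using Corollary \ref{t3.18.1} in each, I extract inside each $M_i$ the formally normal operator $N_i=M_i\rres{\ddc_i}$ on $\overline{\ddc_i}$, where $\ddc_i=\lin\zb{M_i\gw{}^nf}{n\Ge0,\,f\in\dz S}$; by construction each $N_i$ satisfies the conclusion of Theorem \ref{t2.3.1}, in particular \eqref{1.6.1}. Theorem \ref{t2.6.1} then yields a unitary $U\colon\overline{\ddc_1}\to\overline{\ddc_2}$ with $U\rres\hhc=I_\hhc$ and $UN_1=N_2U$. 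The final step is to promote this to a unitary of the full spaces $\kkc_1,\kkc_2$ intertwining $M_1,M_2$: since cyclic minimality means $\overline{\ddc_i}=\kkc_i$, we already have $U$ defined on all of $\kkc_1$ onto $\kkc_2$; and because $U$ intertwines the formally normal $N_i$ whose closures are $M_i$ (the $M_i$ being normal extensions with $\dz{N_i}$ a core), a closure/graph argument upgrades $UN_1=N_2U$ to $UM_1=M_2U$.

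The main obstacle I anticipate is the converse direction of the first assertion — showing spectral minimality forces cyclic minimality — which is exactly where the existence hypothesis is indispensable; without some cyclic-type extension in hand, the inclusion $\overline{\ccc_{\dz S}}\subset\ssc_{\dz S}$ can be strict, and the two notions genuinely diverge (as the examples in \cite{try3} and \cite{deter} show). The delicate point is to verify that the reducing subspace $\overline{\ccc_{\dz S}}$ actually supports a normal operator extending $S$, so that Proposition \ref{t1.1.1}'s rigidity can be invoked; this is precisely what the assumed cyclic-type minimal normal extension guarantees, and transporting that guarantee across an \emph{arbitrary} spectral-minimal extension $M$ via Corollary \ref{t3.18.1} is the technical heart of the argument.
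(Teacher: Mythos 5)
You should first note that the paper itself offers no proof of Theorem \ref{t2.18.1}: it is quoted verbatim as Theorem 3 and Corollary 3 of \cite{try3}, so your proposal can only be judged on its own terms. On those terms, two of your three steps are in order: the implication ``minimal of cyclic type $\Rightarrow$ minimal of spectral type'' (a core is in particular dense, and $\overline{\ccc_{\dz S}}\subset\ssc_{\dz S}$ always holds because $\ssc_{\dz S}$ reduces the extension and contains $\dz S$), and the deduction of the $\hhc$--equivalence statement from the first assertion via Theorem \ref{t2.6.1} followed by the closure argument $\overline{N_i}=M_i$ --- the latter being legitimate precisely because cyclic minimality makes $\ccc_{\dz S}$ a core of $M_i$.

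The genuine gap is in the converse direction of the first assertion, and it sits exactly where the theorem lives. You conclude ``cyclic minimality'' from ``$\overline{\ccc_{\dz S}}$ must be all of $\kkc$''. But minimality of cyclic type is defined as $\ccc_{\dz S}$ being a \emph{core} of $M$, i.e.\ dense in the graph norm, not merely dense in $\kkc$; the reason cyclic-type minimal normal extensions can fail to exist (while spectral-type ones always do) is precisely that the core property can fail even when $\ccc_{\dz S}$ is dense. Your argument never uses the standing hypothesis to establish the core property inside the arbitrary extension $M$. What is needed is to transport it from the assumed cyclic-type extension $\widetilde N$: by Corollary \ref{t3.18.1} and Theorem \ref{t2.6.1}, $M\res{\ccc_{\dz S}}$ is $\hhc$--equivalent to $\widetilde N\res{\ccc_{\dz S}}$, whose closure is the normal operator $\widetilde N$; hence $\overline{M\res{\ccc_{\dz S}}}$ is itself normal, and only then can spectral minimality, via (M${}_1$) together with the maximality of normal operators among formally normal ones (Proposition \ref{t1.1.1}), force $\overline{M\res{\ccc_{\dz S}}}=M$. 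A secondary but real defect: you assert that $\overline{\ccc_{\dz S}}$ reduces an arbitrary normal extension $M$ as if it were immediate. For unbounded $M$ it is not --- invariance under $M$ and $M\gw$ does not yield reduction without a domain condition, and $E(\sigma)$ cannot be approximated by polynomials in $M,M\gw$ as in the bounded case --- so reduction, too, has to be extracted from the transported normality of $\overline{M\res{\ccc_{\dz S}}}$ rather than assumed at the outset.
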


This settles the question of uniqueness. Let us say carefully that
a subnormal operator $S$ has the {\em uniqueness extension
property} if the circumstances of Theorem \ref{t2.18.1}
happen\,\footnote{\;Uniqueness extension property (and
subnormality itself) has been characterized in \cite{nag_ext},
Theorems 4, 4',5 and 5'. When specialized to the complex moment
problem it matters ultradeterminacy resembling the
characterization of Hamburger of determinacy in the real case, cf.
\cite{sh} p. 70.}.
   Part ($\beta'$) of Theorem \ref{t6.10.1} implies at once
   \begin{cor} \tlabel{t1.11.1}
   $S$ has the uniqueness extension property if its domain is
composed of vectors of ultradeterminacy.
   \end{cor}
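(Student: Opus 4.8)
The plan is to extract the statement directly from part $(\beta')$ of Theorem \ref{t6.10.1} and from Theorem \ref{t2.18.1}; the corollary is advertised as following ``at once'' precisely because almost all the work has been done there, and what remains is to check that the circumstances of Theorem \ref{t2.18.1} are in force. First I would unwind the hypothesis: ``$\dz S$ is composed of vectors of ultradeterminacy'' means $\dz S\subset\uuc(S)$, and since $S$ is densely defined, $\dz S$ and hence $\uuc(S)$ is total in $\hhc$. Part $(\beta')$ then applies and yields a formally normal operator $N$, constructed as in Theorem \ref{t2.3.1}, which is essentially normal. Its closure $\sbar N$ is therefore normal, and the chain $S\subset N\subset\sbar N$ exhibits $\sbar N$ as a normal extension of $S$; in particular $S$ is subnormal.

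The single point demanding a short verification is that $\sbar N$ is minimal of cyclic type, i.e.\ that $\ccc_{\dz S}$ is a core of $\sbar N$. Here I would use the explicit domain \eqref{1.6.1}, $\dz N=\lin\zb{N\gw{}^k f}{k=0,1,\dots,\,f\in\dz S}$, together with the standing invariance $S\dz S\subset\dz S$ and \eqref{2.7.1}. Since $N$ extends $S$ we have $N^n f=S^n f\in\dz S$ for $f\in\dz S$, so
\[
\ccc_{\dz S}=\lin\zb{N\gw{}^m N^n f}{f\in\dz S,\,m,n=0,1,\dots}=\lin\zb{N\gw{}^m g}{g\in\dz S,\,m=0,1,\dots}=\dz N .
\]
One should note that $\ccc_{\dz S}$ formed with the normal operator $\sbar N$ gives the same space: for $g\in\dz S\subset\dz{\sbar N}=\dz{(\sbar N)\gw}$ the adjoints agree, $(\sbar N)\gw g=N\gw g$, and \eqref{2.7.1} keeps all the intermediate vectors $N\gw{}^m N^n f$ inside $\dz N\subset\dz{(\sbar N)\gw}$, so the computation may be iterated. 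Thus $\ccc_{\dz S}=\dz N$, which is trivially a core of $\sbar N=\overline N$, and $\sbar N$ is a minimal normal extension of $S$ of cyclic type.

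Finally I would invoke Theorem \ref{t2.18.1}: having produced at least one minimal normal extension of cyclic type, its second conclusion forces all minimal normal extensions of $S$ of spectral type to be $\hhc$--equivalent, which is exactly the uniqueness extension property. I expect the only mild obstacle to be the bookkeeping identity $\ccc_{\dz S}=\dz N$ and the matching of the two adjoints on $\dz S$; every other step is an immediate citation of the results already established.
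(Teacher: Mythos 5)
Your proposal is correct and follows exactly the route the paper intends: the paper offers no written proof beyond the remark that part $(\beta')$ of Theorem \ref{t6.10.1} ``implies at once'' the corollary, and your argument is precisely the unwinding of that claim --- totality of $\uuc(S)\supset\dz S$, essential normality of the Theorem \ref{t2.3.1} extension $N$, the identification $\ccc_{\dz S}=\dz N$ via \eqref{1.6.1} and the invariance $S\dz S\subset\dz S$, and finally Theorem \ref{t2.18.1} to match the definition of the uniqueness extension property. You have merely supplied the bookkeeping the author leaves implicit, and it checks out.
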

   \begin{rem} \tlabel{t1.28.3}
   It is clear that for a cyclic subnormal operator $S$ with a
   cyclic vector $e$ the following statements are equivalent:
   \begin{enumerate}
   \item[${\boldsymbol{\cdot}}$] $e$ is a vector of
   ultradeterminacy of $S$, \item[{${\boldsymbol{\cdot}}$}] the
   formally normal extension $N$ of $S$ constructed via Theorem
   \ref{t2a.3.1} is essentially normal, hence it is minimal of
   cyclic type.
   \end{enumerate}
   The other way around, it seems to be worthy to realize how
minimality of cyclic type can be inherited by cyclic subspaces.
More precisely, If $N$ is a minimal normal extension of $S$ acting
in $\kkc$ then for $e\in\dz S$ the closure $\overline{\ddc_e(N)}$
of ${\ddc_e(N)}$ defined in \eqref{2.29.3} reduces $N$ (indeed,
because $\overline{\ddc_e(N)}$ is invariant for both $N$ and $N^*$
we get it, cf. footnote \ref{t}). Therefore, $N_e$ is a minimal
normal extension of $S_e$ of cyclic type and, consequently, $e$ is
a vector of ultradeterminacy of $S$.
   \end{rem}

    \subsubsection*{Subnormality trough $\ccc^\infty$--vectors}
    The following can be regarded as what corresponds to Halmos'
characterization of subnormality. Notice that his boundedness
condition \eqref{1.3.1} takes now a more subtle form of a growth
condition.
    \begin{thm} \tlabel{t9.3.1}
   If $S$ satisfies {\rm(}{\tt PD}{\rm)} and $\dz
S\in\{\bbc(S),\aac(S),\lin\qqc(S)\}$ then it is subnormal and has
the uniqueness extension property.
   \end{thm}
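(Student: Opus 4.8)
The plan is to combine the positive-definiteness hypothesis with the growth condition encoded in the assumption $\dz S\in\{\bbc(S),\aac(S),\lin\qqc(S)\}$, and to route everything through the formally normal extension furnished by Theorem \ref{t2.3.1}. Since $S$ satisfies ({\tt PD}), Theorem \ref{t2.3.1} hands us a Hilbert space $\kkc\supset\hhc$ and a formally normal $N$ in $\kkc$ with $S\subset N$, with $N\dz N\subset\dz N$, $N\gw\dz N\subset\dz N$, and with the distinguished domain \eqref{1.6.1}, namely $\dz N=\lin\zb{N\gw{}^kf}{k=0,1,\ldots,\;f\in\dz S}$. The whole game is then to upgrade this $N$ from \emph{formally} normal to \emph{essentially} normal; once $\sbar N$ is normal, $S\subset\sbar N$ exhibits $S$ as subnormal, and the uniqueness extension property will follow from the cyclic/spectral machinery.

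First I would transfer the $\ccc^\infty$-vector hypothesis from $S$ to $N$. The key point is that bounded (resp.\ analytic, quasianalytic) vectors are detected by the growth of $\|N^n f\|$ and $\|N\gw{}^n f\|$, and for $f\in\dz S$ formal normality gives $\|N f\|=\|N\gw f\|$; iterating on the polynomial domain \eqref{1.6.1} lets one control mixed words $N_1\cdots N_n f$ with $N_i\in\{N,N\gw\}$ in terms of the original data $\is{S^m f}{S^n f}$, i.e.\ in terms of $\|S^n f\|$-type quantities. Thus if every $f\in\dz S$ lies in $\bbc(S)$ (resp.\ $\aac(S)$, $\qqc(S)$), the generators $N\gw{}^k f$ of $\dz N$ should again be bounded (resp.\ analytic, quasianalytic) vectors for $N$, so that $\dz N$ is spanned by such vectors for $N$ as well. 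Here I would lean on the relation $\ddc^\infty(N\gw,N)=\ddc^\infty(N,N\gw)$ and the stability of these classes under applying polynomials $p(N\gw,N)$, both recorded in the $\ccc^\infty$-vectors subsection.

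Next I would invoke the essential-normality criterion available for the operator of Theorem \ref{t2.3.1}, namely Corollary \ref{t1x.8.1}: $N$ is essentially normal iff $x\in\dz{N\gw}$ with $\is xy+\is{x}{N\gw Ny}=0$ for all $y\in\dz N$ forces $x=0$. The presence of a total (indeed spanning) set of quasianalytic — a fortiori analytic or bounded — vectors for $N$ is exactly the kind of hypothesis that makes $\dz N$ a core of $N\gw$ and drives such a uniqueness-of-zero argument; this is the same mechanism that underlies Proposition \ref{t2.26.11} and its Corollary \ref{t3.3.1}, which assert that $\bbc$, $\aac$ and $\lin\qqc$ each form a core of a \emph{normal} operator. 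The quasianalytic case is the delicate one: boundedness and analyticity of vectors propagate cleanly, but quasianalyticity only controls things after passing to $\lin\qqc(N)$ and typically requires a Carleman/quasianalytic-class uniqueness theorem rather than a bare estimate.

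The main obstacle I expect is precisely this upgrade from formal to essential normality in the quasianalytic regime, where no simple majorization of $\|N^n f\|$ is available and one must show that the symmetric operator-theoretic data cannot support two distinct spectral measures. Once essential normality of $N$ is secured, $\sbar N$ is a normal extension of $S$, so $S$ is subnormal; and because $N$ was built from the cyclic/polynomial domain \eqref{1.6.1}, $\sbar N$ is automatically minimal of cyclic type, whence Theorem \ref{t2.18.1} yields $\hhc$-equivalence of all minimal normal extensions of spectral type, i.e.\ the uniqueness extension property. I would also record that the total-vectors argument matches the global statement $(\beta')$ of Theorem \ref{t6.10.1}, so the uniqueness part can alternatively be read off from Corollary \ref{t1.11.1}.
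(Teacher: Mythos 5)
Your overall architecture is the right one and matches the paper's skeleton: Theorem \ref{t2.3.1} supplies the formally normal $N$ with the polynomial domain \eqref{1.6.1}, and once $\sbar N$ is known to be normal, minimality of cyclic type together with Theorem \ref{t2.18.1} (equivalently, part $(\beta')$ of Theorem \ref{t6.10.1} with Corollary \ref{t1.11.1}) yields the uniqueness extension property. Your transfer of the growth hypothesis from $S$ to $N$ is also sound: formal normality plus the invariance \eqref{2.7.1} gives $\is{Nx}{Ny}=\is{N\gw x}{N\gw y}$ on $\dz N$, hence $N\gw Nx=NN\gw x$ there, and so $\|N\gw{}^kN^nf\|=\|S^{k+n}f\|$ for $f\in\dz S$, which propagates boundedness and analyticity (quasianalyticity on $\lin\qqc(S)$ needs the log-convexity $\|S^nf\|^2\Le\|S^{n+1}f\|\,\|S^{n-1}f\|$ to shift the Carleman condition, but this is available under ({\tt PD})). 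The genuine gap is exactly the step you flag and then leave open: upgrading the \emph{formally} normal $N$ to an \emph{essentially} normal one. Corollary \ref{t1x.8.1} is merely a restatement (a core criterion) and gives no traction by itself, and the ``mechanism underlying Proposition \ref{t2.26.11}'' is unavailable to you: the proof of that proposition manipulates the spectral measure $E$ of an operator already known to be normal, which is precisely what $N$ lacks. That no soft argument can close this is shown by Coddington's example \cite{cod} of a formally normal operator admitting no normal extension; a dense, even spanning, supply of $\ccc^\infty$--vectors must therefore be exploited through a hard determinacy theorem, and nothing in your text explains why quasianalyticity forces $x=0$ in Corollary \ref{t1x.8.1}.

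The paper itself does not reprove this step: it defers to \cite{try1}, \cite{try2} and \cite{book_sub}, and states that those proofs consist in showing that every $f\in\dz S$ is a vector of \emph{ultradeterminacy} of $S$, after which Corollary \ref{t1.11.1} concludes. Concretely, one must establish that each complex moment bisequence $(\is{S^mf}{S^nf})_{m,n}$ is representable and ultradeterminate --- easy for bounded vectors (compact support), and in the analytic and quasianalytic cases resting on Carleman-type uniqueness results in the spirit of Nelson \cite{nel} and Nussbaum \cite{nuss}, applied through the Cartesian decomposition of $N$, where the commuting symmetric operators $\RE N$ and $\IM N$ must be shown essentially selfadjoint with spectrally commuting closures. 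So your closing remark that the uniqueness ``can alternatively be read off from Corollary \ref{t1.11.1}'' inverts the logic: ultradeterminacy of the vectors in $\dz S$ is not an alternative reading but the substantive content that your proposal is missing; to complete it you must either import one of these determinacy theorems or reprove it.
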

   We refer to \cite{try1}, \cite{try2} and also to
\cite{book_sub} for proofs. They consist in showing the vectors in
$\dz S$ are in fact vectors of ultradeterminacy of $S$, cf.
Corollary \ref{t1.11.1}.

This result is a sort of standard if one restricts an interest to
essential selfadjointness of symmetric operators.

    \subsubsection*{Complete characterization of subnormality by
positive definiteness} What differs Theorem \ref{t9.3.1} from
Bram's result is some oversupply, the presence of an additional
conclusion. The characterization we give below does not have this
defect. In \cite{polar} one can find actually two kinds of
characterizations: the first makes use of extending the positive
definiteness condition ({\tt PD}) so as to get a spatial
extension, the second is just a test, rather complicated to state
it in this paper. Here we describe the first approach. Instead of
stating it formally we explain the idea behind the result by a
sequence of three drawings, they refer to the cyclic case when
({\tt PD}) can be though of as ({\tt MPD}). Only Picture 2 needs
some comment. It refers to the situation of positive definiteness
({\tt MPD}) defined on $\zzb\times\zzb$. In this case we get a
solution and the extra conclusion that the measure involved does
not have $0$ in its support.

    \begin{center}
   \setlength{\unitlength}{1.5pt}
   \begin{picture}(100,90)(-45,-32) 
   \thinlines
   \put(-30,7){\vector(1,0){70}} 
   \put(5,-25){\vector(0,0){70}} 
   \put(1,1){$\scriptstyle{0}$}
    \put(40,0){$\scriptstyle{m}$} \put(-0.5,40){$\scriptstyle{n}$}

     \multiput(5,31)(6,0){5}{\circle*{1}}
   \multiput(5,25)(6,0){5}{\circle*{1}}
   \multiput(5,19)(6,0){5}{\circle*{1}}
   \multiput(5,13)(6,0){5}{\circle*{1}}
   \multiput(5,7)(6,0){5}{\circle*{1}}
   \end{picture}
    \\[1.5ex]
   {\small Figure 1.\ Halmos positive definiteness: {\em too
little}}

   \end{center}
     \vspace{0.3cm}

   \begin{center}
   %
   \setlength{\unitlength}{1.5pt}
   \begin{picture}(100,90)(-45,-32)
   \thicklines \thinlines
   \put(-30,7){\vector(1,0){70}} 
   \put(5,-25){\vector(0,0){70}} 
   \put(1,1){$\scriptstyle{0}$}
    \put(40,0){$\scriptstyle{m}$} \put(-0.5,40){$\scriptstyle{n}$}
\multiput(-19,31)(6,0){9}{\circle*{1}}
\multiput(-19,25)(6,0){9}{\circle*{1}}
\multiput(-19,19)(6,0){9}{\circle*{1}}
\multiput(-19,13)(6,0){9}{\circle*{1}}
   \multiput(-19,7)(6,0){9}{\circle*{1}}
\multiput(-19,1)(6,0){9}{\circle*{1}}
\multiput(-19,-5)(6,0){9}{\circle*{1}}
\multiput(-19,-11)(6,0){9}{\circle*{1}}
\multiput(-19,-17)(6,0){9}{\circle*{1}}

   \end{picture}
   \\[1.5ex]
   {\small Figure 2.\ Very extended positive definiteness: {\em
too much}}
   \end{center}
   \vspace{0.3cm}

   \setlength{\unitlength}{1.5pt}
   \begin{center}
   \begin{picture}(100,90)(-45,-32)
   \thicklines \thinlines
   \put(-30,7){\vector(1,0){70}} 
   \put(4.6,-25){\vector(0,0){70}} 
   {\put(-24.5,36){\line(1,-1){58}}} 
   \put(1,1){$\scriptstyle{0}$}
    \put(40,0){$\scriptstyle{m}$} \put(-0.5,40){$\scriptstyle{n}$}
\multiput(-19.4,31)(6,0){9}{\circle*{1}}
\multiput(-13.3,25)(6,0){8}{\circle*{1}}
\multiput(-7.3,19)(6,0){7}{\circle*{1}}
\multiput(-1.3,13)(6,0){6}{\circle*{1}}
   \multiput(4.6,7)(6,0){5}{\circle*{1}}
\multiput(10.4,1)(6,0){4}{\circle*{1}}
\multiput(16.5,-5)(6,0){3}{\circle*{1}}
\multiput(22.6,-11)(6,0){2}{\circle*{1}}
\multiput(28.8,-17)(6,0){1}{\circle*{1}}
   \end{picture}
    \\[1.5ex]
   {\small Figure 3.\ `Half plane' extended positive definiteness:
{\em that's it!}}
   \vspace{0.3cm}
    \end{center}

   \subsection*{The example}   The most spectacular example of the
theory is the creation operator of the quantum harmonic
oscillator; this was notified explicitly for the first time in
\cite{hol}. This operator has many faces. Before we describe them
here let us mention that from the abstract point of view they are
indistinguishable: precisely any of them is the weighted shift
with the weights $\sigma_n=\sqrt{n+1}$ in a particular Hilbert
space and with respect to particular orthonormal basis; the
adjoint acts as a backward shift according the usual rule. The
creation operator is not only so prominent example but also
belongs to the family of the best behaving subnormal operators.
Among the pleasant features of the creation operator $S$ we
mention:
   \begin{enumerate}
   \item[\liczp 1] $\dz {\sbar S}=\dz{S\gw}$, this rounds up
 \eqref{3.2.1};\item[\liczp 2] $S$ has `enough' analytic
 vectors;\item[\liczp 3] $S$ enjoys the uniqueness property;
 \item[\liczp 4] $S$ has a `full' analytic model; \item[\liczp 5]
 it is determined by its selfcommutator.\label{komut}
   \end{enumerate}

The collection of models we are going to present in brief below
shows on how many diverse and concrete ways this abstractly
defined operator can be realized, look also at \cite{how}.
   \subsubsection*{$\llc^2(\rrb)$ model}
   The oldest model of the quantum harmonic oscillator couple, the
creation and the annihilation operator, is
 $$S=\frac 1 {\sqrt 2}(x-\frac \D {\D x}),
\quad S^\times =\frac 1 {\sqrt 2}(x+\frac \D {\D x})$$ considered
in $\llc^2(\rrb)$ with $\dz S=\dz
{S^\times}=\lin(h_n)_{n=0}^\infty$ where $h_n$ is the $n$-th
Hermite function
$$h_n=2^{-n/2}(n!)^{-1/2}\pi^{-1/4}\E^{-x^2/2}H_n$$
with $H_n$, the $n$-Hermite polynomial, defined as
   \begin{equation} \label{1.12.1}
   H_n(x)=(-1)^n\E^{x^2}\frac {\D^{\,n}}{\D x^n}\E^{-x^2}\!.
   \end{equation}

   \subsubsection*{Analytic model: multiplication in the
Segal--Bargmann space}
   An analytic model of the quantum oscillator is in
   $\aac^2(\exp(-|z|^2\D x\!\D y)$, called the Bargmann-Segal
   space -- cf. \cite{segal, barg}, which is composed of all
   entire functions in $\llc^2 (\exp(-|z|^2\D x\!\D y))$ and is,
   in fact, a reproducing kernel Hilbert space with the kernel
   $(z,w)\mapsto\exp(z\bar w)$. The standard orthonormal basis
   $\{e_n\}_{n=0}^\infty$ in the space $\aac^2(\exp(-|z|^2)\D
   x\!\D y)$ is composed of monomials
$$e_n=\frac {z^n}{\sqrt{n!}},\quad z\in\ccb,\quad n=0,1,\dots $$
Set $\ddc_0=\lin(e_n)_{n=0}^\infty$. Then the operators $S$ and $S
^\times $ defined as $$Sf(z)=zf(z), z\in\ccb,\quad S^\times\!
f=\frac \D{\D z}f,\quad f\in\dz S=\dz {S ^\times }=\ddc_0$$ are
the creation and the annihilation operators.

Notice that $\llc^2(\exp(-|z|^2\D x\!\D y)$ is the natural
extension f $\aac^2(\exp(-|z|^2)\D x\!\D y)$ and the creation
operator $S$, which is the operator of multiplication by the
independent variable, extends to the operator which acts in the
same way in the larger space. Because the latter operator is
normal, the creation operator is a {subnormal} operator. The
annihilation operator, is the projection of the operator of
multiplication by $\sbar z$ in $\llc^2(\exp(-|z|^2\D x\!\D y)$ to
the Segal-Bargmann space.

   The unitary equivalence between $\llc^2(\rrb)$ and
$\aac^2(\exp(-|z|^2\D x\!\D y)$ and its inverse can be implemented
by integral transforms, called Bargmann transform, whose kernels
comes from the generating function of the Hermite polynomials, see
\cite{hall} for more details on this and for a little piece of
history. The Bargmann transform can be used also, via Corollary
\ref{t1.31.12}, to argue that the creation in $\llc^2(\rrb)$ is
subnormal, this is parallel to other arguments.
   \subsubsection*{Analytic model: not very classical}
   The Hermite polynomials, defined as in \eqref{1.12.1}, are now considered as
   those in a complex variable. Let $0<A<1$. Then
$$\int_{\rrb^2}H_m(x+\I                  y)H_n(x-\I
y)\exp\big[-(1-A)x^2-\big(\frac 1 A -1\big)y^2\big] \D x\!\D
y=b_n(A)\delta_{m,n}$$ where
$$b_n(A)=\frac    {\pi\sqrt    A}{1-A}\big(2\frac{1+A}{1-A}\big)^n
n!\,.$$ Introducing the Hilbert space $\xxc_A$ of entire functions
$f$ such that
$$\int_{\rrb^2}|f(x+\i  y)|^2\exp\big[Ax^2-\frac  1  Ay^2\big]\D
x\!\D y<\infty$$ and defining
$$h_n^A(z)=b_n(A)^{-1/2}\E^{-z^2/2}H_n(z),\quad z\in\ccb$$
it was shown in \cite{eind} that $\{h_n^A\}_{n=0}^\infty$ is an
orthonormal basis in $\xxc_A$. From the algebraic relation
$H_{n+1}=2zH_n-H_n'$ we get directly
$$\sqrt
{n+1}\,h_{n+1}^A=\sqrt{\frac{1-A}{2(1+A)}}\,[zh_n^A-(h_n^A)'].$$ %
Set $\ddc_A=\lin(h_n^A)_{n=0}^\infty$. The operators $S_A$ and
$S_A^\times$ defined as
   \begin{multline*}
    S_Af(z)=\sqrt{\frac{1-A}{2(1+A)}}\,[zf(z)-f'(z)],\quad
S_A^\times
f(z)=\sqrt{\frac{1+A}{2(1-A)}}\,[zf(z)+f'(z)],\\z\in\ccb,\quad
f\in\ddc_A
   \end{multline*}
are the creation and the annihilation operator in $\xxc_A$, cf.
\cite{anal}.

   It is interesting to notice that this model realizes a kind of
`homotopy' for the quantum harmonic oscillator between the
$\llc^2(\bbc)$ model and that in the space $\aac^2(\exp(-|z|^2)\D
x\!\D y)$ which are both achieved as $0<A<1$ tends to its two
extremities, cf. \cite{anal}.

   \subsubsection*{Discrete model}
    The Charlier polynomials $\{C_n\npo a\}_{n=0}^\infty$, $a>0$,
    are determined by
   $${\E}^{-az}(1+z)^x=\sum_{n=0}^\infty             C_n\npo
a(x)\frac{z^n}{n!}.
   $$
   They are orthogonal with respect to a nonnegative integer
   supported measure according to
   $$
   \sum_{x=0}^\infty C_m\npo a(x)C_n\npo a(x)
   \frac{{\E}^{-a}a^x}{x!}=\delta_{mn}a^nn!, \quad
   m,n=0,1,\dots\;.
   $$
   Define the Charlier functions (or, rather, the {\it Charlier
   sequences}) $c_n\npo a$, $ n=0,1,\dots$ in discrete variable
   $x$ as
   $$
   c_n\npo a(x)=a^{-\frac{n}{2}}(n!)^{-\frac{1}{2}}C_n\npo a(x){\E
   }^{-\frac{a}{2}} a^{\frac{x}{2}} {(x!)^{-\frac{1}{2}}},\quad
   \text{for $x\ge 0$}.
   $$
   As we know from \cite{yet} so defined Charlier sequences
   satisfy
   \begin{equation*}
   \sqrt{n+1} c_{n+1}\npo a(x)=\begin{cases} \sqrt {x} c_n\npo
   a(x-1)-\sqrt a c_n\npo a(x)\;& x\ge 1\\ -\sqrt a c_n\npo a (x)
   \; & x=0\end{cases}
   \end{equation*}
   Therefore, the operator ${S_a}$ defined as
   \begin{equation*}
   (S_a f)(x)\okr
    \begin{cases}
    \sqrt {x} f(x-1)-\sqrt a f(x)\;& x\ge 1\\
   -\sqrt af (x) \; & x=0
   \end{cases}
   \end{equation*}
   with domain $\dz{S_a}\okr\lin\zb{c_n\npo a}{n=0,1,\dots}$ is
the creation. The annihilation operator is defined again as a
finite difference operator
   \begin{equation*}
   (S_a^\times f)(x)= \sqrt{x+1}f(x+1)-\sqrt af(x),\quad
x=0,1,\dots\,
   \end{equation*}
   In \cite{yet} one can find an analog of Bargmann transform for
this model as well.

   \subsubsection*{Plays with the commutation relation}
Remark at \liczp 5, p. \pageref{komut}, has to be developed a
little bit more. It suggests the creation operator is in sense
exceptional. It is clear the creation operator $S$ and its formal
adjoint $S^\times$, the annihilation operator, satisfy the
canonical commutation relation of the quantum harmonic oscillator
   \begin{equation} \label{1.14.1}
   S^\times S-SS^\times=I.
   \end{equation}
   This relation has a rather formal appearance but after giving
it a proper meaning makes the way back possible, cf. \cite{ccr}.
Roughly, an operator $S$ in a separable Hilbert space is a
creation operator if (and only if) it satisfies \eqref{1.14.1}
properly understood, is subnormal and has the uniqueness extension
property.

   Another unprecedented feature the creation operator may be
proud of is that it is uniquely determined as the only operator
within the class of weighted shifts for which its translate(s) is
still there, cf. \cite{pec} and also \cite{charlier} where the
role of the discrete model in $\ell^2$ is fully explained.

   \subsection*{The question}
   It is clear that a `suboperator' of a subnormal operator is by
definition subnormal too. The problem is to what extend the
converse holds true. More specifically,

   \begin{equation}
   \text{if every $S_f$, for $f\in\dz S$, is subnormal, is so $S$?
} \tag{$\clubsuit$}
   \end{equation}
   It is so in ({$\clubsuit$}) for bounded operators, see
\cite{alan, trent}. In the unbounded case this is true if $\dz
S=\aac(S)$, the analytic vectors of $S$, see \cite{ss0}. Replacing
analytic vectors by vectors of determinacy, as in part ($\alpha'$)
of Theorem \ref{t6.10.1} leads to the positive answer provided
\eqref{1.29.3} holds; here extra conclusion appears. However,
condition \eqref{1.29.3} itself is sufficient for ($\clubsuit$) to
be true, see \cite{sesq} and Theorem 4 in \cite{cyk}. It is also
answered in positive when cyclic portions $S_f$ are replace by, so
to speak, $2$--cyclic ones, see \cite{polar} and \cite{cyk}. Our
believe is the problem is a kind of selection one, see \cite{cyk}
for more discussion in this matter. All this supports the
conjecture that it is `yes' at large. Who knows?

   \section*{The end}

   \subsection*{Missing topics}
As always happens when one wants to write a survey of moderate
length and the material is of considerable size the problem of
selection becomes unavoidable. This has happened here as well.
Among the topics which are absent we mention two.
  \subsubsection*{Lifting commutant} The only thing we can do
right now is to direct to \cite{sto_li}, \cite{maj} and
\cite{majsto} where further references can be found.

   \subsubsection*{Analytic models}  Analytic models for unbounded
operators are exhaustively presented in \cite{try3}; their
relation to subnormality is also there. Analytic models are
intimately associated with reproducing kernel Hilbert spaces, cf.
\cite{cue} and \cite{mult}. Let us mention that from this point of
view the question of subnormality can be roughly rephrased as the
problem of integrability of those space. More precisely, when a
reproducing kernel Hilbert space composed of analytic functions
can be isometrically imbedded in an $\llc^2$ space. It is clear
that the Dirichlet space is not such.

   \subsection*{Some final words} This is a  story of unbounded subnormality
as it has been more or less developed until now. This is also an
open invitation to take part in its continuation. {\em
Impressionism} as understood in painting\,\footnote{\; An excerpt
from \cite{imp}: `The style aims at defining a natural yet
visually innovative use of light and color.' } and music at the
turn of the 19th and 20th century does not happen too often in
mathematical writing. Let me keep an `impression' this is my
venture.

   \bibliographystyle{amsplain}
   
   \end{document}